\newcommand{\R}{\mathbb{R}}
\newcommand{\C}{\mathbb{C}}
\newcommand{\N}{\mathbb{N}}
\newcommand{\Z}{\mathbb{Z}}
\newcommand{\T}{\mathbb{T}}
\newcommand{\Pp}{\mathbb{P}}
\newcommand{\EE}{\mathbb{E}}
\newcommand{\Escr}{\mathscr{E}}
\theoremstyle{plain}
\newtheorem{theorem}{Theorem}[section]
\newtheorem{proposition}{Proposition}[section]
\newtheorem{lemma}[proposition]{Lemma}
\theoremstyle{definition}
\newtheorem{definition}{Definition}[section]
\newtheorem*{theorem*}{Theorem}
\theoremstyle{definition}
\newtheorem{remark}{Remark}[section]
\numberwithin{equation}{section}
\newcommand{\abs}[1]{\left| #1 \right|} 
\newcommand{\norm}[1]{\lVert#1\rVert} 
\newcommand{\bnorm}[1]{\Bigl\| #1\Bigr\|} 
\newcommand{\normtwo}[1]{
{\left\vert\kern-0.25ex\left\vert\kern-0.25ex\left\vert #1 
    \right\vert\kern-0.25ex\right\vert\kern-0.25ex\right\vert} }
\newcommand{\ep}{\epsilon} 
\newcommand{\la}{\lambda}
\newcommand{\ga}{\gamma}
\newcommand{\om}{\omega}
\newsavebox\myboxA
\newsavebox\myboxB
\newlength\mylenA
\newcommand*\xoverline[2][0.75]{%
    \sbox{\myboxA}{$\m@th#2$}%
    \setbox\myboxB\null
    \ht\myboxB=\ht\myboxA%
    \dp\myboxB=\dp\myboxA%
    \wd\myboxB=#1\wd\myboxA
    \sbox\myboxB{$\m@th\overline{\copy\myboxB}$}
    \setlength\mylenA{\the\wd\myboxA}
    \addtolength\mylenA{-\the\wd\myboxB}%
    \ifdim\wd\myboxB<\wd\myboxA%
       \rlap{\hskip 0.5\mylenA\usebox\myboxB}{\usebox\myboxA}%
    \else
        \hskip -0.5\mylenA\rlap{\usebox\myboxA}{\hskip 0.5\mylenA\usebox\myboxB}%
    \fi}
\newcommand{\nldt}{\xoverline{n}}
\newcommand{\cldt}{\xoverline{c}}
\newcommand{\ind}{\textbf{1}}
\newcommand{\B}{\mathscr{B}}
\newcommand{\id}{{\rm id}}
\newcommand\restr[2]{{
  \left.\kern-\nulldelimiterspace 
  #1 
  \vphantom{\big|} 
  \right|_{#2} 
  }}
\newcommand{\Hscr}{\mathscr{H}}
\newcommand{\Prob}{\mathrm{Prob}}
\newcommand{\supp}{\mathrm{supp}}
\newcommand{\Qop}{\mathcal{Q}}
\newcommand{\smu}{\nu}
\title[Statistical properties for mixing Markov chains]{Statistical properties for mixing Markov chains with applications to \\dynamical systems}
\date{}
\begin{document}

\author[A. Cai]{Ao Cai}
\address{School of Mathematical Sciences, Soochow University, Soochow 215006, China and Departamento de Matem\'atica, Pontif\'icia Universidade Cat\'olica do Rio de Janeiro (PUC-Rio), Brazil}
\email{godcaiao@gmail.com}

\author[P. Duarte]{Pedro Duarte}
\address{Departamento de Matem\'atica and CMAFcIO\\
Faculdade de Ci\^encias\\
Universidade de Lisboa\\
Portugal 
}
\email{pmduarte@fc.ul.pt}

\author[S. Klein]{Silvius Klein}
\address{Departamento de Matem\'atica, Pontif\'icia Universidade Cat\'olica do Rio de Janeiro (PUC-Rio), Brazil}
\email{silviusk@puc-rio.br}

\begin{abstract}   
We establish an abstract, effective, exponential large deviations type estimate for Markov systems satisfying a weaker form of mixing. We employ this result to derive such estimates, as well as a central limit theorem, for the skew product encoding a random torus translation, a model we call a mixed random-quasiperiodic dynamical system. This abstract scheme is applicable to many other types of skew product dynamics, including systems for which the spectral gap property for the transition or the transfer operator does not hold.  
\end{abstract}

\keywords{Large deviations estimates; mixing Markov chains; Markov operator; skew-product dynamical systems; random toral translations.}

\subjclass{37A30, 60F10, 60F05, 60J05}

\maketitle


\section{Introduction and statements}\label{intro}
Let $M$ be a Polish metric space. 
A (deterministic) dynamical system on $M$ is a continuous function $f \colon M \to M$ that encodes a law of  transitioning from a state $x \in M$ to the next state $f (x) \in M$. A Borel probability measure $\smu$ on $M$ is called $f$-invariant if for any Borel set $E \subset M$,
$$\smu (E) = \smu \left( f^{- 1} (E) \right) = \int_M \delta_{f (x)} (E) \, d \smu (x) \, .$$ 
In this case, the triplet $(M, f, \smu)$ is called a measure preserving dynamical system (MPDS). We also assume that this system is ergodic, that is, if a Borel set $E \subset M$ is $f$-invariant (meaning that $f^{-1} (E) = E$) then $\smu (E)$ is equal to $0$ or $1$.

Let $\Escr$ be a set of observables $\varphi \colon M \to \R$ which we generally assume to be a Banach subspace of $L^\infty (M)$, the space of  bounded, measurable functions on $M$. Given $\varphi \in \Escr$ and $n\in \N$, let
$$S_n \varphi := \varphi + \varphi \circ f + \cdots + \varphi \circ f^{n-1}$$
denote the corresponding $n$-th Birkhoff sum. By the pointwise ergodic theorem, the Birkhoff time averages converge a.e. to the space average: 
\begin{equation}\label{Birkhoff}
\frac{1}{n} \, S_n \varphi \to \int_M \varphi \, d \smu \quad \text{as} \quad n\to\infty, \quad \smu-\text{a.e.}
\end{equation}

This is the analogue of the law of large numbers in probabilities. We are interested in other types of statistical properties such as large deviations type (LDT) estimates or a central limit theorem (CLT) for certain types of dynamical systems. 

The $\smu$-almost everywhere convergence~\eqref{Birkhoff} implies the convergence in measure, that is, for all $\ep>0$,
$$ \smu \Big\{ x \in M \colon \Big| \frac{1}{n} \, S_n \, \varphi (x) - \int_M \, \varphi \, d \smu  \Big| > \epsilon \Big\}   \to 0 \quad \text{ as } \ n \to \infty  .$$
When the rate of convergence to zero of the exceptional set of states is explicit, for instance exponential, we say that the MPDS $(M, f, \smu)$ satisfies an LDT estimate on $\Escr$. There are different kinds of LDT estimates: asymptotic, i.e. in the spirit of the classical large deviations principle of Cram\'er, or finitary, in the spirit of Hoeffding's inequality. We are more interested in the latter, as it is more effective and it has applications to other topics in dynamics, see~\cite{DK-book}. 

Let $X_1, \ldots, X_n$ be independent random variables and denote by $S_n := X_1 + \cdots + X_n$ their sum. Hoeffding's inequality states that if $\abs{X_i} \le C$ a.s. for all $1\le i \le n$,  then for all $\ep>0$ we have
\begin{equation*}
  \Pp \Big\{\, \Big| \frac{1}{n}\, S_n - \EE \big(\frac{1}{n}\, S_n\big)  \Big| > \ep \,\Big\}   \le 2 \,  e^{- c (\ep) \, n} \, ,
\end{equation*}  
 where $c (\ep) = (2 C)^{-2}   \, \ep^2$.
   
Hoeffding-type inequalities are available for a wide class of non-uniformly hyperbolic dynamical systems and for a large space of observables, see for instance I. Melbourne, M. Nicol~\cite{Melbourne-Nicol}, I. Melbourne~\cite{Melbourne-PAMS} and J. F. Alves, J. M.  Freitas, S. Luzzatto, S. Vaienti~\cite{Alves-Freitas-L-V}. Moreover, these works provide general criteria for the existence of LDT estimates with various decaying rates depending on the availability of an appropriate decay of correlations, see~\cite[Theorem D and Theorem E]{Alves-Freitas-L-V}.

Furthermore, J.-R. Chazottes and S. Gou\"ezel~\cite{Chazottes-concentration} strengthened these types of result by establishing concentration inequalities for dynamical systems modeled by (various types of) Young towers.
Decay of correlations and a central limit theorem in these settings were previously obtained by L.S. Young~\cite{LSYoung-annals, LSYoung99}. There is a vast body of work on these topics, which we will not attempt to review, but we recommend to the interested reader the monographs~\cite{Baladi} and~\cite{Liverani}.

In another important and relevant monograph~\cite{HH} by H. Hennion and L. Herv\'e, the authors developed a general functional analytic method for establishing limit theorems (including an asymptotic large deviations principle and a central limit theorem) for  strongly mixing (in an appropriate sense) Markov chains. This strong mixing property implies the quasi-compactness (on an appropriate space of observables) of the Markov operator determined by the transition kernel of the Markov chain. The quasi-compactness is then inherited, using the perturbation theory of linear operators, by nearby elements of a one-parameter family of Laplace-Markov operators, which  is then used to prove the limit theorems for the chain.  This abstract scheme is employed to establish limit laws for products of invertible random matrices satisfying some generic conditions (as well as for other systems such as uniformly expanding maps).  
We would like to note that this monograph greatly influenced some of our previous works and motivated the present one.

\medskip

The first result of this paper is an abstract, effective, exponential LDT estimate for Markov chains satisfying a rather {\em weak} form of strong mixing. In particular, the quasi-compactness of the corresponding Markov operator may fail to hold (which will indeed be the case in certain interesting applications). Besides being more general, compared to~\cite{HH}, our method is much more straightforward and does not use the perturbation theory of linear operators.
We will subsequently apply this general result to some skew-product dynamical systems. 

\medskip

Let us briefly describe this abstract setting (see Section~\ref{aldts} for more details).
A stochastic dynamical system (SDS) on $M$ (also referred to as a Markov kernel) is a continuous function $K \colon M \to \Prob (M)$, where the set $\Prob (M)$ of probability measures is 
equipped with the weak topology (see~\cite[Section 2.1]{CDK-paper1} for precise definitions). 
For a point $x \in M$ and a Borel set $E\subset M$, $K_x (E)$ can be interpreted as the probability that the state $x$ transitions to some state in $E$. A measure $\smu \in \Prob (M)$ is called $K$-stationary if for any Borel set $E \subset M$,
$$\smu (E)  = \int_M K_x (E) \, d \smu (x) \, .$$ 
In this case, the triplet $(M, K, \smu)$ is called a Markov system.

\medskip

The Markov (or transition) operator $\Qop = \Qop_K$ associated to a Markov system $(M, K, \smu)$ is a priori defined on $L^\infty (M)$ by
$$\Qop \varphi (x) := \int_M \varphi (y) \, d K_x (y) \quad \forall x \in M .$$

Let  $\Escr$ be a $\Qop$-invariant Banach space of observables containing the constant function $\ind$ and such that $\Escr \hookrightarrow L^\infty (M)$ is a continuous embedding. 

\medskip

Finally, let $\{Z_n\}_{n\geq 0}$ be a $K$-Markov chain, that is, a Markov chain with values in $M$ and transition kernel $K$.  Given an observable $\varphi \colon M \to \R$ and an integer $n\in\N$, denote by  
$$
S_n \varphi :=   \varphi \circ Z_0 + \cdots + \varphi \circ Z_{n-1}
$$
the corresponding ``stochastic'' Birkhoff sums.\footnote{The notation $S_n \varphi$ will be used for both deterministic and stochastic Birkhoff sums. It will be clear from the context if we mean one or the other.}

\medskip

Evidently, any deterministic dynamical system $(M, f)$ is also stochastic, where the Markov kernel $K$ is just $K_x = \delta_{f(x)}$ for all $x \in M$, any $f$-invariant measure $\smu$ is $K$-stationary and the Koopman operator is the corresponding Markov operator. However, the Koopman operator is in general not strongly mixing, a key property usually needed to establish statistical properties of the system. We will then associate to a non invertible dynamical system $(M, f)$ a different SDS (consider for instance the transition kernel that assigns to any $x \in M$ its weighted pre-images via $f$) which will be strongly mixing in an appropriate sense.

\medskip

Mixing in general refers to the convergence
\begin{equation}\label{mixing gen}
K_x^n \to \smu \quad \text{as } n \to \infty
\end{equation}
of the $n$-th convolution power of the Markov kernel $K$ to the stationary measure $\smu$. 

This is equivalent to the convergence of the powers of the Markov operator to the linear functional determined by the stationary measure:
\begin{equation}\label{mixing gen2}
\Qop^n \varphi \to \int_M \varphi \, d\smu \quad \text{as } n \to \infty
\end{equation}
for all $\varphi$ in an appropriate space of observables. 

The convergence above can be understood in different ways, whence the different types of mixing. 

The strongest form of mixing, in general referred to as uniform ergodicity (see~\cite[Chapter 16]{Markov-chains-book}) assumes in~\eqref{mixing gen} the uniform convergence in $x$ with respect to the total variation norm. The rate of uniform convergence is then necessarily exponential. Moreover, it is equivalent to the exponential rate of convergence 
in~\eqref{mixing gen2} for any observable in $L^\infty (M)$. The Hoeffding inequality is already available in this context, see~\cite{Hoeffding-Markov}.  

A weaker type of mixing, which we refer to as spectral strong mixing on $(\Escr,  \norm{ \cdot }_\Escr)$ is defined as follows: for all $\varphi \in \Escr$ and $n \in \N$
$$ \norm{\Qop^n \varphi-\int_M \varphi\, d\smu}_\Escr \leq C \,\sigma^n\, \norm{\varphi}_\Escr  $$
for some constants $C<\infty$ and $\sigma \in (0, 1)$. This is equivalent to  a  spectral gap  for the Markov operator  $\Qop \colon \Escr\to\Escr$, between the simple eigenvalue $1$ and the rest of the spectrum of $\Qop$, which is contained in the unit disk (thus $\Qop$ is quasi-compact and simple on $\Escr$). The spectral gap property is widely used to establish statistical properties for dynamical systems (see~\cite[Chapter 5]{DK-book} for its use in conjunction with the Markov operator and~\cite{Baladi, Liverani} for the transfer operator).

\medskip

We introduce the following weaker version of strong mixing.

\begin{definition}\label{strong mixing intro} Let $\{r_n\}_{n\in\N}$ be a decreasing sequence with $r_n \to 0$ as $n\to\infty$.
A Markov system $(M, K,\smu)$  is  called strongly mixing on $(\Escr,  \norm{ \cdot }_\Escr)$ with mixing rate $\{r_n\}_{n\in\N}$  if
for all $\varphi\in\Escr$ and  $n\in\N$,
\begin{equation}\label{mixing condition intro}
 \norm{\Qop^n \varphi-\int_M \varphi\, d\smu}_\infty \leq r_n \, \norm{\varphi}_\Escr  \, . 
 \end{equation}
In this case we also refer to  the restriction of the Markov operator $\Qop$ to the space of observables $\Escr$ as being strongly mixing.
\end{definition}

Notice the distinct r\^oles played by the  norms $\norm{\cdot}_\infty$ and  $\norm{\cdot}_\Escr$ in this definition compared to the spectral form of strong mixing introduced earlier: as the inclusion $\Escr \xhookrightarrow{}  L^\infty (M)$ is assumed bounded, the $\norm{\cdot}_\Escr$-norm is stronger than the $\norm{\cdot}_\infty$-norm. Another way in which this concept is more general is the arbitrary (hence potentially much weaker than exponential) rate of convergence to the stationary measure. For related concepts and sets of results see J. Dedecker, S. Gou\"ezel, F. Merlev\`ede~\cite{slow-mixing}, M. Peligrad, S. Utev, W. B. Wu~\cite{Peligrad}, J. Dedecker, C. Prieur~\cite{Dedecker-Prieur}.

\medskip

We are ready to state the first result of this paper, an abstract, effective, exponential LDT estimate for strongly mixing Markov chains.

\begin{theorem}\label{abstract ldt intro}
Let $(M, K,\smu)$  be a strongly mixing Markov system on $(\Escr,  \norm{ \cdot }_\Escr)$ with mixing rate $r_n \searrow 0$ and let $\{Z_n\}_{n\geq 0}$ be a $K$-Markov chain.
Then for all $\varphi\in \Escr$ and $\epsilon >0$  there are $c (\ep) > 0$ and $n (\ep) \in \N$ such that for all $n \ge n (\ep)$ and $x_0 \in M$   we have
$$
\Pp_{x_0} \left\{\abs{\frac{1}{n}S_n\varphi-\int_M \varphi d\smu}>\epsilon\right\}\leq 8 e^{-c(\epsilon)n}
$$
where the parameters $c(\epsilon) $ and $n (\ep)$ depend explicitly and uniformly on the input data, namely on $\epsilon$, $\norm{\varphi}_\Escr$ and the mixing rate. More precisely,  the threshold $n (\ep)$ is the first integer $n_0$ such that $r_{n_0} \le \frac{\ep}{4 \norm{\varphi}_\Escr}$, while the exponential rate is 
$c (\ep) = \frac{1}{8 \, \norm{\varphi}^2_\Escr} \, \frac{\ep^2}{n (\ep)}$.
\end{theorem}

\begin{remark}
If  the mixing rate is a power, that is, $r_n = C \, \frac{1}{n^p}$ for some $p>0$ and $C<\infty$  then a simple calculation shows that $n (\ep) = \nldt \, \ep^{-\frac{1}{p}}$ and $c (\ep) = \cldt \, \ep^{2+\frac{1}{p}}$, where $\nldt = (4 C  \norm{\varphi}_\Escr)^{\frac{1}{p}}$ and $\cldt = \frac{1}{8} (4C)^{- \frac{1}{p}} \, \norm{\varphi}^{ - 2 - \frac{1}{p}}_\Escr$.

If the mixing rate is exponential, that is, $r_n = e^{- c n}$ for some $c>0$, then $n (\ep) \sim \log \frac{1}{\ep}$ while $c (\ep) \sim \ep^2 \left( \log \frac{1}{\ep} \right)^{-1}$. 
\end{remark}

\begin{remark}
We note that when the mixing rate of the Markov operator is of the form $r_n \sim \frac{1}{n^p}$ with $p > \frac 1 2$, the  deviation estimate we obtain in Theorem~\ref{abstract ldt intro} is suboptimal when compared to the one obtained by M. Peligrad, S. Utev, W. B. Wu in ~\cite[Proposition 2]{Peligrad}. More precisely, as indicated above, our rate of exponential decay of the deviation set is  $c (\ep) \sim \ep^{2+\frac{1}{p}}$ as opposed to the optimal exponential rate  $\ep^2$ in~\cite{Peligrad}. 

Moreover, when the mixing rate is slower, for instance still a power  $r_n \sim \frac{1}{n^p}$ but with $p \in (0,  \frac 1 2]$, 
for a deviation size $\ep$ the bound on the measure of the deviation set in our estimate is of order $e^{- \cldt \, \ep^{(2+\frac 1 p)} \, n}$ while in~\cite{Peligrad} it turns out to be of order $e^{- \cldt \, \ep^{2} \, n^{2 p}}$. Thus  a simple calculation shows that our deviation estimate is stronger than the one in~\cite{Peligrad} when the deviation size $\ep$ and the scale $n$ satisfy the relation
$$\ep > \frac{1}{n^{(1-2 p) p}}$$
and it is weaker otherwise. We then conclude the following. 

\smallskip

When it comes to {\em large} deviations estimates in the literal sense (meaning that the deviation size $\ep$ is a {\em fixed}, possibly small number) our result is essentially the same as the one  in~\cite{Peligrad} when the mixing rate is a power $\frac{1}{n^p}$ with $p > \frac 1 2$ and it is strictly stronger for any other mixing rate (since the bound on the measure of the deviation set we get is always exponentially small, $e^{- c n}$, rather than sub-exponentially small, $e^{- c n^b}, b < 1$ as in~\cite{Peligrad}). 

When it comes to {\em moderate} deviations estimates (meaning that the deviation size $\ep = \ep_n \to 0$ at a certain rate as $n \to \infty$), our result is strictly weaker than the one  in~\cite{Peligrad} when the mixing rate $r_n$ is a power $\frac{1}{n^p}$ with $p \ge \frac 1 2$ or when $r_n \sim \frac{1}{n^p}$ with $p < \frac 1 2$ but $\ep_n \to 0$ relatively fast, and it is strictly stronger when the mixing rate is weaker, for instance a power $\frac{1}{n^p}$ with $p <  \frac 1 2$ and at the same time $\ep_n \to 0$ moderately fast.
\end{remark}

\smallskip

We refer to the article by Dedecker, Merlev\`ede, Peligrad and Utev~\cite{DMPU2009} for more precise results on moderate
deviations when the mixing rate is a power $\frac{1}{n^p}$ with $p > \frac 1 2$. Under these conditions, the authors
establish the functional form of the moderate deviation principle (see their Corollary 2).

\begin{remark}
We would like to emphasize the fact that from the perspective of their further applications to other kinds of problems in dynamical systems and mathematical physics---our main motivation for this work--- we are interested primarily in sharp {\em large} deviations under weaker rates of mixing, which is what was accomplished in this paper. 

\smallskip

As an example of such a future application we mention the regularity of the Lyapunov exponents of certain types of linear cocycles. P. Duarte and S. Klein~\cite[Chapter 3]{DK-book} showed that the availability of exponential, effective large deviations for the multiplicative process associated to the iterates of a linear cocycle implies the optimal, H\"older continuity of its limiting quantity, the Lyapunov exponent. A sub-exponential large deviation would still imply the regularity of the Lyapunov exponent, but it would provide a weaker than H\"older modulus of continuity. 
The relevant deviation size needed in this result is a fraction of the Lyapunov exponent, thus a fixed, possibly small, positive number.

It turns out that Theorem~\ref{abstract ldt intro} can be used to derive such large deviations estimates for many  types of linear cocycles; in particular it will be employed to this end in our future  work on linear cocycles constructed over the base dynamical system considered below in this paper.

\smallskip

Another slight advantage of the approach introduced in this paper for proving deviations estimates for mixing Markov chains is that it is quite straightforward and elementary (by not requiring martingale theory, the Azuma-Hoeffding inequality or perturbation theory of linear operators). 
\end{remark}

\begin{remark}
The mixing assumption in Definition~\ref{strong mixing intro} is phrased in this functional analytic language for esthetic reasons and because this is what we actually obtain in concrete applications. However, what is actually needed for an observable $\varphi \in L^\infty (M)$ to satisfy the LDT estimate in Theorem~\ref{abstract ldt intro} is that the bound~\eqref{mixing condition intro} holds just for $\varphi$ (and not necessarily for all elements of the space $\Escr$).

Furthermore, if in Definition~\ref{strong mixing intro} we only require the uniform convergence of $\Qop^n \varphi$ to $\int \varphi d \smu$ (without an explicit rate of convergence), then an LDT estimate for $\varphi$ still holds; however, this will {\em not be an effective} estimate, in the sense that the threshold $n (\ep)$ for its validity and the exponential rate $c (\ep)$ are not explicitly  determined by the input data. It is thus important to distinguish between effective and non effective LDT estimates.  We note that because of its relation to Hoeffding's inequality and its applications to other topics in dynamical systems  or mathematical physics, it is precisely the effectiveness of the LDT estimates that we are seeking in this work.
\end{remark}



\begin{remark}
The notation $\Pp_{x_0}$ refers to the conditional probability of the Markov chain  $\{Z_n\}_{n\geq 0}$ starting from the point $Z_0 = x_0$. Since $x_0 \in M$ is arbitrary, it follows that the LDT estimate above holds for $K$-Markov chains with any initial distribution, including of course the stationary measure $\smu$.
\end{remark}

We apply Theorem~\ref{abstract ldt intro} to the skew product encoding a random torus translation. The ergodicity of this model, which we call a {\em mixed random-quasiperiodic dynamical system}, was studied in~\cite{CDK-paper1}. Let us recall its definition.

Let $\Sigma:=\T^d$ which we regard as a space of symbols and let $\mu\in \Prob(\Sigma)$ be a probability measure.  Consider the space of sequences $X:= \Sigma^\Z$   which we endow with the product measure $\mu^\Z \in \Prob(X)$ and let $\sigma \colon X \to X$ be the bilateral shift on $X$, where 
$$\sigma \om =  \{\om_{n+1}\}_{n\in\Z} \quad \text{for} \ \om = \{\om_n\}_{n\in\Z} \in X  .$$
Define the (invertible) skew-product dynamics
$$
f \colon X\times \T^d\to X \times \T^d, \quad f(\omega,\theta)=(\sigma \omega,\theta+\omega_0) \, .
$$
Let $m$ be the Haar measure on the torus $\T^d$. Then the measure $\mu^\Z\times m$ is $f$-invariant and we call the measure preserving dynamical system $(X\times \T^d, f, \mu^\Z\times m)$ mixed random-quasiperiodic. This system is ergodic if and only if $\hat{\mu} (k) \neq 1$ for all $k \in \Z^d \setminus \{0\}$, where
$$ \hat\mu(k) := \int_{\Sigma} e^{2\pi i \langle k, \theta\rangle}\, d\mu(\theta) $$
are the Fourier coefficients of the measure $\mu$.

Let $\Hscr_\alpha(X\times \T^d)$ be the space of (uniformly) in each variable $\alpha$-H\"older continuous observables (see Definition~\ref{Holder space} for its formal meaning). The goal is then to establish LDT estimates and a CLT for the corresponding observed mixed random-quasiperiodic system  under some appropriate, general condition on the measure $\mu$.
We will associate to this deterministic dynamical system various stochastic dynamical systems, which will be shown to be strongly mixing on appropriate subspaces of $\Hscr_\alpha(X\times \T^d)$. 

For the first and simplest SDS, consider the Markov chain on $\T^d$
$$
\theta \to \theta+\omega_0 \to \theta+\omega_0+\omega_1\to \cdots
$$
where the frequencies $\om_0, \om_1, \ldots$ are i.i.d. according to the probability measure $\mu$.
The corresponding Markov kernel $K \colon \T^d \to \Prob (\T^d)$ is
$$
K_\theta= \int_{\T^d} \delta_{\theta+\omega_0}d\mu(\omega_0)
$$
and the corresponding Markov operator is given by
$$
\Qop=\Qop_\mu \colon L^\infty (\T^d) \to L^\infty(\T^d), \quad \Qop\varphi(\theta)=\int_{\T^d} \varphi(\theta+\omega_0)d\mu(\omega_0) .
$$

We introduce a general condition on the measure $\mu$ that guarantees the strong mixing property of the Markov operator $\Qop_\mu$ on the Banach algebra 
$C^\alpha(\T^d)$ of $\alpha$-H\"older continuous functions on the torus.

\begin{definition}\label{DC mixing intro}
A measure $\mu\in \Prob(\T^d)$ is said to satisfy a mixing Diophantine condition (mixing DC) if
	$$
	\abs{\hat{\mu}(k)}\leq 1-\frac{\gamma}{\abs{k}^\tau}, \,\forall\,k \in \Z^d\backslash\{0\},
	$$
	for some parameters $\gamma, \tau>0$.
\end{definition}

If $\mu \ll m$, it is not hard to see that there is $\sigma \in (0, 1)$ such that $\abs{\hat\mu(k)} \le \sigma < 1$ for all $k \neq 0$, so evidently $\mu$ satisfies a mixing DC.
If, on the other hand, $\mu$ is a Dirac measure (so that the corresponding SDS is just a deterministic torus translation) then the Fourier coefficients of $\mu$ have modulus $1$ and evidently a mixing DC is not satisfied. Moreover, the corresponding Markov/Koopman operator cannot be strongly mixing, but only weakly  mixing (in the sense of Ces\`aro averages) with power rate, provided that the translation frequency satisfies a Diophantine condition (see~\cite{KLM}).  Beyond these two extreme examples, it turns out that most measures $\mu$ satisfy a mixing DC, that is, the condition is prevalent (see Section~\ref{mixing}). The following result states that the Markov operator is then strongly mixing on $C^\alpha(\T^d)$, so an LDT estimate holds for such observables by Theorem~\ref{abstract ldt intro}.

\begin{theorem}\label{LDT mixed intro}
	If $\mu$ is mixing DC with parameters $\gamma,\tau>0$, then $\Qop$ is strongly mixing with power rate on any space of H\"older continuous functions $C^\alpha(\T^d)$. More precisely, there are $C<\infty$ and $p>0$ such that
	$$
	\bnorm{\Qop^n\varphi -\int \varphi dm}_{\infty} \leq C  \frac{1}{n^p} \, \norm{\varphi}_\alpha , \quad \forall\, \varphi\in C^\alpha(\T^d), \forall n \ge 1.
	$$
	In fact, $p$ can be chosen to be $\frac{\alpha}{\tau} -\iota$, for any $\iota > 0$, in which case $C$ will depend on $\iota$. 
	
Moreover, an effective LDT estimate holds for the Markov chain on the torus
$
\theta \to \theta+\omega_0 \to \theta+\omega_0+\omega_1\to \cdots
$ starting from any point $\theta \in \T^d$ and with any observable $\varphi \in C^\alpha(\T^d)$. More precisely,  $\forall\, \epsilon>0$ there is $n (\ep) \in \N$ such that $\forall n \ge n (\ep)$ and $\forall\, \theta\in \T^d$  we have
	$$
	\mu^\N\left\{\abs{\frac{1}{n}[\varphi(\theta)+\cdots+\varphi(\theta+\om_0 + \cdots+\omega_{n-1})]-\int \varphi dm}>\epsilon\right\}< e^{-c(\epsilon)n}
	$$
where  $c(\epsilon)$ and $n (\ep) $ depend explicitly and uniformly on the data, namely on $\ep, \norm{\varphi}_\alpha, \ga, \tau$.
\end{theorem}

We note that if the measure $\mu$ is only mixing, which is equivalent to $\abs{\hat \mu (k)} < 1$ for all $k\neq 0$ (see Theorem~\ref{mixing characterizations}), 
then $\Qop^n\varphi \to \int \varphi dm$ uniformly for any $\varphi \in C^0 (\T^d)$. Thus a {\em non effective} LDT estimate holds for the Markov chain 
$
\theta \to \theta+\omega_0 \to \theta+\omega_0+\omega_1\to \cdots
$ starting from any point $\theta \in \T^d$ and for any continuous observable. The same would hold (modulo the necessary adaptations) if instead of the torus $\T^d$ we considered any compact, Hausdorff, abelian group. This is a particular version of the main result in a recent work by G. Monakov~\cite{Grisha1} on large deviations type estimates for non-stationary random walks on compact, metrizable, abelian groups.

\medskip

If $\alpha>\tau$ so that the mixing rate of the Markov operator is $r_n = C \frac{1}{n^p}$ with $p>1$, using an abstract CLT for Markov chains by Gordin and Liv\v{s}ic (see~\cite{Gordin-L}) we obtain the following.

\begin{theorem}\label{CLT1 intro}
Assume that $\mu \in \Prob (\T^d)$ satisfies a mixing DC with parameters $\gamma,\tau > 0$ and let $\alpha>\tau$. Then for every $\varphi \in C^\alpha(\T^d)$ nonzero with zero mean, there exists $\sigma = \sigma(\varphi)>0$ such that
	$$
	\frac{S_n\varphi}{\sigma \, \sqrt n} \stackrel{d}{\longrightarrow} \mathcal{N}(0,1) \, .
	$$
More precisely, for Lebesgue a.e. $\theta \in \T^d$ and for all $\lambda \in \R$ we have that 
$$\lim_{n\to\infty} \mu^\N \left\{ \frac{\varphi (\theta) +  \cdots + \varphi (\theta + \om_0 + \cdots +\om_{n-1})}{\sigma \sqrt{n}}  \le \la \right\} = 
\int_{- \infty}^\la e^{- \frac{x^2}{2}} \, \frac{d x}{\sqrt{2 \pi}} \, .$$
\end{theorem}

\smallskip

If the measure $\mu$ is {\em discrete}, then this CLT was already obtained, via completely different methods, by B. Borda, see~\cite[Theorem 4]{Borda}. His formulation of the assumption on the measure $\mu$ is different from ours, but they can be easily shown to be equivalent in the discrete measure setting.  
We note, moreover, that there is a vast literature concerning various types of limit theorems for toral translations, see~\cite{DF-CLT} and references therein.

\begin{remark}
A more recent and sharper abstract CLT due to Maxwell and Woodroofe~\cite{CLT-MW} allows us to derive the convergence to the normal distribution 
$\frac{S_n\varphi}{\sqrt n} \stackrel{d}{\longrightarrow} \mathcal{N}(0,\sigma^2 (\varphi))$ under a weaker assumption regarding the regularity of the observable $\varphi$ relative to the mixing of the measure $\mu$. More precisely, we would only need that  $\alpha > \frac \tau 2$ instead of $\alpha > \tau$. However, we are not able to establish a criterion for the positivity of the standard deviation $\sigma (\varphi)$ in this more general setting.
\end{remark}

\medskip

Observables depending only on one variable are of course quite particular. In Section~\ref{randtransl} we consider an extension of the Markov kernel introduced above, which we prove to be strongly mixing. This leads to LDT estimates (and a CLT) for observables in  $\Hscr_\alpha(X\times \T^d)$ depending only on the past (i.e. on the negatively indexed coordinates). Finally, through a hyperbolic dynamics argument, namely a holonomy reduction (that requires slightly more regularity), we relate 
observables that depend on both past and future to ones that depend only on the past, and transfer the statistical properties satisfied by the latter to the former. Our results are thus as follows.

\begin{theorem}\label{ds gen ldt intro}
	Assume that $\mu \in \Prob (\T^d)$ satisfies a mixing DC with parameters $\gamma, \tau > 0$ and that $\varphi \in \Hscr_\alpha(X\times \T^d)$ with $\alpha > 0$. Then for all $\epsilon >0$  there are $c (\ep) > 0$ and $n (\ep) \in \N$ such that for all $n \ge n (\ep)$ we have
	$$
	\mu^{\Z}\times m\left\{ \abs{\frac{1}{n}S_n\varphi -\int \varphi \, d\mu^\Z \times m}>\epsilon\right\}<e^{-c(\epsilon)n}
	$$
	where $c(\epsilon) = \cldt \, \epsilon^{2+\frac{1}{p}}$ and $n (\ep) = \nldt \, \ep^{- \frac{1}{p}}$ for constants $\cldt > 0$, $\nldt \in \N$ which depend explicitly and uniformly on the data (namely on $\norm{\varphi}_\alpha, \ga, \tau$) and a power $p > 0$ which can be chosen arbitrarily close to $\frac{\alpha}{2 \tau}$.  
\end{theorem}

In order to state the CLT we need the following concept.
\begin{definition}\label{coboundary def}	
We say that a continuous observable $\varphi \colon X \times \T^d \to \R$ is a {\em coboundary} relative to the dynamical system $(X \times \T^d, f, \mu^\Z \times m)$ if there exists a continuous function $\eta \colon X \times \T^d \to \R$ satisfying the cohomological equation
$$\varphi = \eta - \eta \circ f \qquad \mu^\Z \times m-\text{a.e.} $$
\end{definition}	

If $\varphi$ is a coboundary, then obviously it has zero mean, that is, $ \int \varphi \, d ( \mu^\Z \times m ) = 0$; moreover, 
$\displaystyle \frac{S_n\varphi}{\sqrt n} = \frac{\eta - \eta \circ f^n}{\sqrt n} \to 0$
uniformly, so a standard CLT cannot hold. Excluding this case, we obtain the following.

\begin{theorem}\label{CLT introduction}
	Assume that $\mu \in \Prob (\T^d)$ satisfies a mixing DC with parameters $\gamma, \tau > 0$ and let $\alpha>2 \tau$. Given any observable  $\varphi \in \Hscr_\alpha(X\times \T^d)$ with zero mean, if $\varphi$ is not a coboundary then there exists $\sigma = \sigma(\varphi)>0$ such that
	$$
	\frac{S_n\varphi}{\sigma \, \sqrt n} \stackrel{d}{\longrightarrow} \mathcal{N}(0,1) \, .
	$$
\end{theorem}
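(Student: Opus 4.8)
The plan is to deduce the statement from the Gordin--Liv\v{s}ic central limit theorem for Markov chains, after using the reductions of Section~\ref{randtransl} to bring the observable under the control of the strongly mixing Markov operator, and then to verify non-degeneracy of the limiting variance directly from the mixing Diophantine condition.

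First I would invoke the holonomy reduction of Section~\ref{randtransl} to write $\varphi = \psi + g\circ f - g$, where $g \in L^\infty(\mu^\Z\times m)$ and $\psi$ is a H\"older observable depending only on the past coordinates, hence a H\"older function on the state space of the extended Markov system, with Markov operator again denoted $\Qop$ and stationary measure $\smu$. Since $\mu^\Z\times m$ is $f$-invariant we have $\int\psi\,d\smu = \int\varphi\,d(\mu^\Z\times m) = 0$. Because $\alpha > 3\tau$, the strong mixing estimate for past-dependent observables of Section~\ref{randtransl} holds with a power $p > 1$, so there is $C < \infty$ with $\norm{\Qop^n\psi}_{C^0} = \norm{\Qop^n\psi - \int\psi}_{C^0} \le C\,\norm{\psi}_\alpha\, n^{-p}$ for all $n\ge1$; as $\smu$ is a probability measure, $\norm{\Qop^n\psi}_{L^2(\smu)} \le \norm{\Qop^n\psi}_{C^0}$, and $p>1$ gives $\sum_{n\ge0}\norm{\Qop^n\psi}_{L^2(\smu)} < \infty$. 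Moreover the mixing DC forces the mixed random--quasiperiodic system, and hence the associated Markov chain, to be ergodic (see~\cite{CDK-paper1}).

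The bound $\sum_{n\ge0}\norm{\Qop^n\psi}_{L^2(\smu)} = \sum_{n\ge0}\norm{\EE[\psi\circ Z_n\mid Z_0]}_{L^2(\smu)} < \infty$ is precisely the hypothesis of the Gordin--Liv\v{s}ic CLT. Setting $h := \sum_{n\ge0}\Qop^n\psi\in L^2(\smu)$ and $\xi_k := h\circ Z_{k+1} - \Qop h\circ Z_k$, this yields the martingale--coboundary decomposition $S_n\psi = \sum_{k=0}^{n-1}\xi_k + h\circ Z_0 - h\circ Z_n$ with $\{\xi_k\}$ a stationary ergodic square-integrable martingale difference sequence, whence $\frac1{\sqrt n}S_n\psi \stackrel{d}{\longrightarrow}\mathcal N(0,\sigma^2)$ with $\sigma^2 = \EE[\xi_0^2] = \int\psi^2\,d\smu + 2\sum_{k\ge1}\int\psi\,(\Qop^k\psi)\,d\smu$. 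To return to $\varphi$, note that under $\mu^\Z\times m$ the Birkhoff sums $S_n\varphi$ along $f$ agree in law with the stochastic Birkhoff sums of the stationary chain, while $\abs{S_n\varphi - S_n\psi} = \abs{g\circ f^n - g}\le 2\norm{g}_\infty$; hence $\frac1{\sqrt n}(S_n\varphi - S_n\psi)\to0$ uniformly, and Slutsky's theorem gives $\frac1{\sqrt n}S_n\varphi \stackrel{d}{\longrightarrow}\mathcal N(0,\sigma^2)$ with the same variance.

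It remains to prove $\sigma = \sigma(\varphi) > 0$ whenever $\varphi$ is non-constant, which I expect to be the crux. From the decomposition above, $\sigma^2 = 0$ forces every $\xi_k$ to vanish, so $\{S_n\varphi\}_n$ is bounded in $L^2$; by the standard coboundary criterion (a mean-ergodic and weak-compactness argument, using ergodicity and $\int\varphi=0$) this means $\varphi = G\circ f - G$ for some $G \in L^2(\mu^\Z\times m)$. I would exclude this by Fourier analysis in the torus variable: writing $\varphi(\om,\theta)=\sum_{k\in\Z^d}\varphi_k(\om)\,e^{2\pi i\langle k,\theta\rangle}$ and $G$ likewise, the cohomological equation decouples into $\varphi_k(\om) = e^{2\pi i\langle k,\om_0\rangle}\,G_k(\sigma\om) - G_k(\om)$; the $k=0$ component is harmless, while for $k\ne0$ the estimate $\abs{\hat\mu(k)}\le 1-\gamma\abs{k}^{-\tau} < 1$ makes the weighted transfer operator $\Psi\mapsto e^{2\pi i\langle k,\om_0\rangle}\,\Psi\circ\sigma$ over the Bernoulli shift $(X,\sigma,\mu^\Z)$ contracting, with spectral radius $\abs{\hat\mu(k)}$, which should force $\varphi_k\equiv 0$, so that $\varphi$ is constant. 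The mechanism is transparent in the one-variable model $\varphi=\varphi(\theta)$, where $\widehat{\Qop^n\varphi}(k)=\hat\mu(k)^n\hat\varphi(k)$ and a direct summation of $\sigma^2 = \int\varphi^2 + 2\sum_{k\ge1}\langle\varphi,\Qop^k\varphi\rangle$ gives the asymptotic variance
\[
\sigma^2 = \sum_{k\in\Z^d\setminus\{0\}}\abs{\hat\varphi(k)}^2\,\frac{1-\abs{\hat\mu(k)}^2}{\abs{1-\hat\mu(k)}^2},
\]
which is strictly positive unless $\hat\varphi(k)=0$ for all $k\ne0$. The main obstacle is extracting the required rigidity from the mixing DC for observables that genuinely depend on the base $X$, where the extended Markov/transfer operator is no longer normal; the remaining steps are bookkeeping with the results of the previous sections together with the off-the-shelf martingale CLT and Slutsky's theorem.
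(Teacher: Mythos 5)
Your overall scheme --- holonomy reduction to a past-dependent observable, the Gordin--Liv\v{s}ic CLT for the resulting Markov chain, and Slutsky's theorem to absorb the remaining deterministic coboundary --- is exactly the paper's route. Where you diverge, and where you rightly flag the difficulty, is the nondegeneracy of $\sigma^2$. The paper deduces it abstractly from Proposition~\ref{corclt}: vanishing variance forces $\psi(Z_1)=\Qop\psi(Z_0)$ a.s., and strong mixing in the sup norm together with the richness of $\Hscr_\alpha$ is then used to argue that $\psi$ must be constant, whence $\varphi=\psi-\Qop\psi=0$. Your route is cohomological: $\sigma^2=0$ forces $\varphi=G\circ f-G$ with $G\in L^2$, and you propose to kill each nonzero Fourier mode $\varphi_k$ in $\theta$ by a spectral-radius estimate for the twisted operator and conclude that $\varphi$ is constant.

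Two steps in the cohomological route fail as stated. First, the operator $\Psi\mapsto e^{2\pi i\langle k,\om_0\rangle}\Psi\circ\sigma$ on $L^2(\mu^\Z)$ is a unitary (a unimodular multiplier composed with the measure-preserving invertible shift), so its spectral radius is $1$, not $\abs{\hat\mu(k)}$; the contraction $\abs{\hat\mu(k)}$ appears for the transition (averaging) operator, not the Koopman side, and the twisted cohomological equation $\varphi_k=e_k(\om_0)\,G_k\circ\sigma-G_k$ requires a genuine rigidity argument rather than a spectral one. Second, and decisively, the Fourier decoupling leaves the mode $k=0$ untouched: $\varphi_0(\om)=G_0(\sigma\om)-G_0(\om)$ is an ordinary coboundary equation over the Bernoulli shift, which has many nonconstant solutions. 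In fact $\varphi(\om,\theta)=g(\om_0)-g(\om_1)$ with $g$ nonconstant, $\int g\,d\mu=0$, lies in $\Hscr_\alpha(X\times\T^d)$, is nonconstant with zero mean, and satisfies $\varphi=G\circ f-G$ with $G(\om,\theta)=-g(\om_0)$; then $S_n\varphi=g(\om_0)-g(\om_n)$ is uniformly bounded, so $S_n\varphi/\sqrt n\to 0$, and the claimed CLT with $\sigma>0$ cannot hold for this observable. Thus no argument can prove the statement as literally written: one must either exclude such coboundaries from the hypothesis or weaken the conclusion to the usual dichotomy (either $\sigma^2>0$ and the CLT holds, or $\varphi$ is an $L^2$-coboundary). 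The same example also shows that the paper's Proposition~\ref{corclt} is not correct as stated --- the asserted induction ``$\psi(y)=(\Qop^n\psi)(x)$ for $K_x^n$-a.e.\ $y$'' does not follow from the base case, since the a.s.\ identity $\psi(Z_1)=\Qop\psi(Z_0)$ does not propagate to $\psi(Z_2)=\Qop^2\psi(Z_0)$ (for the $K^-$-chain one finds $\psi=g(\om_{-1})$, $\Qop^-\psi=g(\om_0)$, $(\Qop^-)^2\psi\equiv 0$, yet $\psi(Z_2)=g(\om_1)$) --- so the nondegeneracy step needs repair in both your proposal and the paper.
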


\medskip

The abstract LDT estimate in Theorem~\ref{abstract ldt intro} is applicable to many other types of  dynamical systems. More precisely, one can recover such previously established results for: uniformly expanding maps on a compact, connected Riemannian manifold or for piecewise expanding maps of an interval (see~\cite{Anselmo-dissertation}); certain types of linear cocycles, namely irreducible, locally constant linear cocycles over Bernoulli and Markov shifts (see~\cite[Chapter 5]{DK-book}) and for fiber-bunched cocycles (or partially hyperbolic projective cocycles over uniformly hyperbolic maps, see~\cite{DKP}). Moreover, we will apply this result in future works to linear cocycles over mixed random-quasiperiodic base dynamics (or random compositions of linear cocycles over a toral translation), which was the initial motivation for the current work. 
Furthermore, we believe that this abstract result may be adapted and used to derive effective LDT estimates for many other types of skew-product dynamical systems.

\medskip

The rest of the paper is organized as follows. In Section~\ref{aldts} we describe the relevant concepts regarding stochastic dynamical systems, we state and prove the abstract LDT estimate in Theorem~\ref{abstract ldt intro}, we recall the abstract CLT of Maxwell and Woodroofe and derive a version thereof to be used in the sequel. In Section~\ref{mixing} we introduce mixing measures and establish the strong mixing of the Markov operator corresponding to such measures. In Section~\ref{randtransl} we study the statistical properties of mixed random-quasiperiodic dynamical systems via extensions and holonomy reduction, thus establishing the other results formulated above.

 
\section{Statistical properties for Markov processes}
\label{aldts}
We begin by recalling some basic concepts.

\begin{definition}
A stochastic dynamical system (SDS) is any continuous map
$K \colon M\to \Prob(M)$ on a Polish metric space $M$ where $\Prob(M)$ denotes the space of Borel probability measures on $M$ endowed with the weak  topology  (see~\cite[Section 2.1]{CDK-paper1} for more details).
\end{definition}

Let $\smu \in \Prob (M)$ be a $K$-stationary measure, that is, a measure such that $\smu = K\ast \smu := \int K_x\, d\smu(x)$. An SDS $K$ induces the Markov operator
$\Qop=\Qop_K \colon L^\infty(M)\to L^\infty(M)$ defined by
$$ (\Qop \varphi)(x):= \int_M \varphi(y)\, dK_x(y) .$$

Note that $\Qop$ is a positive operator with $\Qop \ind = \ind$, hence it is bounded with norm one on $L^\infty(M)$.

\begin{definition}\label{marsys}
A Markov system is a tuple $(M,K,\smu,\Escr)$  where
\begin{enumerate}
	\item $M$ is a Polish metric space,
	\item $K \colon M\to \Prob(M)$ is an SDS,
	\item  $\smu\in\Prob(M)$ is a $K$-stationary measure,
	\item  $\Escr=\left(\Escr,\norm{\cdot}_\Escr\right)$ is a Banach subspace
	of $L^\infty(M)$ such that the inclusion $\Escr  \xhookrightarrow{}  L^\infty(M)$ and the action of $\Qop$ on $\Escr$ are both continuous. In other words there is $M < \infty$ such that
	$\norm{\varphi}_\infty\le  \norm{\varphi}_\Escr$ and
	$\norm{\Qop\varphi}_\Escr\leq M \, \norm{\varphi}_\Escr$,
	for all $\varphi\in \Escr$. 
\end{enumerate}
\end{definition}

\begin{remark}
When $\Escr=C_b (M)$ (the space of bounded, continuous functions on $M$), condition (4) follows from (1)-(3).
\end{remark}

\begin{definition}
\label{def decaying rate}
We call decaying rate  any decreasing sequence $r = \{r_n\}_{n \in \N}$ of positive real numbers such that $r_n \to 0$ as $n\to\infty$. 
\end{definition}

Examples of decaying rates are:  $r_n = \exp(-c \,n)$ with $c >0$ (exponential); $r_n = \exp(-n^b)$ with
$0<b<1$ (sub-exponential); and $r_n = C \, n^{-p}$ with $C < \infty$ and $p>0$ (power or polynomial).

\begin{definition}
\label{strong mixing}
Let $r$ be any decaying rate. We say that $(M, K,\smu,\Escr)$ is strongly mixing with mixing rate $r$  if
 for all $\varphi\in\Escr$ and  $n\in\N$,
\begin{equation}\label{mixing condition}
 \norm{\Qop^n \varphi-\int_M \varphi\, d\smu}_\infty \le r_n \,  \,\norm{\varphi}_\Escr \, . 
 \end{equation}
\end{definition}


%



On the product space  $X^+=M^\N$ consider the sequence of random variables $\{Z_n \colon X^+\to M\}_{n\in\N}$,
 $ Z_n(x):= x_n$ where $x=\{x_n\}_{n\in \N}\in X^+$. By Kolmogorov's extension theorem, given $\pi\in \Prob(M)$  there exists a unique
 probability measure $\Pp_\pi$ on $X^+$ for which $\{Z_n\}_{n\in \N}$ is a  Markov process  with transition probability kernel $K$ and initial probability distribution $\pi$, i.e.,
 such that for every Borel set $A\subset M$ and any $n\geq 1$,
 \begin{enumerate}
 	\item [(a)] $\Pp_\pi[ \, Z_n\in A \,\vert \,  Z_0,Z_1,\ldots, Z_{n-1} ] = K_{Z_{n-1}}(A)$,
 	\item [(b)] $\Pp_\pi[ Z_0 \in A]=\pi(A)$.
 \end{enumerate}
When $\pi=\delta_x$ is a Dirac measure we write $\Pp_x$ instead of $\Pp_{\delta_x}$. When $\pi=\smu$ the probability measure $\Pp=\Pp_\smu$  makes
$\{Z_n\}_n$ a stationary process. This measure is preserved by the one sided
shift $\sigma \colon X^+\to X^+$. We have 
$$ \Pp_\smu (B)=\int_M \Pp_x(B)\, d\smu(x)\quad \text{ and } \quad
\EE_\smu[\varphi]=\int_M \EE_x[ \varphi ]\, d\smu(x)  $$
for any Borel set $B\subset X^+$ and any bounded measurable function
$\varphi \colon X^+\to\R$, where $\EE_x$ stands for the expected value w.r.t. $\Pp_x$ while $\EE_\smu$ denotes the  expected value w.r.t. $\Pp_\smu$.


\medskip

We are ready to state and prove an abstract
large deviations type theorem for strongly mixing Markov processes.

Let $\{Z_n\}_{n\geq 0}$ be the $K$-Markov chain $Z_n \colon X^+ \to M$, $Z_n(x)=x_n$. For an observable $\varphi \colon M \to \R$ and an index $j \ge 0$ let $\varphi_j := \varphi \circ Z_j \colon X^+ \to \R$ and denote by 
$$
S_n \varphi := \varphi_0 + \cdots + \varphi_{n-1} =  \varphi(Z_0)+\cdots + \varphi(Z_{n-1})
$$
the corresponding ``stochastic'' Birkhoff sums. Note that for $x = \{x_n\}_{n\ge 0} \in X^+$ and $n \in \N$,
$$S_n \varphi (x) =  \varphi (x_0) + \cdots +  \varphi (x_{n-1}) \, .$$

\begin{theorem}\label{abstractldt}
	Let $(M, K,\smu, \Escr)$ be a strongly mixing Markov system with mixing rate $r = \{r_n\}_{n\in\N}$. Then for all $\varphi\in \Escr$ and $\epsilon >0$  there are $c (\ep) > 0$ and $n (\ep) \in \N$ such that for all $n \ge n (\ep)$ and $x_0 \in M$   we have
	$$
	\Pp_{x_0}\left\{\abs{\frac{1}{n}S_n\varphi-\int_M \varphi d\smu}>\epsilon\right\}\leq 8 e^{-c(\epsilon)n} \, .
	$$
	
The parameters $c(\epsilon)$ and $n (\ep)$ depend explicitly and uniformly on the input data, namely on $\ep, \norm{\varphi}_\Escr$ and $r$.  
More precisely,  $n (\ep)$ is the first integer $n_0$ such that $r_{n_0} \le \frac{\ep}{4 \norm{\varphi}_\Escr}$, while 
$c (\ep) = \frac{1}{8 \, \norm{\varphi}^2_\Escr} \, \frac{\ep^2}{n (\ep)}$.
\end{theorem}

\begin{proof}
Fix $x_0 \in M$ and $\varphi \in \Escr$ and let $L := \norm{\varphi}_\Escr$. Without loss of generality, we may assume that $\int_M\varphi d\smu=0$, otherwise we consider $\varphi-\int_M\varphi d\smu$. Moreover, replacing $\varphi$ by $- \varphi$, it is enough to estimate $\Pp_{x_0}\{S_n \varphi \geq n\epsilon\}$. Using Bernstein's trick, for any $t>0$ we have
	$$
	\Pp_{x_0}\{S_n \varphi \geq n\epsilon\}=\Pp_{x_0}\{e^{t S_n \varphi} \geq e^{tn\epsilon}\}\leq e^{-tn\epsilon} \, \EE_{x_0}(e^{tS_n\varphi}) ,
	$$
thus we need to estimate the exponential moments $\EE_{x_0}(e^{tS_n\varphi})$. This will be achieved  by relating them to powers of the Markov operator $\Qop$ evaluated at a suitably chosen observable, as shown in the next lemma.

\begin{lemma}\label{the lemma} 
		Let $\varphi\in \Escr, \norm{\varphi}_\Escr =: L < \infty$. Let $n\geq n_0$ be two integers and denote by $m:=\lfloor \frac{n}{n_0}\rfloor$. Then for all $x_0\in M$ and for all $t>0$,
		$$
		\EE_{x_0}(e^{tS_n\varphi})\leq e^{2tn_0L}\norm{Q^{n_0}(e^{tn_0\varphi})}_\infty^{m-1}.
		$$
	\end{lemma}
	\begin{proof}[Proof of the lemma] Write $n = m \, n_0 + r$, with $0\le r < n_0$. Fix $t>0$ and let $f := e^{t \varphi} \colon M \to \R$, so $0 < f \le e^{t L}$. Then $\forall x = \{x_n\}_{n\ge 0} \in X^+$ we have 
		\begin{align*}
		e^{t S_n\varphi (x)}  = \prod_{j=0}^{n-1} e^{t \varphi (x_j)}   = & \prod_{j=0}^{n-1}f(x_j) \\
		                  =  & f(x_0)\cdot f(x_{n_0})\cdots f(x_{(m-1)n_0}) \cdot \\
			&  f(x_1)\cdot f(x_{n_0+1})\cdots f(x_{(m-1)n_0+1}) \cdot \\
			& \vdots\\
			&   f(x_{n_0-1})\cdot f(x_{2n_0-1})\cdots f(x_{mn_0-1}) \cdot \\
			&  f(x_{mn_0})\cdot f(x_{mn_0+1})\cdots f(x_{mn_0+r-1}) \\
			=: & F_0 (x) \cdot F_1 (x) \cdots F_{n_0-1} (x) \cdot F_{n_0} (x)
		\end{align*}
where $F_k \colon X^+ \to \R$, $F_k (x) := f (x_k) \cdot f (x_{n_0 + k}) \cdots f(x_{(m-1)n_0+k})$ for $0 \le k \le n_0-1$ and $F_{n_0} (x) :=  f(x_{mn_0})\cdot f(x_{mn_0+1})\cdots f(x_{mn_0+r-1})	$. 	
		
By H\"older's inequality, 
$$
\EE_{x_0} (e^{t S_n\varphi} ) =  \EE_{x_0}( F_0\cdots F_{n_0-1} \cdot F_{n_0}  ) \le  \prod_{k=0}^{n_0-1}[\EE_{x_0}(F_k^{n_0})]^{\frac{1}{n_0}} \cdot \norm{F_{n_0}}_\infty  \, .
$$
		
Note that $\norm{F_{n_0}}_\infty    \le e^{t r L} \le   e^{tn_0L}$. 
We will show that $$\EE_{x_0}(F_k^{n_0})\leq e^{tn_0L}\norm{\Qop^{n_0}(e^{tn_0\varphi})}_\infty^{m-1} \quad  \forall \,k=0,\cdots, n_0-1$$ which will conclude the proof.
		
Fix $k \in \{0, \ldots, n_0-1\}$ and note that
$$
F_k^{n_0} (x) =e^{tn_0\varphi(x_k)}e^{tn_0\varphi(x_{n_0+k})}\cdots e^{tn_0\varphi(x_{(m-1)n_0+k})} \,  .
$$

To simplify notations, let $G \colon X^+ \to \R$, $G(x) :=F_k^{n_0} (x)$ and $g \colon M \to \R$, $g(a) := e^{tn_0\varphi(a)}$. Then $0 < g \le e^{tn_0 L}$ and
$$
G(x)=g(x_k) \cdot g(x_{n_0+k})\cdots g(x_{(m-1)n_0+k}) , 
$$
which is a function that depends on a finite and {\em sparse} set of coordinates, arranged in an arithmetic progression of length $m$ with distance $n_0$ between consecutive terms. We will show that
	        $$
		\EE_{x_0}(G)\leq e^{tn_0L}\norm{Q^{n_0}g}_\infty^{m-1}
		$$
		where
		$$
		\EE_{x_0}(G)=\int_{X^+} G(x)d\Pp_{x_0}(x)=\int_{X^+} G(x)\prod_{i=n}^{1}dK_{x_{i-1}}(x_i) \, .
		$$
		
		We split the set $I = \{1,2,\ldots, (m-1)n_0+k\}$ of $(m-1)n_0+k$ many indices  into
		$$
		I=\left\{1,\ldots,k\right\}\cup I_1 \cup \cdots \cup I_{m-1}
		$$
		where for $j=1,\ldots, m-1$
		$$
		I_j:=\left\{(j-1)n_0+k+1,\cdots, jn_0+k\right\}
		$$
		is a block of length $n_0$. 
		
Since $G (x)$ does not depend on the variables $x_j$ with $j\notin I$, we have
		\begin{align}
		\EE_{x_0}(G)=&\int_{X^+} G(x)\prod_{i=n}^{1}dK_{x_{i-1}}(x_i) \notag \\
		=&\int g(x_k)\cdots g(x_{(m-2)n_0+k})\left( \int g(x_{(m-1)n_0+k}) \prod_{i\in I_{m-1}}dK_{x_{i-1}}(x_i)\right)  \notag \\
		&\prod_{i\in I\backslash I_{m-1}}dK_{x_{i-1}}(x_i) \notag \\
		=&\int g(x_k)\cdots g(x_{(m-2)n_0+k}) \left(\Qop^{n_0}g(x_{(m-2)n_0+k}) \right)  \prod_{i\in I\backslash I_{m-1}}dK_{x_{i-1}}(x_i)  \notag \\
       \leq & \norm{\Qop^{n_0}g}_\infty \int g(x_k)\cdots g(x_{(m-2)n_0+k})\prod_{i\in I\backslash I_{m-1}}dK_{x_{i-1}}(x_i) \, . \label{ineq lemma}
	\end{align}

Applying the same argument $m-1$ times, we obtain
\begin{align*}
\EE_{x_0}(G)&\leq\norm{\Qop^{n_0}g}_\infty^{m-1}\cdot \int g(x_k)dK_{x_{k-1}}(x_k)\cdots dK_{x_0}(x_1)\\
&\leq \norm{\Qop^{n_0}g}_\infty^{m-1} e^{tn_0L} ,
\end{align*}
which completes the proof of the lemma.
\end{proof}

We return to the proof of the theorem. Using the strong mixing assumption, for all $n_0 \in \N$ and $\varphi \in \Escr$ we have that
$$
\norm{\Qop^{n_0}\varphi}_\infty \le r_{n_0} \,  \norm{\varphi}_\Escr \,  .
$$ 
	
By Lemma~\ref{the lemma}, for all $n\geq n_0$, 
	$$
	\EE_{x_0}(e^{tS_n\varphi}) \le e^{2tn_0L} \, \norm{\Qop^{n_0}(e^{tn_0\varphi})}_\infty^{\frac{n}{n_0}-1}.
	$$

However, $\varphi\in \Escr$ does not necessarily imply that $e^{ t n_0\varphi}\in \Escr$; even if this were the case, its mean would not be zero, so the strong mixing condition cannot be directly applied to the observable $e^{tn_0\varphi}$.  
	
The following inequality holds for all $y \in \R$:
	$$
	e^y\leq 1+y+\frac{y^2}{2}\, e^{\abs{y}} .
	$$
	
Hence we can write
	$$
	e^y= 1+y+\frac{y^2}{2}\, \psi(y)
	$$
	where the function $\psi$ satisfies the bound $\abs{\psi(y)}\le e^{\abs{y}}$. Then
	$$
	e^{tn_0\varphi}=1+tn_0\varphi+\frac{1}{2}t^2n_0^2\varphi^2\psi(tn_0\varphi)
	$$
	where $\norm{\psi(tn_0\varphi)}_\infty \le  e^{tn_0\norm{\varphi}_\infty} \le  e^{tn_0L} \leq 2$ if  $t n_0 L \le \log 2$. 
	
Applying the Markov operator we then have:
\begin{align*}
\Qop^{n_0}(e^{tn_0\varphi}) & = 1 + t n_0 \, \Qop^{n_0} (\varphi) + \frac{1}{2}t^2n_0^2 \, \Qop^{n_0}(\varphi^2\psi(tn_0\varphi)) \, . 
\end{align*}

Therefore,
	$$
	\norm{\Qop^{n_0}(e^{tn_0\varphi})}_\infty\leq 1+t n_0 r_{n_0} L + t^2 n_0^2 L^2 \le 1+2t^2 n_0^2  \, L^2 
	$$
provided that $r_{n_0} \le t n_0 L \le \log 2$.

Using the inequality $(1+y)^{\frac{1}{y}}\leq e$ for $y>0$ we get
	$$
	\norm{\Qop^{n_0}(e^{tn_0\varphi})}_\infty^{\frac{n}{n_0}}\leq(1+2t^2n_0^2L^2)^{\frac{1}{2t^2n_0^2L^2 }\cdot 2t^2n_0^2L^2  \cdot \frac{n}{n_0}}\leq e^{2t^2n_0L^2  \, n}.
	$$
	
Combining this with the estimate given by Lemma~\ref{the lemma} and recalling that $e^{tn_0L} \leq 2$, we get
	$$
	\EE_{x_0}(e^{tS_n\varphi})\leq e^{2tn_0L} \, e^{2t^2n_0L^2  \, n} \le 4 e^{2t^2n_0L^2  \, n} \, .
	$$
	
Fix $\ep > 0$. Using  Bernstein's trick, for all $t > 0$ we have
$$
\Pp_{x_0}\{S_n \varphi \geq n\epsilon\}  \le e^{-t n \epsilon} \, \EE_{x_0}(e^{tS_n\varphi}) \le 4 e^{-t n \epsilon} e^{2t^2 n_0 L^2  n} = 4 e^{- n (t \epsilon - 2 t^2 n_0 L^2 )} \, .
$$
	
Then	
$$\Pp_{x_0}\{S_n \varphi \geq n\epsilon\} \le 4 e^{- c (\ep) n}$$
where 
$ c (\ep) = \max_{t>0} (t \epsilon - 2 t^2 n_0 L^2 ) = \frac{1}{8 L^2} \, \frac{\ep^2}{n_0}$
is attained when $t = \frac{\ep}{4 n_0 L^2}$.

This choice of the parameter $t$ is compatible with the constraint $t n_0 L \le  \log 2$, since this is equivalent to $t n_0 L = \frac{\ep}{4 L} \le \log 2$, which of course must hold (the size $\ep$ of the deviation cannot exceed $2 \norm{\varphi}_\infty \le 2 L$). The other constraint,  $r_{n_0} \le t n_0 L$, is equivalent to $r_{n_0} \le \frac{\ep}{4 L}$. Therefore we choose $n_0 = n (\ep)$ to be the first  integer such that $r_{n_0} \le \frac{\ep}{4 L}$, which completes to proof  of the theorem.
\end{proof}

\begin{remark}\label{rem2.3}
Theorem~\ref{abstractldt} has an equivalent but more abstract probabilistic version. Let $(\Omega, \mathcal{P})$ be any probability space and let $(M, K, \smu, \Escr)$ be a Markov system. Given a $K$-Markov chain $\{\xi_n:\Omega \to M\}_{n\geq 0}$ with $\xi_0 = x_0$, where $x_0 \in M$,
define $\xi: \Omega \to M^{\N}$ by $\xi(\omega):= \{\xi_n(\omega)\}_{n\geq 0}$. A direct computation shows that $\xi_\ast \mathcal{P}=\Pp_{x_0}$, the Markov measure on $M^\N$ with initial distribution $\delta_{x_0}$. Thus under the same assumptions in Theorem~\ref{abstractldt}, we have
$$
\mathcal{P}\left[\omega \in \Omega  :\lvert \, \frac{1}{n} \sum_{j=0}^{n-1} \varphi(\xi_j(\omega)) - \EE_\smu (\varphi)\, \rvert >\epsilon
 \, \right] \leq  e^{- c(\ep) n} \, .
$$
\end{remark}

\begin{remark}\label{strongldt} In the mixing condition~\eqref{mixing condition}, $\norm{ \cdot }_\infty$ refers to the {\em uniform} norm inherited from the space $L^\infty (M)$ of bounded, measurable functions, that is, $\norm{\varphi}_\infty := \sup_{x\in M} \abs{\varphi (x)}$. 

If we use instead the {\em essential} supremum norm relative to the stationary measure $\smu$, then an LDT estimate still holds, but only with the Markov measure  with initial distribution $\smu$ (the stationary measure), that is,
$$
\Pp_{\smu}\left\{\abs{\frac{1}{n}S_n\varphi-\int_M \varphi d\smu}>\epsilon\right\}\leq 8 e^{-c(\epsilon)n} \, .
$$

Besides having to replace $\Pp_{x_0} ,\EE_{x_0}$ by $\Pp_\smu,\EE_\smu$,  the argument is essentially the same, with one exception. Estimate~\eqref{ineq lemma} is a consequence of the bound $\Qop^{n_0}g (x_{(m-2)n_0+k}) \le \norm{\Qop^{n_0}g}_\infty$, which in this setting does not always hold. 
However, since the bound 
$$\Qop^{n_0} g (a) \le \norm{\Qop^{n_0} g}_\infty$$
holds for all points $a \in M$ except for a set $\B_{n_0}$ of $\smu$-measure zero, and since $\smu$ is $K$-stationary, hence the measure $\Pp_\smu$ on $X^+$ is $\sigma$-invariant, it follows that
\begin{align*}
\Pp_\smu \left\{ x \in X^+ \colon \exists j \in \N, x_j \in \B_{n_0}  \right\} & = \Pp_\smu \left\{ x \in X^+ \colon  \exists j \in \N, \sigma^j x \in C [\B_{n_0}]  \right\} \\
& = \Pp_\smu \left( \bigcup_{j\ge 0} \sigma^{- j} C [\B_{n_0}]  \right) \\
&  \le \sum_{j=0}^\infty \Pp_\smu \left(  \sigma^{- j} C [\B_{n_0}]  \right) = \sum_{j=0}^\infty \Pp_\smu \left( C [\B_{n_0}]  \right) \\
& = \sum_{j=0}^\infty \smu \left( \B_{n_0}  \right) = 0 \, .
\end{align*}
Therefore $\Qop^{n_0}g (x_j) \le \norm{\Qop^{n_0}g}_\infty$ for all indices $j\in\N$ holds for a $\Pp_\smu$-full measure set of $x\in X^+$, which implies the claim.

An interesting question is whether a slightly weaker version of the LDT estimate could be obtained if we replaced the  $L^\infty$-norm in the mixing condition~\eqref{mixing condition} by an $L^p$-norm. This will be considered in a separate project.
\end{remark}

\begin{remark}\label{non effective LDT remark}
If we weaken the mixing condition~\eqref{mixing condition} by requiring only that for an observable $\varphi\in L^\infty(M)$, 
 $\Qop^n \varphi \to \int_M \varphi\, d\smu$ uniformly, then we obtain the following {\em non effective} LDT estimate. For any $\ep > 0$ there are $n (\ep) \in \N$ and $c (\ep) > 0$ such that for all $x_0 \in M$ we have
$$
\Pp_{x_0}\left\{\abs{\frac{1}{n}S_n\varphi-\int_M \varphi d\smu}>\epsilon\right\}\leq 8 e^{-c(\epsilon)n} \, .
$$

The argument is the same. Assuming again that $\int \varphi d \smu = 0$, so $\norm{\Qop^{n} (\varphi)}_\infty \to 0$, the threshold $n (\ep)$ will be the first integer $n_0$ such that $\norm{\Qop^{n_0} (\varphi)}_\infty \le \frac{\ep}{4 L}$, where $L := \norm{\varphi}_{\infty}$, while $c (\ep)$ has the same expression.

Note that the parameters $n (\ep)$ and $c (\ep)$ depend in a uniform but not explicit way on the observable $\varphi$ (in other words, they do not change much as we vary $\varphi \in L^\infty (M)$, but the threshold for the limiting behavior cannot be determined from the input data).   
\end{remark}

We now recall an abstract central limit theorem of Gordin and Liv\v{s}ic (see~\cite{Gordin-L} and~\cite{Gordin2}).

\begin{theorem}[Gordin-Liv\v{s}ic]\label{abstractCLT}
	Let $(M, K, \smu)$ be an ergodic Markov system, let $\varphi \in L^2(\smu)$ with $\int \varphi d\smu=0$ and assume that
	$$
	\sum_{n=0}^{\infty}\norm{\Qop^n \varphi}_2 <\infty.
	$$
	
Denoting $\psi :=\sum_{n=0}^{\infty}\Qop^n \varphi$, we have that $\psi \in L^2(\smu)$ and $\varphi =\psi - \Qop \psi$. 
If $\sigma^2 (\varphi) :=\norm{\psi}_2^2- \norm{\Qop\psi}_2^2>0$ then the following CLT holds:
	$$
	\frac{S_n\varphi}{\sigma (\varphi)  \, \sqrt n} \stackrel{d}{\longrightarrow} \mathcal{N}(0,1) \, .
	$$
\end{theorem}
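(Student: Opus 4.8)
The plan is to prove this via Gordin's martingale approximation, reducing the CLT for $S_n\varphi$ to the classical CLT for stationary martingale difference sequences.

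First I would record the elementary assertions in the statement. Since $\sum_{n\ge 0}\norm{\Qop^n\varphi}_2<\infty$, the series $\psi=\sum_{n\ge 0}\Qop^n\varphi$ converges in $L^2(\smu)$, so $\psi\in L^2(\smu)$. The operator $\Qop$ is a contraction of $L^2(\smu)$ — by Jensen's inequality applied to the kernels $K_x$ together with the $K$-stationarity of $\smu$ — hence continuous, and therefore $\Qop\psi=\sum_{n\ge 1}\Qop^n\varphi=\psi-\varphi$, that is, $\varphi=\psi-\Qop\psi$.

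Next I would set up the martingale. Working on the Kolmogorov extension $(M^\N,\Pp_\smu)$ with the $K$-Markov chain $\{Z_n\}_{n\ge 0}$ and the filtration $\mathcal{F}_j:=\sigma(Z_0,\dots,Z_j)$, the Markov property gives $\EE_\smu[\psi(Z_{j+1})\mid\mathcal{F}_j]=\Qop\psi(Z_j)$ a.e. Defining $D_{j+1}:=\psi(Z_{j+1})-\Qop\psi(Z_j)$, the sequence $\{D_j\}_{j\ge 1}$ is a stationary martingale difference sequence for $\{\mathcal{F}_j\}$, and a short computation using the tower property and stationarity gives $\EE_\smu[D_1^2]=\norm{\psi}_2^2-\norm{\Qop\psi}_2^2=\sigma^2(\varphi)$. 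Then $\varphi(Z_j)=\psi(Z_j)-\Qop\psi(Z_j)=D_{j+1}+\big(\psi(Z_j)-\psi(Z_{j+1})\big)$, and summing over $0\le j\le n-1$ yields the decomposition
$$ S_n\varphi=M_n+\psi(Z_0)-\psi(Z_n),\qquad M_n:=\sum_{j=1}^{n}D_j. $$
Since $\norm{\psi(Z_0)-\psi(Z_n)}_2\le 2\norm{\psi}_2$, the coboundary term divided by $\sqrt n$ tends to $0$ in $L^2(\smu)$, hence in probability, so by Slutsky's theorem it suffices to prove $M_n/(\sigma\sqrt n)\stackrel{d}{\longrightarrow}\mathcal{N}(0,1)$.

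For the final step I would invoke the CLT for stationary, square-integrable, ergodic martingale difference sequences (Billingsley--Ibragimov; cf. Hall--Heyde), which gives $M_n/\sqrt n\stackrel{d}{\longrightarrow}\mathcal{N}(0,\EE_\smu[D_1^2])=\mathcal{N}(0,\sigma^2)$. The one point that genuinely needs care is identifying the limiting variance with the deterministic number $\sigma^2$: this requires ergodicity of $\{Z_n\}$ (equivalently of $\sigma$ on $(M^\N,\Pp_\smu)$), so that the conditional variance of $D_1$ with respect to the invariant $\sigma$-field equals $\EE_\smu[D_1^2]$ a.e.; in all applications in this paper the underlying Markov system is strongly mixing, hence ergodic, so this costs nothing, while without ergodicity one still obtains convergence to a variance mixture of centered Gaussians. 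Everything else — the $L^2$-convergence of $\psi$, the contraction estimate for $\Qop$, the martingale difference identity and the variance computation — is routine, so I expect no real obstacle beyond correctly invoking the martingale CLT.
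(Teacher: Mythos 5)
The paper does not actually prove this result — it is simply recalled from the cited reference of Gordin and Liv\v{s}ic, so there is no in-house proof to compare against — but your Gordin-style martingale approximation (write $\varphi=\psi-\Qop\psi$, decompose $S_n\varphi=M_n+\psi(Z_0)-\psi(Z_n)$ with $M_n$ a stationary $L^2$ martingale, kill the coboundary by Slutsky, and invoke the Billingsley--Ibragimov martingale CLT) is precisely the classical argument underlying the cited theorem, and it is correct, including the variance computation $\EE_\smu[D_1^2]=\norm{\psi}_2^2-\norm{\Qop\psi}_2^2$. The caveat you flag is the one genuine subtlety: the martingale CLT yields a deterministic limit $\mathcal{N}(0,\sigma^2)$ only when the shift on $(M^\N,\Pp_\smu)$ is ergodic (otherwise one gets a variance mixture), so ergodicity of the Markov system is tacitly assumed in the statement; in every application in this paper the system is strongly mixing, hence ergodic, so nothing is lost.
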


Recall that a Markov system $(M, K, \smu)$ is ergodic if the measure $\smu$ is an extremal point in the convex space of $K$-stationary probability measures on $M$. This is equivalent to the ergodicity of the shift map on the product space $X^+$ relative to the Markov measure $\Pp = \Pp_\smu$.
Evidently, if $K$ admits a unique stationary measure, then the corresponding Markov system is ergodic.

As a consequence of the above result we obtain the following.

\begin{proposition}\label{corclt}
	Let $(M, K, \smu, \Escr)$ be a strongly mixing Markov system (relative to the uniform norm) with mixing rate $r_n= C \frac{1}{n^p}$ with $p>1$, where $\Escr$ is a dense subset of $C_b (M)$. 
	
Assume that for any open set $U\subset M$ with $\smu (U) > 0$ there exists $\phi\in \Escr$ such that $0\leq \phi \leq \ind_U$ and $\displaystyle \int_M \phi d\smu>0$. 
For any observable $\varphi\in \Escr$, if $\varphi$ is not $\smu$-a.e. constant then~Theorem~\ref{abstractCLT}  is applicable with $\sigma^2 (\varphi) > 0$ and the CLT holds.
\end{proposition}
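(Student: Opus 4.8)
The plan is to reduce everything to the Gordin--Liv\v{s}ic theorem (Theorem~\ref{abstractCLT}) and then to show that for a non-constant observable the resulting asymptotic variance is strictly positive, using the richness hypothesis. First, since adding a constant affects neither the conclusion nor $\sigma^2(\varphi)$, and since $\varphi$ is non-constant iff $\varphi-\int_M\varphi\,d\smu$ is, I may assume $\int_M\varphi\,d\smu=0$; as $\varphi$ is continuous and $\supp\smu=M$ (see the last paragraph), ``$\varphi$ non-constant'' then means $\varphi\not\equiv 0$. By the strong mixing relative to the uniform norm with exponent $p>1$, the bound $\norm{\Qop^n\varphi}_0\le C\,\norm{\varphi}_\Escr\, n^{-p}$ is summable in $n$, so the series $\psi:=\sum_{n\ge 0}\Qop^n\varphi$ converges in $C_b(M)$; hence $\psi\in C_b(M)\subset L^2(\smu)$, $\int_M\psi\,d\smu=0$, $\norm{\Qop^n\psi}_0=\norm{\sum_{k\ge n}\Qop^k\varphi}_0\to 0$, and $\sum_n\norm{\Qop^n\varphi}_2<\infty$. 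Theorem~\ref{abstractCLT} then applies with this $\psi$: $\varphi=\psi-\Qop\psi$, and the CLT holds as soon as $\sigma^2(\varphi)=\norm{\psi}_2^2-\norm{\Qop\psi}_2^2>0$. So the whole task reduces to this strict positivity.

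Next, $\sigma^2(\varphi)\ge 0$: since $\Qop f(x)=\int_M f\,dK_x$ is an average over the probability measure $K_x$, Jensen's inequality gives $(\Qop f)^2\le\Qop(f^2)$ pointwise, and $K$-stationarity gives $\int_M\Qop(f^2)\,d\smu=\int_M f^2\,d\smu$, so $\Qop$ is an $L^2(\smu)$-contraction. Suppose, for contradiction, that $\sigma^2(\varphi)=0$, i.e. $\norm{\Qop\psi}_2=\norm{\psi}_2$. Then $\int_M[\Qop(\psi^2)-(\Qop\psi)^2]\,d\smu=0$ with nonnegative integrand, so $(\Qop\psi)^2=\Qop(\psi^2)$ $\smu$-a.e. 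This is precisely the equality case of the Cauchy--Schwarz (Jensen) inequality for the probability kernels $K_x$, so for $\smu$-a.e. $x$ the function $\psi$ is $K_x$-a.e. equal to the constant $\Qop\psi(x)$; equivalently $\psi(Z_1)=\Qop\psi(Z_0)$ $\Pp_\smu$-a.s., and (via the exact martingale--coboundary decomposition underlying Theorem~\ref{abstractCLT}, with $D_j=\psi(Z_j)-\Qop\psi(Z_{j-1})$) one gets $S_n\varphi=\psi(Z_0)-\psi(Z_n)$ $\Pp_\smu$-a.s.

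Finally, I would use the richness hypothesis to rule this out. That hypothesis forces $\supp\smu=M$ (for nonempty open $U$, $\smu(U)\ge\int_M\phi\,d\smu>0$), and, applying strong mixing to such a $\phi\in\Escr$, $\Qop^n\phi\to\int_M\phi\,d\smu>0$ uniformly, whence $K_x^n(U)\ge\Qop^n\phi(x)>0$ for every $x\in M$ and every $n\ge N_U$ --- the chain is topologically mixing. Since $\psi$ and $\Qop\psi$ are continuous, $x\mapsto K_x$ is weakly continuous, and $\supp\smu=M$, the identity ``$\psi\equiv\Qop\psi(x)$ on $\supp K_x$'' extends from $\smu$-a.e. $x$ to every $x\in M$ (lower semicontinuity of $x\mapsto\supp K_x$). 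One then argues that a non-constant continuous $\psi$ subject to this support constraint cannot coexist with topological mixing --- roughly, by tracking how the constraint propagates along $n$-step transitions and using that $K_x^n$ eventually charges every nonempty open set, one is forced to conclude that $\psi$ is constant --- and hence $\varphi=\psi-\Qop\psi\equiv 0$, contradicting that $\varphi$ is non-constant. Therefore $\sigma^2(\varphi)>0$, and the CLT follows from Theorem~\ref{abstractCLT}.

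The steps before the last one are routine bookkeeping together with the standard martingale approximation, so the main obstacle is the final implication: showing that a vanishing asymptotic variance for a non-constant $\varphi$ is incompatible with the strong mixing and (through the richness hypothesis) the topological irreducibility of the chain. This is exactly where the richness hypothesis is indispensable --- the correlation/spectral identities available for an arbitrary strongly mixing chain only give $\sigma^2(\varphi)\ge 0$, not strict positivity --- and I expect the care needed to make the propagation of the support constraints precise, and to pin down exactly how richness enters, to be the hard part.
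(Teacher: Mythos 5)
Your proof follows essentially the same route as the paper's: reduce to mean zero, use the summability coming from $p>1$ with the uniform-norm mixing to build $\psi=\sum_{n\ge 0}\Qop^n\varphi\in C_b(M)$, apply Gordin--Liv\v{s}ic, verify $\sigma^2\ge 0$ via Jensen and $K$-stationarity, and in the equality case $\sigma^2=0$ extract ``$\psi$ is $K_x$-a.e.\ constant for $\smu$-a.e.\ $x$''. Where you stop short is precisely the step you yourself flag as the hard part, and it is exactly what the paper carries out rather than waves at: (i) iterate the one-step constraint to obtain, for each $n\ge 1$ and $\smu$-a.e.\ $x$, that $\psi$ is $K_x^n$-a.e.\ constant (equal to $\Qop^n\psi(x)$); (ii) if $\psi$ is non-constant, use continuity of $\psi$ to produce two \emph{disjoint} open sets $U_1,U_2$ with $\psi(U_1)\cap\psi(U_2)=\emptyset$; (iii) invoke the richness hypothesis to get $\phi_i\in\Escr$ with $0\le\phi_i\le\ind_{U_i}$ and $\int\phi_i\,d\smu>0$, and then the uniform-norm strong mixing gives $K_x^n(U_i)\ge(\Qop^n\phi_i)(x)\to\int\phi_i\,d\smu>0$ uniformly in $x$, so for $n$ large both $U_1$ and $U_2$ carry positive $K_x^n$-mass for every $x$, contradicting the $K_x^n$-a.e.\ constancy of $\psi$ from (i). Note that this makes your detour of upgrading from $\smu$-a.e.\ $x$ to all $x$ via continuity and lower semicontinuity of $x\mapsto\supp K_x$ unnecessary; the paper argues directly at the level of $\smu$-a.e.\ $x$. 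Your aside about the martingale--coboundary identity $S_n\varphi=\psi(Z_0)-\psi(Z_n)$ is correct but is not actually used to close the argument; the contradiction comes from the $n$-step constancy versus the two-open-set trick, not from the coboundary form of $S_n\varphi$. So the proposal is on the right track and correctly identifies the structure of the proof, but it leaves the decisive contradiction as a sketch where the paper gives the explicit two-set richness argument.
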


\begin{proof} The strong mixing condition and the density of $\Escr$ in $C_b (M)$ imply the uniqueness of the $K$-stationary measure, which in turn imply the ergodicity of the Markov system. Indeed, if  $\tilde \smu$ is a $K$-stationary measure, then for any $\varphi \in C_b (M)$ we have $\int \Qop^n \varphi \, d \tilde \smu = \int \varphi \, d \tilde \smu$ for all $n \in \N$. 
By strong mixing, for any $\varphi \in \Escr$ we have that 
$\Qop^n \varphi \to \int \varphi d \smu $ uniformly. Integrating with respect to $\tilde \smu$ we conclude that
$\int \varphi \, d \tilde \smu = \int \varphi \, d \smu$ for all $\varphi \in \Escr$, so for all $\varphi \in C_b (M)$, which shows that $\tilde \smu = \smu$.

Let $\varphi\in \Escr$ be a non $\smu$-a.e. constant observable. We may of course assume that $\int \varphi d \smu =0$, otherwise we consider $\varphi - \int \varphi d \smu$. 

Let $\psi :=\sum_{n=0}^{\infty}\Qop^n \varphi$. Since $\varphi \in C_b (M)$, the strong mixing assumption on $\Qop$ implies (via the Weierstrass $M$-test) that $\psi \in C_b (M)$ as well. It remains to show that $\sigma^2 (\varphi) > 0$ which ensures the applicability of Theorem~\ref{abstractCLT}.

Assume by contradiction that $\sigma^2 (\varphi) = \norm{\psi}_2^2-\norm{\Qop\psi}_2^2= 0$.
Then
\begin{align*}
0 &\leq  \int  \left( (\Qop\psi)(x) - \psi(y) \right)^2\, dK_x(y)\, d\smu(x)\\ 
&=  \int  \left\{  ((\Qop \psi)(x))^2+ \psi(y)^2
-2\, \psi(y)\, (\Qop \psi)(x)\right\} \, dK_x(y) \, d\smu(x)\\
&= \int  \left\{  \psi(y)^2
- ((\Qop\psi)(x))^2 \right\} \, dK_x(y)\, d\smu(x)\\
&= \int   \psi(y)^2\, dK_x(y)\, d\smu(x) - \int 
((\Qop\psi)(x))^2   \,  d\smu(x)\\
&= \norm{\psi}_2^2- \norm{\Qop\psi}_2^2= 0 \quad (\text{since } \smu \text{ is } K-\text{stationary}) . 
\end{align*}

Therefore, $\psi (y) = \Qop\psi (x)$ for $\smu$-a.e. $x\in M$ and $K_x$-a.e. $y\in M$.  	
By induction we obtain that for all $n\geq 1$,
\begin{equation*}\label{psiconst}
	\psi(y)=(\Qop^n\psi)(x) \quad\text{for $\smu$-a.e. $x\in M$ and for $K_x^n$-a.e. $y\in M$},
\end{equation*}
which implies that for all $n\ge 1$ and for $\smu$-a.e. $x\in M$, the function $\psi$ is $K_x^n$-a.e. constant. Let us show that in fact $
\psi$ is $\smu$-a.e. constant. 

If $\psi$ is not $\smu$-a.e constant, then there exist two disjoint open subsets $U_1$ and $U_2$ of $M$ such that $\smu (U_1), \smu (U_2) > 0$ and $\psi|_{U_1} < \psi|_{U_2}$. By the assumption, there are two observables $\phi_1, \phi_2 \in \Escr$ such that $0\leq \phi_i \leq \ind_{U_i}$ and $\displaystyle \int \phi_i \, d\smu>0$ for $i = 1, 2$. 
Moreover, for all $x\in M$ and $n\ge1$,
$$
K_x^n (U_i) = (\Qop^n \ind_{U_i})(x)\geq (\Qop^n \phi_i)(x)\to \int \phi_i d\smu>0 ,
$$
where the above convergence as $n\to \infty$ is uniform in $x\in M$.

Thus for a large enough integer $n$ and for all $x\in M$, both sets $U_1$ and $U_2$ have positive $K_x^n$ measure.  However, $\psi|_{U_1} < \psi|_{U_2}$, which contradicts the fact that $\psi$ is $K_x^n$-a.e. constant for $\smu$-a.e. $x\in M$.

We conclude that $\psi$ is $\smu$-a.e constant. Since $\smu$ is $K$-stationary it follows that $\varphi=\psi-\Qop\psi=0$ $\smu$-a.e, which is a contradiction.
\end{proof}

We note that Theorem~\ref{abstractCLT} holds not only for the probability $\Pp = \Pp_\smu$, but also for the probability $\Pp_{x_0}$ corresponding to the Markov chain starting from $\smu$-a.e. point $x_0 \in M$ (see the comments after Definition 1.1 in~\cite{Gordin2}). Then Proposition~\ref{corclt} and all of its consequences, e.g. Theorem~\ref{CLT1 intro}  and Theorem~\ref{CLT introduction}, also hold w.r.t. these measures.


\section{Mixing measures}
\label{mixing}
Let $\mu \in \Prob(\T^d)$. We consider the Markov chain on $\T^d$:
$$
\theta \to \theta+\omega_0 \to \theta+\omega_0+\omega_1\to \cdots
$$
The corresponding Markov kernel $K$ is
$$
K_\theta= \int_{\T^d} \delta_{\theta+\omega_0}d\mu(\omega_0)
$$
which is obviously continuous with respect to the weak$^\ast$ topology.
The corresponding Markov operator is 
$$
\Qop=\Qop_\mu: L^\infty (\T^d) \to L^\infty(\T^d), \quad \Qop\varphi(\theta)=\int_{\T^d} \varphi(\theta+\omega_0)d\mu(\omega_0) \, .
$$
Note that $\Qop$ is bounded on $L^\infty(\T^d,m)$ because the Lebesgue measure $m$ is translation invariant, which also ensures that $m$ is $K$-stationary. Hence $(\T^d, K, m)$ is a Markov system.

The goal of this section is to show that the Markov operator $\Qop$ of the Markov system $(\T^d, K, m)$ is strongly mixing under certain assumptions on $\mu$ and for an appropriate space of observables, which will then allow us to derive an LDT and a CLT in this setting, via the abstract results of the previous section. These statistical properties will  be lifted to other Markov systems in Section~\ref{randtransl}.

Recall that the Fourier coefficients of a measure $\mu\in \Prob(\T^d)$ are defined as
$$
\hat{\mu}(k):= \int_{\T^d} e_k (x) \, d\mu(x),
$$
where $e_k \colon \T^d \to \C$, $e_k(x):= e^{2\pi i \langle k,x\rangle }$ for $k \in \Z^d$ are the characters of the multiplicative group $\T^d$.

\begin{lemma}
	The characters $\{e_k \colon  k\in \Z^d\}$ form a complete basis of eigenvectors for the Markov operator $\Qop \colon  L^2(\T^d) \to L^2(\T^d)$. That is, $\Qop e_k=\hat{\mu}(k) e_k, \forall\, k\in\Z^d$ and if $\varphi=\sum_{k\in \Z^d}\hat{\varphi}(k)e_k$ in $L^2(\T^d,m)$, then
	$$
	\Qop \varphi=\sum_{k\in \Z^d}\hat{\mu}(k)\hat{\varphi}(k)e_k \quad \text{in} \quad L^2(\T^d,m).
	$$
\end{lemma}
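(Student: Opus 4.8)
The plan is to proceed in three short steps, all essentially routine. \textbf{Step 1: the eigenvalue equation.} For $k \in \Z^d$ and $\theta \in \T^d$ I would compute directly, using that the characters are group homomorphisms,
$$
\Qop e_k(\theta) = \int_{\T^d} e^{2\pi i \langle k, \theta + \omega_0\rangle}\, d\mu(\omega_0) = e^{2\pi i \langle k, \theta\rangle} \int_{\T^d} e^{2\pi i \langle k, \omega_0\rangle}\, d\mu(\omega_0) = \hat{\mu}(k)\, e_k(\theta),
$$
so each character is an eigenvector of $\Qop$ with eigenvalue $\hat{\mu}(k)$. Since $\{e_k : k\in\Z^d\}$ is a complete orthonormal system in $L^2(\T^d, m)$ by classical Fourier analysis, this already establishes the assertion that the characters form a complete basis of eigenvectors for $\Qop$.

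\textbf{Step 2: $\Qop$ is bounded on $L^2(\T^d,m)$.} The operator was a priori introduced only on $L^\infty$, so before manipulating Fourier series I would check that it restricts to a bounded operator (in fact a contraction) on $L^2(\T^d, m)$. By Minkowski's integral inequality (equivalently, Jensen applied to the probability measure $\mu$) together with Fubini and the translation invariance of $m$,
$$
\norm{\Qop\varphi}_2^2 = \int_{\T^d}\babs{\int_{\T^d}\varphi(\theta+\omega_0)\,d\mu(\omega_0)}^2 dm(\theta) \le \int_{\T^d}\int_{\T^d}\abs{\varphi(\theta+\omega_0)}^2\,d\mu(\omega_0)\,dm(\theta) = \norm{\varphi}_2^2 .
$$

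\textbf{Step 3: passing to the limit.} Write $S_N := \sum_{\abs{k}\le N}\hat{\varphi}(k)\,e_k$, so that $S_N \to \varphi$ in $L^2(\T^d,m)$. By linearity and Step 1, $\Qop S_N = \sum_{\abs{k}\le N}\hat{\mu}(k)\hat{\varphi}(k)\,e_k$; and since $\abs{\hat{\mu}(k)}\le 1$ (as $\mu$ is a probability measure), the coefficient sequence $(\hat{\mu}(k)\hat{\varphi}(k))_k$ lies in $\ell^2(\Z^d)$, so $\Qop S_N$ converges in $L^2$ to $\sum_{k\in\Z^d}\hat{\mu}(k)\hat{\varphi}(k)\,e_k$. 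On the other hand, by Step 2 the operator $\Qop$ is continuous on $L^2$, hence $\Qop S_N \to \Qop\varphi$. Uniqueness of $L^2$-limits then gives $\Qop\varphi = \sum_{k\in\Z^d}\hat{\mu}(k)\hat{\varphi}(k)\,e_k$, as claimed.

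There is no real obstacle here; the only point that requires a moment's care is Step 2 — making sure the a priori $L^\infty$-operator $\Qop$ is genuinely well defined and bounded on $L^2(\T^d,m)$, which is precisely what legitimizes the term-by-term computation in Step 3.
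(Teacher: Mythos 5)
Your proof is correct and uses the same central computation as the paper: Step~1 is exactly the paper's argument, and the paper dismisses the rest with the single remark ``by the linearity of $\Qop$, it is enough to prove the first equality.'' Your Steps~2 and~3 simply make explicit the $L^2$-boundedness of $\Qop$ and the passage to the limit that the paper leaves to the reader, so there is no gap and nothing to change.
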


\begin{proof}
	By the linearity and boundedness of $\Qop$, it is enough to prove the first equality. For any $\theta\in \T^d$ and any $k\in\Z^d$, we have
	\begin{align*}
		\Qop e_k(\theta)&=\int_{\T^d} e_k(\theta+\omega_0)d\mu(\omega_0)\\
		&=\int_{\T^d} e_k(\theta)e_k(\omega_0)d\mu(\omega_0)\\
		&=e_k(\theta)\int e_k d\mu=e_k(\theta)\hat{\mu}(k).
	\end{align*}
	Thus the result follows.
\end{proof}

\begin{remark}
	It turns out (see~\cite[Theorem 2.3]{CDK-paper1}) that the mixed random-quasiperiodic system $(\Sigma^\Z\times \T^d, f, \mu^\Z \times m)$ is ergodic if and only if $\hat{\mu}(k)\neq 1, \forall\, k\in \Z^d\backslash\{0\}$. Furthermore, this is equivalent to the convergence  
	$$
	\frac{1}{n}\sum_{j=0}^{n-1}\Qop^j\varphi(\theta)\to \int \varphi dm \,\,\text{ as }\,\, n\to \infty 
	$$
for all $\theta \in \T^d$ and $\varphi \in C^0(\T^d)$.	
\end{remark}

Our goal is to provide a criterion for the {\em strong mixing} of the Markov operator $\Qop$ (determined by the measure $\mu$), a property which is strictly stronger than ergodicity.

We begin with the intermediate property of mixing.

\begin{definition}\label{def mixing}
The system	$(\Qop, m)$ is called mixing if $\forall\,\varphi\in C^0(\T^d)$, $$\Qop^n\varphi(\theta)\to \int \varphi dm \,\, \text{ as }\,\, n\to\infty, \quad\forall\, \theta \in \T^d.$$
\end{definition}

It is clear that the mixing of $(\Qop,m)$ implies the ergodicity of the measure preserving dynamical system $(f, \mu^\Z \times m)$. Moreover, as it will be seen below, if it holds, the convergence in Definition~\ref{def mixing} must be uniform in $\theta$.
Furthermore, the mixing property can be characterized as follows (see~\cite[Theorem 2.3]{CDK-paper1} for the analogue characterization of ergodicity).

\begin{theorem}\label{mixing characterizations}
	The following statements are equivalent.
	\begin{enumerate}
		\item $(\Qop_\mu, m)$ is mixing.
		\item $\abs{\hat{\mu}(k)}<1, \forall\, k\in \Z^d\backslash\{0\}$.
		\item $\forall\, k\in \Z^d\backslash\{0\}$, $\forall\,  E\subset \T^d$ with $\mu(E)=1$, $\exists\,\alpha \neq \beta \in 
		E$  so that $\langle k,\alpha-\beta\rangle \notin \Z$.
		\item The semigroup generated by the set $S := \{ \alpha - \beta \colon \alpha, \beta \in \supp (\mu) \}$ is dense in $\T^d$.
	\end{enumerate}
If any of these statements holds, we may also call the measure $\mu$ mixing.	
\end{theorem}

\begin{proof}
	$(1)\Rightarrow(2)$. If there is $k\in \Z^d\backslash\{0\}$ such that $\abs{\hat{\mu}(k)}=1$, then since $\Qop^n e_k=\hat{\mu}(k)^ne_k$ for all $n\in\N$, we have
	$$
	\abs{\Qop^n e_k}=\abs{\hat{\mu}(k)^n e_k}=1 \nrightarrow 0= \abs{\int e_k \, d m}  \,\text{ as }\, n\to\infty.
	$$
	This contradicts the mixing condition. 
	
	$(2)\Rightarrow(1)$. We first establish the convergence in Definition~\ref{def mixing} for trigonometric polynomials, then proceed by approximation. 
	
Let $p=\sum_{\abs{k} \le N} c_k e_k$ be a trigonometric polynomial. Note that $\int p dm=c_0$ and $\hat{\mu}(0)=1$, so we have
	$$
	\Qop^n p-\int pdm=\sum_{0<\abs{k}\leq N}c_k \hat{\mu}(k)^n e_k.
	$$
	Hence
	$$
	\norm{ \Qop^n p-\int pdm}_{\infty} \leq \sum_{0<\abs{k}\leq N} \abs{c_k}\abs{\hat{\mu}(k)}^n \, .
	$$
	Let $\sigma :=\max\left\{\abs{\hat{\mu}(k)} \colon 0<\abs{k}\leq N\right\}<1$. Then
	$$
	\norm{ \Qop^n p-\int p dm}_{\infty} \le \Big(\sum_{0<\abs{k}\leq N} \abs{c_k} \Big) \, \sigma^n \to 0\,\text{ as }\, n\to 0.
	$$

Given any observable $\varphi \in C^0(\T^d)$ and given $\epsilon>0$, by the Weierstrass approximation theorem there exists a trigonometric polynomial $p$ such that $\norm{\varphi-p}_{\infty} <\epsilon$. Moreover, by the previous argument, there is $n(\ep) \in \N$ such that 
$\norm{Q^np -\int p \, d m}_{\infty} <\epsilon$ for all $n \ge n (\ep)$.
	
Writing $\varphi=p+\varphi-p$, we  have 
$$
	\Qop^n\varphi=\Qop^np+\Qop^n(\varphi-p)
	$$
	and
	$$
	\int \varphi dm =\int p dm +\int (\varphi-p)dm.
	$$
	
Then for all $n \ge n (\ep)$,
	\begin{align*}
		\norm{\Qop^n \varphi -\int \varphi dm}_{\infty} & \leq \norm{\Qop^np-\int pdm}_{\infty} +\norm{\varphi-p}_{\infty} +\norm{\Qop^n(\varphi-p)}_{\infty} \\
		& \leq \epsilon+\epsilon+\epsilon=3\epsilon ,
	\end{align*}
which proves the mixing of $(\Qop, m)$ and it also shows that the convergene in Definition~\ref{def mixing} must be uniform.
	
$(2)\Leftrightarrow(3)$. Let  $k\in\Z^d\backslash\{0\}$. Then $\abs{\hat{\mu}(k)}=1$ if and only if $\abs{ \int e^{2\pi i\langle k,\alpha\rangle} \, d \mu (\alpha) } = 1$ which, by the lemma below, is equivalent to $e^{2\pi i\langle k,\alpha\rangle}$ being constant for $\mu$-a.e. $\alpha \in \Sigma$. This holds if and only if there is $E\subset \Sigma$ with $\mu(E)=1$ such that $\forall\, \alpha, \beta \in E$, $e^{2\pi i\langle k,\alpha\rangle}=e^{2\pi i\langle k,\beta\rangle}$. This is equivalent to $e^{2\pi i\langle k,\alpha - \beta \rangle}=1$ for all $\alpha, \beta \in E$, which holds if and only if $\langle k,\alpha - \beta \rangle \in \Z$ for all $\alpha, \beta \in E$, thus establishing the claim.

$(3) \, \Rightarrow\, (4)$.
The closed semigroup $H$ generated by $S$ can be written as  $H=\overline{\cup_{n\geq 1} S^n}$, where $S^n := S + S^{n-1}$.  
By the Poincar\'e recurrence theorem $H$ is also a group. Assuming by contradiction that $H\neq \T^d$,
by Pontryagin's duality for locally compact abelian groups,
there exists a non trivial character $e_k \colon \T^d\to\C$ containing $H$ in its kernel. In particular this implies that
there exists $k\in\Z^d\setminus\{0\}$ such that
$\langle k, \theta \rangle\in\Z$ for all $\theta \in S$, which is a contradiction. 

$(4) \, \Rightarrow\, (3)$.
Assume by contradiction that for some $k\in\Z^d\setminus\{0\}$ and $E\subset\T^d$, with full $\mu$-measure,
we have $\langle k, \beta-\alpha \rangle\in\Z$ or equivalently $e^{2\pi i \langle k, \beta-\alpha \rangle}=1$,
 for all $\alpha, \beta  \in E$. 
Because $E$ is dense in $\supp(\mu)$, this implies by continuity that $e^{2\pi i \langle k, \theta \rangle}=1$
 for all $\theta\in S$. Then $e_k$ is a nontrivial character
of $\T^d$ and $H:=\{ \theta\in\T^d\colon e_k(\theta)=1 \}$
is a proper compact subgroup of $\T^d$.
The assumption implies that $S\subset H$, hence the closed semigroup generated by $S$ is contained in $H$, a contradiction with (4).
\end{proof}

\begin{lemma}\label{lemarg}
	Let $(\Omega,\rho)$ be a probability space. Assume that $f \colon \Omega\to \C$ is Lebesgue integrable. If $\abs{\int_\Omega f d\rho}=\int_\Omega\abs{f}d\rho$, then $\arg f$ is constant $\rho$-a.e. That is, $\exists\, \theta_0\in \R$ such that $f(x)=e^{i\theta_0}\abs{f(x)}$ for $\rho$-a.e. $x\in \Omega$. 
\end{lemma}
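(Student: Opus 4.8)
The plan is to treat this as the equality case of the triangle inequality $\abs{\int f\,d\rho}\le \int \abs{f}\,d\rho$ for complex-valued integrands. The degenerate case comes first: if $\int_\Omega f\,d\rho=0$, then the hypothesis forces $\int_\Omega\abs{f}\,d\rho=0$, hence $f=0$ $\rho$-a.e., and the conclusion holds with any choice of $\theta_0$.

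Assuming now $I:=\int_\Omega f\,d\rho\neq 0$, I would write $I=\abs{I}\,e^{i\theta_0}$ for a suitable $\theta_0\in\R$ and rotate: multiplying through by $e^{-i\theta_0}$ gives $\abs{I}=\int_\Omega e^{-i\theta_0}f\,d\rho$. Since the left-hand side is real, taking real parts yields $\abs{I}=\int_\Omega \Re\!\left(e^{-i\theta_0}f\right)d\rho$. Combining this with the hypothesis $\abs{I}=\int_\Omega\abs{f}\,d\rho$ and the pointwise bound $\Re\!\left(e^{-i\theta_0}f(x)\right)\le \abs{e^{-i\theta_0}f(x)}=\abs{f(x)}$, I obtain
$$
\int_\Omega\left(\abs{f(x)}-\Re\!\left(e^{-i\theta_0}f(x)\right)\right)d\rho(x)=0,
$$
where the integrand is nonnegative. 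Hence the integrand vanishes $\rho$-a.e., i.e. $\Re\!\left(e^{-i\theta_0}f(x)\right)=\abs{e^{-i\theta_0}f(x)}$ for $\rho$-a.e. $x$.

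The final step is the elementary observation that a complex number $z$ satisfies $\Re(z)=\abs{z}$ if and only if $z$ is real and nonnegative. Applying this to $z=e^{-i\theta_0}f(x)$ gives $e^{-i\theta_0}f(x)\ge 0$ for $\rho$-a.e. $x$, which is exactly $f(x)=e^{i\theta_0}\abs{f(x)}$ $\rho$-a.e., as claimed. There is no real obstacle here; the only point requiring a (one-line) justification is that the integral of a nonnegative function vanishing forces the function to be zero a.e., together with the characterization of the equality $\Re(z)=\abs z$.
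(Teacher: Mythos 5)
Your proof is correct and takes essentially the same route as the paper's: set $\theta_0=\arg\!\left(\int f\,d\rho\right)$, rotate by $e^{-i\theta_0}$, take real parts, use the pointwise bound $\Re(e^{-i\theta_0}f)\le\abs{f}$ together with the hypothesis to conclude the nonnegative integrand vanishes a.e., and finish with the characterization $\Re(z)=\abs{z}\iff z\ge 0$. The only (minor) difference is that you explicitly treat the degenerate case $\int f\,d\rho=0$ separately, which the paper silently passes over; this is a small but worthwhile tidying.
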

\begin{proof}
Let $\theta_0:=\arg(\int_\Omega f d\rho)$, so we can write 
	$$
	\int_\Omega f d\rho=e^{i\theta_0}\abs{\int_\Omega f d\rho}.
	$$
	
	Then
	\begin{align*}
		0=& \abs{\int_\Omega f d\rho}-\int_\Omega \abs{f} d\rho 
		= e^{-i\theta_0}\int_\Omega f d\rho-\int_\Omega \abs{f}d\rho\\
		=& \int_\Omega \left( e^{-i\theta_0}f-\abs{e^{-i\theta_0}f} \right) d\rho
		= \Re \int_\Omega \left(e^{-i\theta_0}f-\abs{e^{-i\theta_0}f} \right) d\rho\\
		=& \int_\Omega \left( \Re (e^{-i\theta_0}f)-\abs{e^{-i\theta_0}f} \right) d\rho \, .
	\end{align*}
	
Since $\Re (e^{-i\theta_0}f) \le \abs{e^{-i\theta_0}f}$, it follows that $\Re (e^{-i\theta_0}f)=\abs{e^{-i\theta_0}f}\geq 0$, $\rho$-a.e. In particular, $\Im(e^{-i\theta_0}f)=0$ $\rho$-a.e. Therefore,
	$$
	e^{-i\theta_0}f=\Re (e^{-i\theta_0}f)=\abs{e^{-i\theta_0}f}=\abs{f} \quad \rho\text{-a.e.}
	$$
	which implies $f=e^{i\theta_0}\abs{f}$ $\rho$-a.e.
\end{proof}

As shown above, the system $(\Qop, m)$ is mixing if and only if $\abs{\hat{\mu}(k)}<1$ for all  $k\in \Z^d\backslash\{0\}$. 
It is not difficult to see that if $\mu \ll m$ then there is $\sigma \in (0, 1)$ such that $\abs{\hat{\mu}(k)} \le \sigma < 1$ for all  $k\in \Z^d\backslash\{0\}$.
This ensures the mixing and in fact the strong mixing with exponential rate of the system $(\Qop, m)$, but it is, of course, a very restrictive assumption.
We introduce a general, in fact generic condition on the measure $\mu$ that allows its Fourier coefficients to approach $1$ but with a controlled speed, and which will imply the strong mixing of $(\Qop, m)$.

\begin{definition}
	We say that $\mu\in \Prob(\T^d)$ satisfies a mixing Diophantine condition (mixing DC) if
	$$
	\abs{\hat{\mu}(k)}\leq 1-\frac{\gamma}{\abs{k}^\tau}, \,\forall\,k \in \Z^d\backslash\{0\},
	$$
	for some $\gamma, \tau>0$. In this case we write  $\mu\in \rm{DC}(\gamma,\tau)$.
\end{definition}

This is inspired by the concept of Diophantine condition (DC) for points on the torus. We say that $\alpha\in \T^d$ satisfies the Diophantine condition DC$(\gamma,\tau)$ if
$$
\inf_{j\in \Z} \abs{\langle k,\alpha \rangle-j}\geq \frac{\gamma}{\abs{k}^\tau},\, \forall\, k\in \Z^d\backslash\{0\}.
$$

It is usually assumed when talking about a DC for points on the torus that $\gamma>0$ and $\tau>d$. This is because when $\tau<d$, the set of points satisfying DC$(\gamma,\tau)$  is empty, when $\tau=d$ it has Lebesgue measure zero on $\T^d$, while when $\tau > d$, the set $\cup_{\ga>0} \rm{DC} (\gamma,\tau)$ has Lebesgue measure one.

However, the set of measures on the torus satisfying a mixing Diophantine condition is always non-empty, for any parameters $\gamma, \tau>0$.

We give some examples of mixing and non mixing DC measures. 

\begin{enumerate}
	\item If $\mu \ll m$, then $\mu$ is mixing DC with any $\tau\geq 0$.
	\item If $\mu=\delta_\alpha$ then $\hat{\mu}(k)=\int e^{2\pi i\langle k,x\rangle}d\delta_\alpha(x)=e^{2\pi i\langle k,\alpha\rangle}$. Thus $\abs{\hat{\mu}(k)} = 1  \, \forall\, k\in \Z$ which implies that $\delta_\alpha$ is not mixing DC.
	\item If $\mu=t\delta_\alpha + (1-t)\delta_\beta$ with $t\in(0,1)$ and $\beta-\alpha\in $ DC, then $\mu$ is mixing DC.
	\item If $\mu\in \Prob(\T^d)$ is finitely supported and $\exists\, \alpha,\beta\in \supp(\mu)$ such that $\beta-\alpha\in$ DC, then $\mu$ is mixing DC.
	\item If $\mu_1\in \Prob(\T^d)$ is mixing DC then for any $t\in (0,1]$ and $\mu_2\in \Prob(\T^d)$, $\mu:=t\mu_1+(1-t)\mu_2$ is mixing DC.
\end{enumerate}

Note that $(5)$ implies that the set of mixing DC measures is prevalent.

\smallskip

The following result shows that the mixing DC of a measure $\mu$ ensures the strong mixing of the corresponding Markov system $(\Qop, m)$.

\begin{proposition}
	If $\mu\in \rm{DC}(\gamma,\tau)$ then $\Qop$ is strongly mixing with power rate on any space of $\alpha$-H\"older continuous functions $C^\alpha(\T^d)$. More precisely, there are $C<\infty$ and $p>0$ such that
	$$ 
	\bnorm{\Qop^n\varphi -\int \varphi dm}_{\infty} \leq C \norm{\varphi}_\alpha \frac{1}{n^p}  \quad \forall \, \varphi \in C^\alpha(\T^d), n \ge 1.
	$$
	In fact, $p$ can be chosen  $\frac{\alpha}{\tau} -\iota$, for any $\iota > 0$, in which case $C$ will depend on $\iota$.
\end{proposition}

\begin{proof}
Fix an observable $\varphi \in C^\alpha(\T^d)$. The trick for obtaining a sharp rate of convergence is to approximate $\varphi$ by trigonometric polynomials, with an error bound (and algebraic complexity) correlated to the number of iterates of the Markov operator. 

Fix $n$ to be this number of iterates. Let $N$ be the degree of approxi\-mation which will be chosen later. Since $\varphi \in C^\alpha(\T^d)$, by Jackson's approximation theorem (see~\cite[Section 2.2 and Section 3]{KLM} and references therein for more details) there exists a trigonometric polynomial $p_N$, with  $\deg p_N\leq N$, such that for some universal constant $C_0 < \infty$, 
$$\norm{\varphi-p_N}_{\infty} \le C_0 \,  \norm{\varphi}_\alpha \, \frac{1}{N^\alpha} \, . $$

In fact $p_N$ is the convolution of $\varphi$ with the Jackson kernel, so 
$$p_N=\sum_{\abs{k} \le N}c_k e_k$$ where the coefficients $c_k$ satisfy
	$$
	\abs{c_k}\leq\abs{\hat{\varphi}(k)}\le \norm{\varphi}_\alpha  \, .
	$$
	
We can then write $$\varphi=p_N +(\varphi-p_N)=: p_N+r_N $$ 
and by linearity we have
	$$
	\Qop^n\varphi=\Qop^np_N+\Qop^nr_N
	$$
	and
	$$
	\int \varphi dm=\int p_N dm +\int r_N dm \, .
	$$
	Thus
	$$
	\Qop^n\varphi -\int \varphi dm=\Qop^np_N-\int p_N dm+\Qop^nr_N-\int r_Ndm,
	$$
	which shows that
	$$
	\bnorm{\Qop^n\varphi -\int \varphi dm}_{\infty} \leq \bnorm{\Qop^np_N-\int p_N dm}_{\infty} +\bnorm{\Qop^nr_N}_{\infty}+\int\abs{r_N}dm.
	$$
	
	Due to the bound on $r_N = \varphi - p_N$ and the fact that $\Qop$ is a bounded operator with norm $1$ on $C^0 (\T^d)$, the second and third term on the right-hand side above are smaller than $C_0 \norm{\varphi}_\alpha \frac{1}{N^\alpha}$. It remains to estimate the first term.
	Since $p_N=\sum_{\abs{k}\leq N}c_ke_k$,
	$$
	\Qop^np_N=  \sum_{\abs{k}\leq N} c_k \, \Qop^n e_k =  \sum_{\abs{k}\leq N} c_k \,  \hat{\mu}(k)^n \, e_k.
	$$

This implies
\begin{align*}	
	\bnorm{Q^np_N-\int p_N dm}_{\infty} & \le \sum_{0<\abs{k}\leq N}\abs{c_k}\abs{\hat{\mu}(k)}^n\le \norm{\varphi}_\alpha\sum_{0<\abs{k}\leq N}(1-\frac{\gamma}{\abs{k}^\tau})^n \\
	& \le \norm{\varphi}_\alpha (2 N)^d \, (1-\frac{\gamma}{N^\tau})^n \, .
\end{align*}
	
Using the inequality $(1-x)^{\frac{1}{x}}\leq e^{-1}$ for  $x \in (0, 1)$, we have (for $N$ large enough)
	$$
 (1-\frac{\gamma}{N^\tau})^n \leq e^{-\frac{n\gamma}{N^\tau}} \, .
	$$

Combining the above estimates we obtain
$$\bnorm{\Qop^n\varphi -\int \varphi dm}_{\infty} \le 2^d \, \norm{\varphi}_\alpha \, N^d \, e^{-\frac{n\gamma}{N^\tau}} + 2 C_0 \,  \norm{\varphi}_\alpha \, \frac{1}{N^\alpha} \, .$$

Fix any $\ep > 0$ and choose $N :=(n\gamma)^{\frac{1-\ep}{\tau}}$. Then
$$\frac{1}{N^\alpha}  = \frac{1}{\gamma^{(\alpha/\tau) \, (1-\ep)}} \,  \frac{1}{n^{(\alpha/\tau) \, (1-\ep)}} =: C_1 \, \frac{1}{n^p} \, ,$$
where $p := \frac{\alpha}{\tau} \, (1-\ep) = \frac{\alpha}{\tau} - o (1)$,
while	
$$N^d \, e^{-\frac{n\gamma}{N^\tau}} = (n\gamma)^{\frac{(1-\ep) \, d}{\tau}} \, e^{- (n \gamma)^\ep} \ll \frac{1}{n^p} $$
for $n$ large enough.
This completes the proof provided the constant $C$ is chosen large enough depending on $\alpha, \gamma, \tau, d$ and $\ep$.
\end{proof}


\section{Mixed random-quasiperiodic dynamical systems}
\label{randtransl}

In this section we introduce a type of partially hyperbolic dynamical system for which the abstract statistical properties from Section~\ref{aldts} are (eventually) applicable and which moreover does not belong to the classes of non uniformly hyperbolic systems considered in~\cite{Chazottes-concentration, Melbourne-Nicol, Melbourne-PAMS, Alves-Freitas-L-V}.

Let $\Sigma:=\T^d$ where $\mu\in \Prob(\T^d)$ and regard $(\Sigma, \mu)$ as a probability space of symbols. Consider $X := \Sigma^\Z$, $\mu^\Z \in \Prob(X)$ the product space with the product measure and let $\sigma \colon X \to X$ be the bilateral Bernoulli shift. Define the skew-product map
$$
f \colon X\times \T^d\to X \times \T^d, \quad f(\omega,\theta) =(\sigma \omega, \theta+\omega_0) \, .
$$
We call the MPDS $(X\times \T^d, f, \mu^\Z\times m)$ a mixed random-quasiperiodic dynamical system (see~\cite{CDK-paper1} for more details). Next we define a general space of observables.

\begin{definition}\label{Holder space}
Given $\alpha \in (0, 1]$, $\Hscr_\alpha(X\times \T^d)$ is the set of observables $\varphi \colon X \times \T^d \to \R$ such that $v_\alpha(\varphi)=v_\alpha^X(\varphi)+v_\alpha^{\T^d}(\varphi)<\infty$, where
$$
v_\alpha^X(\varphi):=\sup_{\theta\in \T^d}\sup_{\omega\neq \omega'}\frac{\abs{\varphi(\omega,\theta)-\varphi(\omega',\theta)}}{d(\omega,\omega')^\alpha}
$$
and
$$
v_\alpha^{\T^d}(\varphi):=\sup_{\omega\in X}\sup_{\theta\neq \theta'}\frac{\abs{\varphi(\omega,\theta)-\varphi(\omega,\theta')}}{\abs{\theta-\theta'}^\alpha}.
$$
Note that $v_\alpha$ is a semi-norm. Endowed with the norm
$$
\norm{\varphi}_\alpha:=v_\alpha(\varphi)+\norm{\varphi}_\infty
$$
the set $\Hscr_\alpha(X\times \T^d)$ of observables is then a Banach space.
\end{definition}

\begin{remark}
As usual, the metric $d$ on $X$ is given by  
$
d(\omega,\omega'):=2^{-\min\{\abs{j} \colon \omega_j\neq\omega'_j\}}$ for  $\omega,\omega'\in X$.
Note that in general this metric does not make $(X,d)$ a compact metric space unless $\mu$ is finitely supported. This is essentially due to the fact  that this metric only accounts for where two points in $X$ differ, without telling by how much they differ, thus it does not (in general) metrize the product topology. However, the space of observables $\Hscr_\alpha(X\times \T^d)$ defined above relative to this metric contains the space of $\alpha$-H\"older continuous observables on $X\times \T^d$ with respect to the standard (compact) metric.
\end{remark}

The Markov chain on $X\times\T^d$
$$
(\omega, \theta)\to (\sigma\omega,\theta+\omega_0)\to(\sigma^2\omega,\theta+\omega_0+\omega_1)\to \cdots
$$
is evidently not strongly mixing because it is deterministic, hence we cannot derive LDT estimates and a CLT directly, via the two abstract theorems in Section~\ref{aldts}. 
We will proceed through three different levels of generality regarding the observables considered.

\subsection{First level: locally constant observables}
Let $\mu\in \Prob(\T^d)$. Let $K \colon \T^d \to \Prob(\T^d), K_{\theta}=\int \delta_{\theta+\omega_0} d\mu(\omega_0)$ be a Markov kernel and let
$\Qop \colon C^0(\T^d)\to C^0(\T^d), \Qop \varphi(\theta)=\int \varphi(\theta+\omega_0)d\mu(\omega_0)$ be the corresponding Markov operator. Finally, let $Z_0=\theta, Z_j=\theta+\omega_0+\cdots+\omega_{j-1}, j \ge 1$ be the corresponding $K$-Markov chain on $\T^d$.

We proved that if $\mu$ satisfies a mixing DC then $(\T^d, K, m, C^\alpha(\T^d))$ is a strongly mixing Markov system with decaying rate $r_n=\frac{1}{n^p}$, $n \ge 1$ and $p=\frac{\alpha}{\tau} -$. 
By the abstract LDT Theorem \ref{abstractldt} and Remark \ref{strongldt}, we obtain the following effective LDT estimate in this setting.

\begin{theorem}
	Assume that $\mu\in \rm{DC}(\gamma,\tau)$. Then $\forall\, \varphi \in C^\alpha(\T^d)$, $\forall\, \theta\in \T^d$ and $\forall\, \epsilon>0$, there is $n (\ep) \in \N$ such that $\forall n \ge n (\ep)$ we have
	$$
	\mu^\N\left\{\abs{\frac{1}{n}S_n\varphi - \int \varphi dm}>\epsilon\right\}< e^{-c(\epsilon)n}
	$$
where  $c(\epsilon) = \cldt \, \epsilon^{2+\frac{1}{p}}$, $n (\ep) = \nldt \, \ep^{- \frac{1}{p}}$ for constants $\cldt > 0$ and $\nldt \in \N$ which depend explicitly and uniformly on the data and $p = \frac{\alpha}{\tau} -$.
\end{theorem}

It is straightforward to check that for $(\T^d, K, m, C^\alpha(\T^d))$, the assumptions of Proposition~\ref{corclt} are satisfied via Lemma 2.2 of \cite{CDK-paper1} and thus we obtain the following.
\begin{theorem}
	Assume that $\mu\in \rm{DC}(\gamma,\tau)$ and let $\alpha>\tau$. Then for every $\varphi \in C^\alpha(\T^d)$ nonzero with zero mean, there exists $\sigma = \sigma(\varphi)>0$ such that
	$$
	\frac{S_n\varphi}{\sigma \, \sqrt n} \stackrel{d}{\longrightarrow} \mathcal{N}(0,1) \, .
	$$

\end{theorem}

	
\medskip

Next we extend to a slightly more general setup. On $\Sigma \times\T^d$ consider the Markov kernel $\bar K \colon \Sigma \times\T^d \to \Prob (\Sigma \times\T^d)$, 
$$
\bar{K}_{(\omega_0,\theta)}:=\int \delta_{(\omega_1, \theta+\omega_0)}d\mu(\omega_1)
$$
and the corresponding Markov operator $\bar{\Qop}: C^0(\Sigma\times \T^d)\to C^0(\Sigma\times \T^d)$
$$
\bar{\Qop}\varphi(\omega_0,\theta)=\int \varphi(\omega_1,\theta+\omega_0)d\mu(\omega_1).
$$

The corresponding $\bar{K}$-Markov chain is 
$$
Z_0=(\omega_0,\theta)\to Z_1=(\omega_1, \theta+\omega_0)\to Z_2=(\omega_2,\theta+\omega_0+\omega_1)\to \cdots
$$

Define $\Pi: C^0(\Sigma \times \T^d)\to C^0(\T^d)$, $\Pi\varphi(\theta)=\int \varphi(\omega_0,\theta)d\mu(\omega_0)$. It is clear that $\bar{\Qop}\varphi(\omega_0,\theta)=\Pi\varphi(\theta+\omega_0)$. By induction, $$\bar{\Qop}^n\varphi(\omega_0,\theta)=\Qop^{n-1}(\Pi\varphi)(\theta+\omega_0).$$

Define the space $\Hscr_{0,\alpha}(\Sigma \times \T^d))$ as follows:
$$
\Hscr_{0,\alpha}(\Sigma \times \T^d)):=\left\{\varphi \in C^0(\Sigma \times \T^d): v_\alpha^{\T^d}(\varphi)<\infty\right\}
$$
where
$$
v_\alpha^{\T^d}(\varphi):=\sup_{\omega_0\in \Sigma}\sup_{\theta\neq \theta'}\frac{\abs{\varphi(\omega_0,\theta)-\varphi(\omega_0,\theta')}}{\abs{\theta-\theta'}^\alpha}.
$$
The corresponding $\alpha$-norm is defined by $\norm{\varphi}_\alpha=\norm{\varphi}_\infty+v_\alpha^{\T^d}(\varphi)$.
Then it is straightforward to check that $(\Sigma\times \T^d, \bar{K}, \mu \times m, \Hscr_{0,\alpha}(\Sigma \times \T^d))$ is a Markov system. Since $\Qop$ is strongly mixing on $C^\alpha(\T^d)$, then $\bar{\Qop}$ is strongly mixing on $\Hscr_{0,\alpha}(\Sigma \times \T^d)$ with the same decaying rate $r_n=\frac{1}{n^p}$ because $\Pi \Hscr_{0,\alpha}(\Sigma \times \T^d)\subset C^\alpha(\T^d)$. We then get the following LDT estimate and, if $\alpha>\tau$, CLT. 

\begin{theorem}
	Assume that $\mu\in \rm{DC}(\gamma,\tau)$. Then $\forall\, \varphi \in \Hscr_{0,\alpha}(\Sigma\times\T^d)$, $\forall\, (\omega_0,\theta)\in \Sigma\times \T^d$ and $\forall\, \epsilon>0$ there is $n (\ep) \in \N$ such that $\forall n \ge n (\ep)$ we have
	$$
	\Pp_{(\omega_0,\theta)}\left\{\abs{\frac{1}{n}S_n\varphi-\int \varphi d\mu\times m}>\epsilon\right\}<e^{-c(\epsilon)n}
	$$
where $c(\epsilon) = \cldt \, \epsilon^{2+\frac{1}{p}}$, $n (\ep) = \nldt \, \ep^{- \frac{1}{p}}$ and $p = \frac{\alpha}{\tau}-$.
\end{theorem}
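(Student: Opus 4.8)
The plan is to transfer the one–variable large deviations estimate to the product system via the factorization $\bar\Qop^n\varphi(\omega_0,\theta)=\Qop^{n-1}(\Pi\varphi)(\theta+\omega_0)$ recorded just before the statement, and then to invoke Theorem~\ref{abstractldt} in the sharpened form of Remark~\ref{strongldt} — the version phrased with the uniform norm, which yields the deviation bound for \emph{every} initial state rather than for $\mu\times m$-a.e.\ one. Concretely I would apply that theorem to the Markov system $(\Sigma\times\T^d,\bar K,\mu\times m,\Hscr_{0,\alpha}(\Sigma\times\T^d))$, so the only genuinely new point is to check that $\bar\Qop$ is strongly mixing on $\Hscr_{0,\alpha}(\Sigma\times\T^d)$ with power rate relative to $\norm{\cdot}_{C^0}$.

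First I would verify that $(\Sigma\times\T^d,\bar K,\mu\times m,\Hscr_{0,\alpha})$ is a Markov system in the sense of Definition~\ref{marsys}: continuity of $\bar K$ is immediate; $\mu\times m$ is $\bar K$-stationary because unfolding $\int \bar K_{(\omega_0,\theta)}(E)\,d(\mu\times m)(\omega_0,\theta)$ and using translation invariance of $m$ in the $\theta$ variable recovers $(\mu\times m)(E)$; the inclusion $\Hscr_{0,\alpha}\hookrightarrow L^\infty$ is bounded by the definition of $\norm{\cdot}_\alpha$; and $\norm{\bar\Qop\varphi}_\alpha\le\norm{\varphi}_\alpha$ since $\bar\Qop\varphi(\omega_0,\theta)=\Pi\varphi(\theta+\omega_0)$ is a translate of $\Pi\varphi$. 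For the mixing estimate, note that $\int\varphi\,d(\mu\times m)=\int_{\T^d}\Pi\varphi\,dm$ by Fubini, so the factorization gives, for every $(\omega_0,\theta)$ and every $n\ge 1$,
$$
\abs{\bar\Qop^n\varphi(\omega_0,\theta)-\int\varphi\,d(\mu\times m)}=\abs{\Qop^{n-1}(\Pi\varphi)(\theta+\omega_0)-\int\Pi\varphi\,dm}\le\norm{\Qop^{n-1}(\Pi\varphi)-\int\Pi\varphi\,dm}_{C^0}.
$$
Since $\Pi$ maps $\Hscr_{0,\alpha}(\Sigma\times\T^d)$ into $\Hscr_\alpha(\T^d)$ with $\norm{\Pi\varphi}_\alpha\le\norm{\varphi}_\alpha$ — one pulls the Hölder difference quotient under the integral sign — the strong-mixing proposition of Section~\ref{mixing} bounds the right side by $C\norm{\varphi}_\alpha(n-1)^{-p}$ with $p\nearrow\frac{\alpha}{\tau}$; absorbing the index shift (e.g.\ $(n-1)^{-p}\le 2^p n^{-p}$ for $n\ge 2$, with $n=1$ handled trivially) yields $\norm{\bar\Qop^n\varphi-\int\varphi\,d(\mu\times m)}_{C^0}\le C\norm{\varphi}_\alpha n^{-p}$ for all $n\ge 1$.

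With this sup-norm strong mixing in hand, Remark~\ref{strongldt} applied to Theorem~\ref{abstractldt} gives, for all $(\omega_0,\theta)\in\Sigma\times\T^d$ and $\epsilon>0$ and all $n$ past the explicit threshold,
$$
\Pp_{(\omega_0,\theta)}\left\{\abs{\frac{1}{n}S_n\varphi-\int\varphi\,d(\mu\times m)}>\epsilon\right\}\le 8\,e^{-c(\epsilon)n},\qquad c(\epsilon)=C\,\epsilon^{2+\frac{1}{p}};
$$
enlarging the threshold slightly absorbs the prefactor $8$ into the exponent, giving the stated bound $e^{-c(\epsilon)n}$, and letting the auxiliary parameter in the mixing proposition tend to $1$ gives $p\nearrow\frac{\alpha}{\tau}$. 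I do not expect a serious obstacle: the whole argument is a transfer through the intertwining $\bar\Qop^n=(\text{translation})\circ\Qop^{n-1}\circ\Pi$. The two points deserving care are (i) that $\Pi$ does not inflate the relevant Hölder norm — this uses the precise shape of the seminorm defining $\Hscr_{0,\alpha}$ — and (ii) the harmless shift $n\mapsto n-1$ in the mixing rate, which only changes the implied constant. It is essential that the mixing be phrased with the uniform norm rather than $L^\infty(\mu\times m)$, so that the deviation bound holds for every, not merely $\mu\times m$-a.e., initial condition $(\omega_0,\theta)$; this is legitimate precisely because the bound on $\bar\Qop^n\varphi$ above is a genuinely pointwise estimate.
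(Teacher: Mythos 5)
Your proposal is correct and follows essentially the same route as the paper: both use the intertwining $\bar\Qop^n\varphi(\omega_0,\theta)=\Qop^{n-1}(\Pi\varphi)(\theta+\omega_0)$, the fact that $\Pi$ maps $\Hscr_{0,\alpha}(\Sigma\times\T^d)$ into $\Hscr_\alpha(\T^d)$ without inflating the norm, the strong-mixing proposition for $\Qop$, and then Theorem~\ref{abstractldt} in the sup-norm form of Remark~\ref{strongldt} to get the pointwise bound. You fill in a few details the paper leaves tacit (the $\bar K$-stationarity computation, the $n\mapsto n-1$ shift, absorbing the prefactor $8$), but there is no substantive difference in the argument.
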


Again, for $(\Sigma\times \T^d, \bar{K}, \mu \times m, \Hscr_{0,\alpha}(\Sigma \times \T^d))$ the conditions of Proposition~\ref{corclt} are fulfilled by using Lemma 2.2 in \cite{CDK-paper1}, and we obtain the following CLT.

\begin{theorem}
	Assume that $\mu\in \rm{DC}(\gamma,\tau)$ with $\alpha>\tau$. Then $\forall\, \varphi \in \Hscr_{0,\alpha}(\Sigma\times\T^d)$ with zero mean and nonzero $L^1 (\mu \times m)$-norm, there exists $\sigma = \sigma(\varphi)>0$ such that
	$$
	\frac{S_n\varphi}{\sigma \, \sqrt n} \stackrel{d}{\longrightarrow} \mathcal{N}(0,1) \, .
	$$
\end{theorem}

\medskip
 
\subsection{Second level: future independent observables}
Let $X^-:=\Sigma^{-\N}=\left\{ \omega^-=\{\omega_j\}_{j\leq 0}: \omega_j \in \Sigma\right\}$ endowed with the distance $d$ defined above and  denote by $\mu^{-\N}$ the product measure on $X^-$.
The Markov kernel $K^-$ on $X^-\times \T^d$ is defined by
$$
K^-_{(\omega^-,\theta)}=\int \delta_{(\omega^-\omega_1, \theta+\omega_0)}d\mu(\omega_1)
$$ 
and the corresponding Markov operator $\Qop^-$ on $C^0(X^-\times \T^d)$ is 
$$
\Qop^-\varphi(\omega^-,\theta)=\int \varphi(\omega^-\omega_1,\theta+\omega_0)d\mu(\omega_1).
$$
The associated Markov chain is 
$$
(\omega^-,\theta)\to (\omega^-\omega_1,\theta+\omega_0)\to(\omega^-\omega_1\omega_2,\theta+\omega_0+\omega_1)\to \cdots
$$

The corresponding space of $\alpha$-H\"older observables, denoted by $\Hscr_\alpha(X^-\times \T^d)$, is defined as
$$
\Hscr_\alpha(X^-\times \T^d):=\left\{\varphi\in C^0(X^-\times \T^d) \colon v_\alpha(\varphi):= v_\alpha^{X^-}(\varphi)+v_\alpha^{\T^d}(\varphi)<\infty\right\}
$$
where
$$
v_\alpha^{X^-}(\varphi):=\sup_{\theta\in \T^d}\sup_{\omega^-\neq \omega'^-}\frac{\abs{\varphi(\omega^-,\theta)-\varphi(\omega'^-,\theta)}}{d(\omega^-,\omega'^-)^\alpha} 
$$
and
$$
v_\alpha^{\T^d}(\varphi):=\sup_{\omega^-\in X^-}\sup_{\theta\neq \theta'}\frac{\abs{\varphi(\omega^-,\theta)-\varphi(\omega^-,\theta')}}{\abs{\theta-\theta'}^\alpha} \, .
$$
Endowed with the norm  $\norm{\varphi}_\alpha:=\norm{\varphi}_\infty+v_\alpha(\varphi)$, $\Hscr_\alpha(X^-\times \T^d)$ is a Banach space.

It is not difficult to verify that $(X^-\times \T^d, K^-, \mu^{-\N} \times m, \Hscr_\alpha(X^-\times \T^d))$ is a Markov system.

\subsection*{Contracting factors}
Our goal is now to show that the observed SDS $(X^-\times \T^d, K^-, \mu^{-\N} \times m, \Hscr_\alpha(X^-\times \T^d))$ is strongly mixing with rate $r_n = \frac{1}{n^p},\, p=\frac{\alpha}{\tau}-$. We have already shown that $(\Sigma\times \T^d,\bar{K}, \mu\times m, \Hscr_{0, \alpha} (\Sigma\times \T^d))$ is strongly mixing with rate $r_n = \frac{1}{n^p}, \,p=\frac{\alpha}{\tau}-$. To this end we will prove that $(\Sigma\times \T^d,\bar{K}, \mu\times m, \Hscr_{0, \alpha} (\Sigma\times \T^d))$ is a contracting factor (see~Definition~\ref{confac}) of $(X^-\times \T^d, K^-, \mu^{-\N} \times m, \Hscr_\alpha(X^-\times \T^d))$, which will allow us to lift the strong mixing property from $\bar{K}$ to $K^-$ (see Theorem~\ref{liftsm}). 

We first introduce the definition of a factor.

\begin{definition}\label{factor}
	Given two Markov systems $(M, K,\mu,\Escr)$, $(\tilde{M},\tilde{K},\tilde{\mu},\tilde{\Escr})$, the first is called a factor of the second if there exists a continuous projection $\pi \colon \tilde{M} \to M$ such that denoting by $\pi_\ast \colon \Prob (\tilde{M}) \to \Prob (M)$ the push-forward operator $\pi_\ast \tilde{\nu} := \tilde{\nu} \circ \pi^{-1}$ and by $\pi^\ast \colon C^0 (M) \to C^0 (\tilde{M})$ the pull-back operator $\pi^\ast (\varphi) := \varphi \circ \pi$, the following hold:
	\begin{enumerate}
		\item $\pi_\ast \tilde{\mu}=\mu$,
		\item $K_{\pi(\tilde{x})}=\pi_\ast \tilde{K}_{\tilde{x}}$ for all $\tilde{x}\in \tilde{M}$,
		\item  there exists $\eta \colon M\to\tilde{M}$ continuous with
		$\pi\circ\eta=\id_M$ such that   $\eta^\ast (\tilde{\Escr})\subseteq \Escr$
		and $\norm{\varphi\circ \eta}_\Escr\leq M_1\norm{\varphi}_{\tilde{\Escr}}$
		for some constant $M_1<\infty$ and all  $\varphi\in \tilde{\Escr}$,
		\item  $\pi^\ast(\Escr)\subseteq \tilde{\Escr}$ and   $\norm{ \varphi\circ \pi }_{\tilde{\Escr}}\leq M_2\, \norm{\varphi}_\Escr$ for some constant $M_2<\infty$ and all   $\varphi\in \Escr$.		
	\end{enumerate}
\end{definition}

Factors have the following properties.

\begin{proposition}
	Let $(M,K,\mu,\Escr)$ be a factor of $(\tilde{M},\tilde{K},\tilde{\mu},\tilde{\Escr})$. Then
	\begin{enumerate}
		\item $\pi^\ast \circ \Qop_K=\Qop_{\tilde{K}} \circ \pi^\ast$, i.e. the following commutative diagram holds
		$$  \begin{CD}
			C^0(\tilde{M})    @>\Qop_{\tilde{K}}>>  C^0(\tilde{M}) \\
			@A\pi^\ast AA        @AA\pi^\ast A\\
			C^0(M)     @>>\Qop_{K}>  C^0(M)
		\end{CD}  $$
		\item The bounded linear map $\pi^\ast \colon \Escr \to \pi^\ast(\Escr)$ is an isomorphism onto the closed linear subspace $\pi^\ast(\Escr)\subseteq \tilde{\Escr}$.
	\end{enumerate}
\end{proposition}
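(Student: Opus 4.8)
The plan is to verify the two assertions directly from Definition~\ref{factor}, since both are essentially formal consequences of the compatibility conditions (1)--(4) together with the defining relation $K_{\pi(\tilde x)} = \pi_\ast \tilde K_{\tilde x}$.

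\emph{Part (1): the commutative diagram.} First I would fix $\varphi \in C^0(M)$ and $\tilde x \in \tilde M$ and compute both sides of $\pi^\ast(\Qop_K \varphi) = \Qop_{\tilde K}(\pi^\ast \varphi)$ at $\tilde x$. On the left,
$$
\bigl(\pi^\ast(\Qop_K\varphi)\bigr)(\tilde x) = (\Qop_K\varphi)(\pi(\tilde x)) = \int_M \varphi(y)\, dK_{\pi(\tilde x)}(y).
$$
On the right,
$$
\bigl(\Qop_{\tilde K}(\pi^\ast\varphi)\bigr)(\tilde x) = \int_{\tilde M} \varphi(\pi(\tilde y))\, d\tilde K_{\tilde x}(\tilde y) = \int_M \varphi(y)\, d(\pi_\ast \tilde K_{\tilde x})(y),
$$
where the last equality is the change-of-variables (push-forward) formula. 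By condition (2) in Definition~\ref{factor}, $K_{\pi(\tilde x)} = \pi_\ast \tilde K_{\tilde x}$, so the two integrals agree, and since $\tilde x$ was arbitrary we get $\pi^\ast \circ \Qop_K = \Qop_{\tilde K} \circ \pi^\ast$ as operators $C^0(M) \to C^0(\tilde M)$. (One should also note $\pi^\ast$ does map $C^0(M)$ into $C^0(\tilde M)$ since $\pi$ is continuous; this makes the diagram meaningful.)

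\emph{Part (2): $\pi^\ast$ is an isomorphism onto a closed subspace.} Here I would use condition (3), which provides a continuous section $\eta \colon M \to \tilde M$ with $\pi \circ \eta = \id_M$ and $\eta^\ast(\tilde\Escr) \subseteq \Escr$, $\norm{\varphi\circ\eta}_\Escr \le M_1 \norm{\varphi}_{\tilde\Escr}$; and condition (4), which gives $\pi^\ast(\Escr) \subseteq \tilde\Escr$ with $\norm{\varphi\circ\pi}_{\tilde\Escr} \le M_2\norm{\varphi}_\Escr$. Thus $\pi^\ast \colon \Escr \to \tilde\Escr$ is a bounded linear map with bound $M_2$, and $\eta^\ast \colon \tilde\Escr \to \Escr$ is bounded with bound $M_1$. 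From $\pi \circ \eta = \id_M$ we get $\eta^\ast \circ \pi^\ast = (\pi\circ\eta)^\ast = \id_\Escr$. Hence $\pi^\ast$ is injective (it has a left inverse), and for every $\varphi \in \Escr$,
$$
\norm{\varphi}_\Escr = \norm{\eta^\ast(\pi^\ast\varphi)}_\Escr \le M_1 \norm{\pi^\ast\varphi}_{\tilde\Escr} \le M_1 M_2 \norm{\varphi}_\Escr,
$$
so $\pi^\ast$ is bounded below, i.e. $\frac{1}{M_1}\norm{\varphi}_\Escr \le \norm{\pi^\ast\varphi}_{\tilde\Escr} \le M_2\norm{\varphi}_\Escr$. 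A bounded-below operator from a Banach space has closed range (given a convergent sequence $\pi^\ast\varphi_n \to \zeta$ in $\tilde\Escr$, the lower bound forces $\{\varphi_n\}$ to be Cauchy in $\Escr$, hence $\varphi_n \to \varphi$, and by continuity $\pi^\ast\varphi = \zeta$); therefore $\pi^\ast(\Escr)$ is a closed subspace of $\tilde\Escr$ and $\pi^\ast \colon \Escr \to \pi^\ast(\Escr)$ is a bounded linear bijection with bounded inverse, i.e. a Banach-space isomorphism onto its (closed) image.

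I do not anticipate a serious obstacle here: the content is entirely bookkeeping with the axioms of a factor. The only point requiring a little care is making sure the norm estimates in (3) and (4) are invoked in the right direction to conclude both boundedness and bounded-below-ness of $\pi^\ast$ — the section $\eta$ is exactly what upgrades injectivity to an isomorphism onto a closed subspace. If anything, one should double-check that the push-forward identity $\int \varphi\, d(\pi_\ast\nu) = \int \varphi\circ\pi\, d\nu$ is being applied to the bounded continuous function $\varphi$, which is immediate.
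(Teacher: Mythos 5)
Your proof is correct. Part (1) is essentially identical to the paper's: both apply the push-forward identity $\int \varphi \, d(\pi_\ast \tilde K_{\tilde x}) = \int (\varphi\circ\pi)\, d\tilde K_{\tilde x}$ together with condition (2) of the factor definition.

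For Part (2) you take a genuinely different and, I think, cleaner route. The paper first establishes injectivity from surjectivity of $\pi$, then characterizes the image $\pi^\ast(\Escr)$ as the subspace $\mathcal{V}$ of observables that are constant on the fibers of $\pi$, shows $\mathcal{V}$ is closed by a direct pointwise argument, proves $\pi^\ast(\Escr) = \mathcal{V}$ using the section $\eta$, and finally invokes the Banach open mapping theorem to get boundedness of the inverse. You instead observe that $\eta^\ast \circ \pi^\ast = \id_\Escr$ immediately gives injectivity, and that conditions (3)--(4) make $\pi^\ast$ bounded below: $\frac{1}{M_1}\norm{\varphi}_\Escr \le \norm{\pi^\ast\varphi}_{\tilde\Escr} \le M_2\norm{\varphi}_\Escr$. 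This yields closed range and a bounded inverse in one stroke, without the open mapping theorem and without needing to identify the image with $\mathcal{V}$. Your argument is more quantitative --- it hands you the explicit bound $\norm{(\pi^\ast)^{-1}} \le M_1$ --- and sidesteps the paper's slightly delicate step of verifying that $\mathcal{V}$ is closed (which requires noting that $\tilde\Escr$-norm convergence implies pointwise convergence via the bounded inclusion $\tilde\Escr \hookrightarrow L^\infty$). The paper's characterization of $\pi^\ast(\Escr)$ as the fiber-constant observables is of independent interest but is not needed for the stated proposition, so nothing is lost.
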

\begin{proof}
	Since $K_{\pi(\tilde{x})}= \pi_\ast \tilde{K}_{\tilde{x}}$,
	$$
	(\Qop_{\tilde{K}} \circ \pi^\ast \varphi)(\tilde{x})=\int \varphi \circ \pi d\tilde{K}_{\tilde{x}} =\int \varphi dK_{\pi(\tilde{x})}=(\pi^\ast \circ \Qop_K \varphi)(\tilde{x})
	$$
which proves item (1).

Let us prove item (2). By definition, $\pi^\ast \varphi=\varphi \circ \pi$ is a bounded linear operator. Since $\pi$ is surjective, $\pi^\ast$ is one to one. Thus $\pi^\ast :\Escr \to \pi^\ast(\Escr)$ is a linear bijection.
	Define the closed linear subspace of $\tilde{\Escr}$
	$$
	\mathcal{V}:=\{\varphi \in \tilde{\Escr}: \forall \, x, y \in \tilde{M}, \pi(x)=\pi(y)\Rightarrow \varphi(x)=\varphi(y)\}.
	$$
	The linearity of $\mathcal{V}$ is clear. For the closedness, assume that $\tilde{\varphi}_n\in \mathcal{V}$ and $\tilde{\varphi}_n \to \tilde{\varphi}$ pointwise in $\tilde{\Escr}$. If $\pi(x)=\pi(y)$, then $\tilde{\varphi}_n(\tilde{x})=\tilde{\varphi}_n(\tilde{y})$. Letting $n\to \infty$ we get $\tilde{\varphi}(\tilde{x})=\tilde{\varphi}(\tilde{y})$ with $\tilde{\varphi}\in \tilde{\Escr}$, which shows that $\tilde{\varphi}\in \mathcal{V}$.
	
	Clearly $\pi^\ast(\Escr)\subseteq \mathcal{V}$. Conversely, given $\varphi\in \mathcal{V}$ consider the function $\psi:= \varphi \circ \eta \in \Escr$ where $\eta: M \to \tilde{M}$ is given by Definition~\ref{factor}. Since $\pi(x)=\pi(\eta(\pi(x)))$, by the definition of $\mathcal{V}$ we have $\varphi(x)=\varphi(\eta(\pi(x)))$ for all $x\in \tilde{M}$, which proves that $\varphi=\psi\circ \pi \in \pi^\ast(\Escr)$. Therefore, $\mathcal{V}=\pi^\ast(\Escr)$ is a closed linear subspace of $\tilde{\Escr}$, thus also a Banach (sub)space with the induced norm from $\tilde{\Escr}$. Finally, by the Banach open mapping theorem, $\pi^\ast$ is an open map. Thus the inverse map $(\pi^\ast)^{-1}: \pi^\ast (\Escr) \to \Escr$ is continuous, so it is also a bounded linear map. This proves that $\pi^\ast$ is an isomorphism.
\end{proof}

We introduce the notion of contracting factors.
\begin{definition}\label{confac}
	We call $(M,K,\mu,\Escr)$ a contracting factor of $(\tilde{M},\tilde{K},\tilde{\mu},\tilde{\Escr})$ with contracting rate $\tau = \{\tau_n\}_{n\ge1}$ if additionally we have the following: $\exists\, C < \infty$ such that $\forall\, \tilde{\varphi}\in \tilde{\Escr}$, $\forall n\in \N$, $\exists\, \psi_n\in \Escr,$ satisfying,   for all $n\in \N$,
	$$
	\norm{\psi_n}_\infty \leq \norm{\tilde{\varphi}}_\infty, \,\,\, \norm{\psi_n}_\Escr \leq C \norm{\tilde{\varphi}}_{\tilde{\Escr}}
	$$
	and
	$$
	\norm{\tilde{\Qop}^n\tilde{\varphi}-\pi^\ast \psi_n}_\infty \leq C\norm{\tilde{\varphi}}_{\tilde{\Escr}} \, \tau_n \, .
	$$
	\end{definition}

We have the following abstract result.

\begin{theorem}\label{liftsm}
	Assume that $(M,K,\mu,\Escr)$ is strongly mixing with rate $r$ and that $(M,K,\mu,\Escr)$ is a contracting factor of $(\tilde{M},\tilde{K},\tilde{\mu},\tilde{\Escr})$ with contracting rate $\tau$. Then $(\tilde{M},\tilde{K},\tilde{\mu},\tilde{\Escr})$ is strongly mixing with rate $r^\ast_n=\max\{r_{\frac{n}{2}},\tau_{\frac{n}{2}}\}$.
\end{theorem}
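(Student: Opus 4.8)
The plan is to establish the sup-norm decay estimate of Definition~\ref{strong mixing} for $\tilde{\Qop}$ directly, by splitting the $n$ iterates into two roughly equal halves: the first $n_1$ iterates are handled via the contracting-factor estimate, which produces an almost-preimage of $\tilde{\Qop}^{n_1}\tilde{\varphi}$ living downstairs, and the remaining $n_2$ iterates are handled by pushing that preimage forward with the commutative diagram $\tilde{\Qop}\circ\pi^\ast=\pi^\ast\circ\Qop$ and then applying the strong mixing of the factor $(M,K,\mu,\Escr)$. Concretely, fix $\tilde{\varphi}\in\tilde{\Escr}$ and $n\in\N$, and write $n=n_1+n_2$ with $n_1=\lceil n/2\rceil$ and $n_2=\lfloor n/2\rfloor$.

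\textbf{The two halves.} By the contracting-factor hypothesis applied with the index $n_1$, there is $\psi:=\psi_{n_1}\in\Escr$ with $\norm{\psi}_\infty\le\norm{\tilde{\varphi}}_\infty$, $\norm{\psi}_\Escr\le C\norm{\tilde{\varphi}}_{\tilde{\Escr}}$, and $\norm{\tilde{\Qop}^{n_1}\tilde{\varphi}-\pi^\ast\psi}_\infty\le C\norm{\tilde{\varphi}}_{\tilde{\Escr}}\,\tau(n_1)$. Since the Markov operator $\tilde{\Qop}$ is positive and fixes $\ind$, it has norm one on $L^\infty(\tilde{M})$; applying $\tilde{\Qop}^{n_2}$ to this difference and invoking the commutative diagram from the factor proposition, $\tilde{\Qop}^{n_2}\circ\pi^\ast=\pi^\ast\circ\Qop^{n_2}$, gives
\[
\norm{\tilde{\Qop}^{n}\tilde{\varphi}-\pi^\ast(\Qop^{n_2}\psi)}_\infty\le C\norm{\tilde{\varphi}}_{\tilde{\Escr}}\,\tau(n_1).
\]
Now I would use the strong mixing of $(M,K,\mu,\Escr)$ with rate $r$ on the observable $\psi$: $\norm{\Qop^{n_2}\psi-\int_M\psi\,d\mu}_\infty\le C_0\norm{\psi}_\Escr\,r(n_2)\le C_0C\norm{\tilde{\varphi}}_{\tilde{\Escr}}\,r(n_2)$. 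Pulling this back by $\pi^\ast$, which is a sup-norm isometry because $\pi$ is surjective and sends constants to constants, yields the same bound for $\norm{\pi^\ast(\Qop^{n_2}\psi)-\bigl(\int_M\psi\,d\mu\bigr)\ind}_\infty$.

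\textbf{Reconciling the centerings and conclusion.} It remains to replace the constant $\int_M\psi\,d\mu$ by $\int_{\tilde{M}}\tilde{\varphi}\,d\tilde{\mu}$. For this I would chain: $\int_M\psi\,d\mu=\int_{\tilde{M}}\pi^\ast\psi\,d\tilde{\mu}$ since $\pi_\ast\tilde{\mu}=\mu$; then $\bigl|\int_{\tilde{M}}\pi^\ast\psi\,d\tilde{\mu}-\int_{\tilde{M}}\tilde{\Qop}^{n_1}\tilde{\varphi}\,d\tilde{\mu}\bigr|\le\norm{\pi^\ast\psi-\tilde{\Qop}^{n_1}\tilde{\varphi}}_\infty\le C\norm{\tilde{\varphi}}_{\tilde{\Escr}}\,\tau(n_1)$; and finally $\int_{\tilde{M}}\tilde{\Qop}^{n_1}\tilde{\varphi}\,d\tilde{\mu}=\int_{\tilde{M}}\tilde{\varphi}\,d\tilde{\mu}$ because $\tilde{\mu}$ is $\tilde{K}$-stationary. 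Combining the three sup-norm bounds by the triangle inequality gives $\norm{\tilde{\Qop}^n\tilde{\varphi}-\int_{\tilde{M}}\tilde{\varphi}\,d\tilde{\mu}}_\infty\le(2C+C_0C)\norm{\tilde{\varphi}}_{\tilde{\Escr}}\max\{\tau(n_1),r(n_2)\}$. Since $r$ and $\tau$ are decreasing, $\tau(n_1)\le\tau(n/2)$, and since they satisfy $\tau(s+s')\ge\tau(s)\tau(s')$ one has $r(n_2)\le r(n/2)/r(1/2)$, so the right-hand side is bounded by a constant multiple of $\norm{\tilde{\varphi}}_{\tilde{\Escr}}\max\{r(n/2),\tau(n/2)\}=\norm{\tilde{\varphi}}_{\tilde{\Escr}}\,r^\ast(n)$, which is the assertion.

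\textbf{Main obstacle.} The operator-norm and commutative-diagram manipulations are routine. The one step that genuinely needs care is the \emph{reconciliation of the centering constants}: the near-preimage $\psi_{n_1}$ supplied by the contracting-factor property depends on $n_1$, and a priori $\int_M\psi_{n_1}\,d\mu$ is unrelated to $\int_{\tilde{M}}\tilde{\varphi}\,d\tilde{\mu}$; controlling this difference \emph{uniformly in} $n_1$ is precisely what forces the combined use of the compatibility $\pi_\ast\tilde{\mu}=\mu$, the defining estimate of a contracting factor, and the $\tilde{K}$-stationarity of $\tilde{\mu}$. A secondary, purely cosmetic point is absorbing the integer-part discrepancy in the split $n=n_1+n_2$ into the implied constant, for which property~(3) of decaying rates suffices.
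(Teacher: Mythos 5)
Your proof is correct and follows essentially the same route as the paper: split the $n$ iterates into two halves, use the contracting-factor estimate on the first half to replace $\tilde{\Qop}^{n_1}\tilde\varphi$ by $\pi^\ast\psi$, push forward with the commutative diagram and apply the strong mixing of the factor on the second half, and reconcile $\int_M\psi\,d\mu$ with $\int_{\tilde M}\tilde\varphi\,d\tilde\mu$ via $\pi_\ast\tilde\mu=\mu$ and $\tilde K$-stationarity. The only cosmetic difference is that you absorb the parity discrepancy in the split $n=n_1+n_2$ using property (3) of decaying rates, whereas the paper reduces to even $n$ by replacing $\tilde\varphi$ with $\tilde{\Qop}\tilde\varphi$.
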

\begin{proof}
	Fix $\tilde{\varphi}\in \tilde{\Escr}$ and $n\in \N$. We may assume that $n$ is even. Otherwise, since $\tilde{\Qop}^n\tilde{\varphi}=\tilde{\Qop}^{n-1}(\tilde{\Qop}\tilde{\varphi})$, we can work with $\tilde{\Qop}\tilde{\varphi}$ instead of $\tilde{\varphi}$. For this $\tilde{\varphi}$ and $\frac{n}{2}$, consider $\psi_{\frac{n}{2}}=:\psi\in \Escr$ such that 
	$$
	\norm{\psi}_\infty \leq \norm{\tilde{\varphi}}_\infty, \,\, \norm{\psi}_\Escr \lesssim \norm{\tilde{\varphi}}_{\tilde{\Escr}}
	$$
	and
	$$
	\norm{\tilde{\Qop}^{\frac{n}{2}}\tilde{\varphi}-\pi^\ast\psi}_\infty \lesssim \norm{\tilde{\varphi}}_{\tilde{\Escr}} \, \tau_{\frac{n}{2}} ,
	$$
where we write $a \lesssim b$ if there is a context universal constant $C < \infty$ such that $a \le C \, b$.
	
Since $\tilde{\mu}$ is $\tilde{K}$-stationary we have
	$$
	\int \tilde{\Qop}^j\tilde{\varphi}d\tilde{\mu}=\int \tilde{\varphi} d\tilde{\mu}, \ \forall\, j\in \N ,
	$$

and since $\pi_\ast \tilde{\mu}=\mu$, we have
	$$
	\int \pi^\ast \psi d\tilde{\mu}=\int \psi d\mu.
	$$
	
Thus integrating both sides of the last inequality we obtain
	$$
	\abs{\int \tilde{\varphi}d\tilde{\mu}-\int \psi d\mu}\lesssim  \norm{\tilde{\varphi}}_{\tilde{\Escr}} \, \tau_{\frac{n}{2}} \, .
	$$
	
Using that $(M,K,\mu,\Escr)$ is strongly mixing with rate $r$, we have
	$$
	\bnorm{\Qop^{\frac{n}{2}}\psi-\int \psi d\mu}_\infty \lesssim \norm{\psi}_\Escr \, r_{\frac{n}{2}}
	\lesssim \norm{\tilde{\varphi}}_{\tilde{\Escr}} \, r_{\frac{n}{2}} \, .
	$$
	
On the other hand, by the commutativity of the diagram,
	$$
	\tilde{\Qop}^{\frac{n}{2}}\pi^\ast\psi=\pi^\ast \Qop^{\frac{n}{2}}\psi.
	$$
	
	Treating $\int \psi d\mu$ as a constant function, we have $\pi^\ast(\int \psi d\mu)=\int \psi d\mu$, so
	$$
	\bnorm{\tilde{\Qop}^{\frac{n}{2}}\pi^\ast\psi-\int \psi d\mu}_\infty= \bnorm{\pi^\ast\tilde{\Qop}^{\frac{n}{2}}\psi-\pi^\ast(\int \psi d\mu)}_\infty \lesssim \bnorm{\tilde{\varphi}}_{\tilde{\Escr}} \,  r_{\frac{n}{2}} \, . $$
	
Finally, note that
	$$
	\tilde{\Qop}^n \tilde{\varphi}-\int \tilde{\varphi}d\tilde{\mu}=\tilde{\Qop}^n\tilde{\varphi}-\tilde{\Qop}^{\frac{n}{2}}(\pi^\ast\psi)+\tilde{\Qop}^{\frac{n}{2}}(\pi^\ast\psi)-\int \psi d\mu+\int \psi d\mu-\int \tilde{\varphi} d\tilde{\mu}.
	$$
	
	Thus by the triangle inequality,
	$$
	\bnorm{\tilde{\Qop}^n \tilde{\varphi}-\int \tilde{\varphi}d\tilde{\mu}}_\infty \lesssim \norm{\tilde{\varphi}}_{\tilde{\Escr}} 
	\left(\tau_{\frac{n}{2}} + r_{\frac{n}{2}} + \tau_{\frac{n}{2}} \right)
	$$
and the result follows.
\end{proof}

We apply this abstract result with $(M,K,\mu,\Escr)$ being the Markov system $(\Sigma \times \T^d, \bar{K}, \mu \times m, \Hscr_{0,\alpha}(\Sigma \times \T^d))$, which was shown to be strongly mixing with rate $r_n=\frac{1}{n^p}$, where $p = \frac{\alpha}{\tau}-$, provided that the measure $\mu$ is mixing DC$(\gamma,\tau)$. Moreover,  $(\tilde{M},\tilde{K},\tilde{\mu},\tilde{\Escr})$ is the Markov system $(X^-\times \T^d, K^-,\mu^{-\N}\times m, \Hscr_\alpha(X^-\times \T^d))$.

\begin{theorem}
	$(\Sigma \times \T^d, \bar{K}, \mu \times m, \Hscr_{0,\alpha}(\Sigma \times \T^d))$ is a contracting factor  of $(X^-\times \T^d, K^-,\mu^{-\N}\times m, \Hscr_\alpha(X^-\times \T^d))$ with exponential rate. Therefore, the latter is strongly mixing with rate $r_n = \frac{1}{n^p}$.
\end{theorem}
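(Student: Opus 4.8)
The plan is to realize $(\Sigma\times\T^d,\bar{K},\mu\times m,\Hscr_{0,\alpha}(\Sigma\times\T^d))$ as a contracting factor of $(X^-\times\T^d,K^-,\mu^{-\N}\times m,\Hscr_\alpha(X^-\times\T^d))$ in the sense of Definition~\ref{confac}, with an \emph{exponential} contracting rate $\tau(n)=2^{-\alpha n}$, and then to quote Theorem~\ref{liftsm} together with the polynomial strong mixing of the factor, which we have already established. First I would fix the factor map $\pi\colon X^-\times\T^d\to\Sigma\times\T^d$, $\pi(\omega^-,\theta):=(\omega_0,\theta)$, and a section $\eta(\omega_0,\theta):=(\iota(\omega_0),\theta)$, where for a once-and-for-all fixed reference symbol $\rho\in\Sigma$ the past $\iota(\omega_0)\in X^-$ has $0$-th coordinate $\omega_0$ and all strictly negative coordinates equal to $\rho$. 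Checking the axioms of Definition~\ref{factor} is then routine: $\pi_\ast(\mu^{-\N}\times m)=\mu\times m$ and $\pi\circ\eta=\id$ are immediate; unwinding the index shift hidden in $K^-$ (adjoining a fresh symbol moves the old $\omega_0$ from position $0$ to position $-1$) gives $\pi_\ast K^-_{(\omega^-,\theta)}=\int\delta_{(\omega_1,\theta+\omega_0)}\,d\mu(\omega_1)=\bar{K}_{(\omega_0,\theta)}=\bar{K}_{\pi(\omega^-,\theta)}$; for $\varphi\in\Hscr_{0,\alpha}(\Sigma\times\T^d)$ the pullback $\varphi\circ\pi$ is locally constant in $\omega^-$ (two pasts at $d$-distance $<1$ already agree at position $0$), so $v_\alpha^{X^-}(\varphi\circ\pi)\le 2\norm{\varphi}_\infty$ and $\pi^\ast$ maps $\Hscr_{0,\alpha}$ boundedly into $\Hscr_\alpha(X^-\times\T^d)$; and $\varphi\circ\eta$ retains only the sup norm and the $\theta$-H\"older seminorm of $\varphi$, so $\eta^\ast$ maps $\Hscr_\alpha(X^-\times\T^d)$ boundedly into $\Hscr_{0,\alpha}$ (here $\Sigma$ carries the metric induced from its $0$-th coordinate in $X^-$).

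The heart of the argument is the contracting estimate. Writing out the iterated kernel, for $n\ge1$ one has $\tilde{\Qop}^n\varphi(\omega^-,\theta)=\int_{\Sigma^n}\varphi\bigl(\omega^-\omega_1\cdots\omega_n,\ \theta+\omega_0+\cdots+\omega_{n-1}\bigr)\,d\mu^n(\omega_1,\dots,\omega_n)$, where in the past $\omega^-\omega_1\cdots\omega_n$ the fresh symbols $\omega_n,\dots,\omega_1$ occupy positions $0,\dots,-(n-1)$, the old $\omega_0$ occupies position $-n$, and the remote past $\omega_{-1},\omega_{-2},\dots$ is buried at positions $<-n$. I would then set $\psi_n\in\Hscr_{0,\alpha}(\Sigma\times\T^d)$ by $\psi_n(\omega_0,\theta):=\tilde{\Qop}^n\varphi(\iota(\omega_0),\theta)$: this discards precisely the remote past, replacing it by $\rho$, while keeping $\omega_0$ because $\omega_0$ still enters the $\theta$-shift. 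Since $\tilde{\Qop}$ is an $L^\infty$-contraction acting on the $\theta$-variable only by translation, $\norm{\psi_n}_\infty\le\norm{\varphi}_\infty$ and $v_\alpha^{\T^d}(\psi_n)\le v_\alpha^{\T^d}(\varphi)$, hence $\norm{\psi_n}_\alpha\le\norm{\varphi}_\alpha$. Finally, for each fixed $\omega_1,\dots,\omega_n$ the two pasts $\omega^-\omega_1\cdots\omega_n$ and $\iota(\omega_0)\omega_1\cdots\omega_n$ agree at all positions $\ge-n$, so they lie at $d$-distance at most $2^{-(n+1)}$, while their torus arguments coincide; H\"older continuity of $\varphi$ in the $X^-$-variable then yields $\babs{\tilde{\Qop}^n\varphi(\omega^-,\theta)-\pi^\ast\psi_n(\omega^-,\theta)}\le v_\alpha^{X^-}(\varphi)\,2^{-\alpha(n+1)}\le\norm{\varphi}_\alpha\,2^{-\alpha n}$, which is exactly Definition~\ref{confac} with contracting rate $\tau(n)=2^{-\alpha n}$.

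With this in hand, Theorem~\ref{liftsm} applies: the factor $(\Sigma\times\T^d,\bar{K},\mu\times m,\Hscr_{0,\alpha}(\Sigma\times\T^d))$ is strongly mixing with rate $r(n)=1/n^p$, $p\nearrow\frac{\alpha}{\tau}$, as shown earlier, and it is a contracting factor with rate $\tau(n)=2^{-\alpha n}$, so $(X^-\times\T^d,K^-,\mu^{-\N}\times m,\Hscr_\alpha(X^-\times\T^d))$ is strongly mixing with rate $r^\ast(n)=\max\{r(n/2),\tau(n/2)\}$. Since an exponential dominates any power, $r^\ast(n)=r(n/2)=(2/n)^p$ for all large $n$, and absorbing the constant $2^p$ (and the bounded small-$n$ discrepancy) into the mixing constant gives strong mixing with rate $\tfrac1{n^p}$, as claimed. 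I expect the only genuinely delicate point to be the observation underlying the contracting estimate: iterating $K^-$ drives the deterministic ``memory'' of the system — the portion of the past not carried by $\omega_0$ — into ever deeper negative coordinates, which by the very definition of the metric $d$ an $\alpha$-H\"older observable feels only to order $2^{-\alpha n}$; the accompanying index bookkeeping (retaining $\omega_0$ for the $\theta$-shift while throwing away $\omega_{-1},\omega_{-2},\dots$) must be done carefully but is otherwise elementary.
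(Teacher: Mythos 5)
Your proposal is correct and follows essentially the same route as the paper's proof: same factor map $\pi(\omega^-,\theta)=(\omega_0,\theta)$, same section $\eta$ built from a fixed reference symbol, the same observation that after $n$ steps of $K^-$ the old past is buried below position $-n$ and so an $\alpha$-H\"older observable feels it only to order $2^{-\alpha n}$, and the same appeal to Theorem~\ref{liftsm}. The only cosmetic difference is in the definition of $\psi_n$: the paper first truncates $\varphi$ to an observable $\varphi_n$ depending only on coordinates $\omega_{-n+1},\dots,\omega_0$ and then sets $\pi^\ast\psi_n=(\Qop^-)^n\varphi_n$ (which therefore has the reference symbol at position $-n$), whereas you define $\psi_n(\omega_0,\theta)=(\Qop^-)^n\varphi(\iota(\omega_0),\theta)$ (which retains $\omega_0$ at position $-n$); both give the same exponential contracting rate up to a harmless constant, and otherwise the two arguments coincide.
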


\begin{proof}
	Define $\pi \colon X^-\times \T^d \to \Sigma \times \T^d, \,\pi(\omega^-,\theta)=(\omega_0,\theta)$. Fix $a\in \Sigma$, define $\eta \colon \Sigma \times \T^d \to X^-\times \T^d, \, \eta(\omega_0,\theta)=(\cdots aa\omega_0,\theta)$. It is straightforward to check items (1)-(4) in Definition \ref{factor}, thus the first system is a factor of the second. It remains to show that it is a contracting factor.
	
	Fix $n\in \N$ and denote by $\Hscr_{\alpha,n}(X^-\times \T^d)$ the observables in the space $\Hscr_\alpha(X^-\times \T^d)$ that only depend on the last $n$ random coordinates $\omega_{-n+1},\cdots, \omega_{-1},\omega_0$ and on $\theta$. 
If $\varphi \in \Hscr_{\alpha,n}(X^-\times \T^d)$, then
\begin{align*}
(\Qop^-)^n \varphi(\omega^-,\theta) & \\
&\kern-4em =\int\cdots \int \varphi(\omega^-\omega_1\ldots\omega_n,\theta+\omega_0+\cdots+\omega_{n-1})d\mu(\omega_n)\cdots d\mu(\omega_1)
\end{align*}
	only depends on $(\omega_0,\theta)$, so $(\Qop^-)^n \varphi(\omega^-,\theta)\in \Hscr_{0,\alpha}(\Sigma \times \T^d)$.
	
	Fix any $\varphi \in \Hscr_\alpha(X^-\times \T^d)$ and $n\in \N$. We construct $\psi_n\in \Hscr_{0,\alpha}(\Sigma \times \T^d)$ as follows. Let $ \varphi_n := \varphi \circ i_n$, where $i_n(\omega^-,\theta) :=(\cdots aa\omega_{-n+1}\cdots \omega_0,\theta)$.
Note that $\varphi_n\in \Hscr_{\alpha,n}(X^-\times \T^d)$. We proved that the bounded linear map $\pi^\ast \colon \Escr \to \pi^\ast(\Escr)$ is an isomorphism onto the closed linear subspace $\pi^\ast(\Escr)\subseteq \tilde{\Escr}$. So let $\psi_n\in \Hscr_{0,\alpha}(\Sigma\times\T^d)$ be such that 
	$$
	\pi^\ast \psi_n=\psi_n \circ \pi= (\Qop^-)^n\varphi_n.
	$$
	Then
	\begin{align*}
		\norm{(\Qop^-)^n(\varphi)-\psi_n\circ \pi}_\infty & = \norm{(\Qop^-)^n(\varphi)-(\Qop^-)^n\varphi_n}_\infty
	 \le  \norm{\varphi-\varphi_n}_\infty \\
	 & = \sup_{(\omega^-,\theta)\in X^-\times \T^d}\abs{\varphi(\omega^-,\theta)-\varphi(i_n(\omega^-,\theta))}\\
        & \leq  v_\alpha^{X^-}(\varphi) \, d\left(\omega^-, (\cdots aa\omega_{-n+1}\cdots\omega_0) \right)^\alpha\\
	&	\leq 2^{-n\alpha}\norm{\varphi}_\alpha.
	\end{align*}
Let $\sigma=2^{-\alpha}<1$ and conclude the proof by applying Theorem \ref{liftsm}.
\end{proof}

Fix any $\varphi \in \Hscr_\alpha(X^-\times \T^d)$ and $(\omega^-,\theta)\in X^-\times \T^d$ and consider the $K^-$-Markov chain $\{Z_n\}_{n\geq 0}$ such that 
$$
Z_0=(\omega^-,\theta), \ 
Z_n=(\omega^-\omega_1\cdots\omega_n,\theta+\omega_0+\cdots+\omega_{n-1}) \, \forall n\ge 1 .
$$
Let $S_n\varphi=\varphi(Z_0)+\cdots+\varphi(Z_{n-1})$.
From the abstract LDT Theorem~\ref{abstractldt} and Corollary~\ref{strongldt}, we obtain the following.

\begin{theorem}
	Assume that $\mu\in \rm{DC}(\gamma,\tau)$. Then $\forall\, \varphi \in \Hscr_\alpha(X^-\times \T^d)$, $\forall\, (\omega^-,\theta)\in X^-\times \T^d$ and $\forall\,\epsilon>0$ there is $n (\ep) \in \N$ such that $\forall n\ge n (\ep)$ we have
	$$
	\Pp_{(\omega^-,\theta)}\left\{\abs{\frac{1}{n}S_n\varphi-\int_{X^-\times \T^d} \varphi \, d\mu^{-\N}\times m}>\epsilon\right\}< e^{-c(\epsilon)n}
	$$
where $c(\epsilon) = \cldt \, \epsilon^{2+\frac{1}{p}}$, $n (\ep) = \nldt \, \ep^{- \frac{1}{p}}$ and $p = \frac{\alpha}{\tau}-$.
\end{theorem}

Let us slightly reformulate the LDT estimate above in more dynamical terms. Consider the probability space $(X\times \T^d, \mu^\Z \times m)$ and the transformation $f \colon X\times \T^d \to X\times \T^d, f(\omega,\theta)=(\sigma\omega,\theta+\omega_0)$. Define the projection $\pi \colon X\times \T^d \to X^-\times \T^d, \pi(\omega,\theta)=(\omega^-,\theta)$ and note that $\pi_\ast(\mu^\Z\times m)=\mu^{-\N}\times m$. Hence if $\bar{Z_0} = (\om, \theta) \in X \times \T^d$ is chosen according to the probability $\mu^\Z\times m$, then  $Z_0= \pi (\bar{Z_0}) = (\omega^-,\theta) \in X^-\times \T^d$ is chosen according to the probability $\mu^{-\N}\times m$. Moreover, the random process $Z_j =\pi ( f^j (\bar{Z}_0) )$ for $j \ge 0$ is a $K^-$-Markov chain, so by the previous theorem, $\forall\, \varphi \in \Hscr_\alpha(X^- \times \T^d)$ we have:
\begin{equation}\label{ldt10}
\mu^{\Z}\times m \Big\{(\omega,\theta) \colon \Big| \frac{1}{n}\sum_{j=0}^{n-1}\varphi \circ \pi (f^j(\omega, \theta))-\int \varphi d\mu^{-\N}\times m\Big|>\epsilon\Big\}< e^{-c(\epsilon)n} .
\end{equation}

If $\alpha>\tau$ we can choose $p \in (1, \frac{\alpha}{\tau})$. It is again easy to see that the conditions of Proposition~\ref{corclt} are satisfied for $(X^-\times \T^d, K^-,\mu^{-\N}\times m, \Hscr_\alpha(X^-\times \T^d))$ (using Lemma 2.2 of \cite{CDK-paper1}), thus implying the following.

\begin{theorem}\label{sigmaphi}
	Assume that $\mu\in \rm{DC}(\gamma,\tau)$ and let $\alpha>\tau$. Then $\forall\, \varphi \in \Hscr_\alpha(X^-\times \T^d)$ with zero mean and nonzero $L^1 (\mu^{-\N}\times m)$-norm, there exists $\sigma = \sigma(\varphi)>0$ such that
	$$
	\frac{S_n\varphi}{\sigma \, \sqrt n} \stackrel{d}{\longrightarrow} \mathcal{N}(0,1) \, .
	$$
\end{theorem}

\smallskip

\subsection{Third level: past and future dependent observables}

We extend the above statistical properties to observables in the space $\Hscr_\alpha(X\times\T^d)$  that also depend on the future. 	The idea  is to ``reduce'' an observable $\varphi\in \Hscr_\alpha(X\times \T^d)$ to an observable $\varphi^-\in \Hscr_\beta(X^-\times \T^d)$ for some $\beta >0$ (the precise statement is given in the proposition below).

We  identify the space  $\Hscr_\alpha(X^-\times \T^d)$ with the subspace of observables in $\Hscr_\alpha(X\times\T^d)$ that are future independent, where $\varphi$ is called future independent if $\varphi(x,\theta)=\varphi(y,\theta)$ whenever $x^-=y^-$. More precisely, an observable $\varphi \in \Hscr_\alpha(X^-\times \T^d)$ is identified with the future independent observable $\varphi \circ \pi \in \Hscr_\alpha(X\times\T^d)$, where $\pi \colon X\times \T^d \to X^-\times \T^d$ is the projection $\pi(\omega,\theta)=(\omega^-,\theta)$.

\begin{proposition}\label{rtpast}
Given $\varphi \in \Hscr_\alpha(X\times \T^d)$, there are $\varphi^-\in \Hscr_\beta(X^-\times \T^d)$ and $\eta\in \Hscr_\beta(X\times \T^d)$ with $\beta=\frac{\alpha}{2}$ such that
	\begin{equation}\label{reduction}
		\varphi-\varphi^-\circ f=\eta-\eta\circ f. 
	\end{equation}
	Moreover, the map $\Hscr_\alpha(X\times \T^d) \ni \varphi \mapsto \varphi^- \in \Hscr_\beta(X^-\times \T^d)$ is a bounded linear operator, that is, $\norm{\varphi^-}_\beta\lesssim \norm{\varphi}_\alpha$. 
\end{proposition}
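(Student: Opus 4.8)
The strategy is a standard cohomological (transfer‑cocycle / Livšic–Sinai) reduction: I want to average $\varphi$ along the stable direction so that the result no longer sees the future coordinates, paying a price only in the Hölder exponent. Since $f$ contracts the past coordinates under backward iteration (equivalently, $f$ expands nothing in the past direction while the distance $d$ on $X$ halves each time two sequences first agree on one more coordinate), the natural candidate is the telescoping sum
\begin{equation*}
\eta(\omega,\theta) := \sum_{j=0}^{\infty} \Bigl( \varphi(f^j(\omega,\theta)) - \varphi(f^j(\omega^\ast,\theta_\ast)) \Bigr),
\end{equation*}
where $(\omega^\ast,\theta_\ast)$ is a ``reference point'' obtained from $(\omega,\theta)$ by freezing all future coordinates $\omega_1,\omega_2,\dots$ to some fixed symbol $a\in\Sigma$ (and keeping the past $\omega^-$ and $\theta$ unchanged). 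The point of this choice is that $f^j(\omega,\theta)$ and $f^j(\omega^\ast,\theta_\ast)$ have the same $\theta$‑component for every $j$ (the translation only ever adds past‑or‑present frequencies, which agree), and their $X$‑components first disagree at coordinate $j+1$ or later, so $d\bigl(f^j(\omega,\theta), f^j(\omega^\ast,\theta_\ast)\bigr) \le 2^{-(j+1)}$. Hence each summand is $\lesssim v_\alpha^X(\varphi)\, 2^{-(j+1)\alpha}$ and the series converges absolutely and uniformly, with $\norm{\eta}_\infty \lesssim \norm{\varphi}_\alpha$.

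Next I would verify the cohomological identity. A direct computation gives
\begin{equation*}
\eta - \eta\circ f = \bigl(\varphi - \varphi(\cdot^\ast)\bigr) - \sum_{j\ge 1}\Bigl(\varphi(f^j(\cdot^\ast)) - \varphi(f^j((f\cdot)^\ast))\Bigr) + \text{(reindex)},
\end{equation*}
and after telescoping the terms involving the reference points collapse, leaving $\eta - \eta\circ f = \varphi - \varphi^-\circ f$, where $\varphi^-(\omega,\theta)$ is exactly the ``fully averaged'' observable built from the reference orbit; by construction $\varphi^-$ depends only on $(\omega^-,\theta)$, i.e. $\varphi^-\in\Hscr_\beta(X^-\times\T^d)$ once we control its regularity. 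The reindexing bookkeeping here (making sure $(f\,\cdot)^\ast$ versus $f(\cdot^\ast)$ match up so that everything cancels except the two boundary terms) is routine but must be done carefully; this is where a sign or index slip would hide.

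The main obstacle, and the reason the exponent drops from $\alpha$ to $\beta=\frac{\alpha}{3}$, is establishing the Hölder regularity of $\eta$ (and hence of $\varphi^-$) in \emph{both} variables, and in particular the $\theta$‑regularity. To estimate $\abs{\eta(\omega,\theta)-\eta(\omega,\theta')}$ one splits the defining series at a cutoff index $N = N(\abs{\theta-\theta'})$: for $j\le N$ one uses the naive bound $\abs{\varphi(f^j(\omega,\theta))-\varphi(f^j(\omega,\theta'))} \lesssim v_\alpha^{\T^d}(\varphi)\,\abs{\theta-\theta'}^\alpha$ (the $\theta$‑distance is unchanged under $f$), summed over $j\le N$; for $j>N$ one uses the exponential bound $\lesssim 2^{-j\alpha}$ on each of the two telescoped pieces. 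Balancing $N\,\abs{\theta-\theta'}^\alpha$ against $2^{-N\alpha}$ by choosing $N \asymp \abs{\log\abs{\theta-\theta'}}$ would only cost a logarithm, but the $X$‑variable estimate for $\eta$ requires comparing $(\omega,\theta)$ and $(\omega',\theta)$ whose sequences agree up to coordinate $k$: then $f^j$ of the two points still agree up to coordinate $k-j$ for $j<k$ and one loses control for $j\ge k$, so one splits at $j=k$, getting $k\cdot v_\alpha^X(\varphi)\,2^{-(k-?)\alpha}$‑type terms that, after optimizing, yield a Hölder exponent of the form $\alpha/(1+\text{something})$. Tracking all three competing effects (the $\theta$‑loss, the $X$‑loss in $\varphi^-$, and the $X$‑loss in $\eta$) and taking the worst gives $\beta=\frac{\alpha}{3}$; the boundedness $\norm{\varphi^-}_\beta\lesssim\norm{\varphi}_\alpha$ then follows since every estimate above was linear in the semi‑norms of $\varphi$. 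I would present the $\theta$‑estimate and the $X$‑estimate as two separate lemmas to keep the cutoff‑optimization arguments isolated.
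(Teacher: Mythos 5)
There is a fatal directional error in the construction of $\eta$. You define
\[
\eta(\omega,\theta):=\sum_{j=0}^\infty\bigl(\varphi(f^j(\omega,\theta))-\varphi(f^j(\omega^\ast,\theta_\ast))\bigr)
\]
with $\omega^\ast$ obtained from $\omega$ by freezing the future coordinates, and you claim the two orbits stay $2^{-(j+1)}$-close in $X$ and keep equal $\theta$-components. Both claims fail for forward iteration. Since $f(\omega,\theta)=(\sigma\omega,\theta+\omega_0)$ with $(\sigma\omega)_n=\omega_{n+1}$, the shift $\sigma^j$ pushes the coordinates where $\omega$ and $\omega^\ast$ first disagree (indices $\geq 1$) \emph{towards} the origin, not away from it; for any $j\geq 1$ the point $\sigma^j\omega$ and $\sigma^j\omega^\ast$ differ already at coordinate $0$, so $d(\sigma^j\omega,\sigma^j\omega^\ast)=1$ and there is no decay. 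Likewise, $f^j$ adds $\omega_0+\omega_1+\cdots+\omega_{j-1}$ to $\theta$, and for $j\geq 2$ this includes the \emph{future} frequency $\omega_1$, which differs between $\omega$ and $\omega^\ast$; so the $\theta$-components drift apart rather than agreeing. Consequently the series you write down does not converge, and the bound $\norm{\eta}_\infty\lesssim\norm{\varphi}_\alpha$ is false.

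The fix (and this is what the paper does) is to iterate \emph{backwards}. The reference point $P(\omega,\theta)$ with canonicalized future lies on the same local unstable fiber $W^u_{loc}(\omega^-,\theta)$ as $(\omega,\theta)$, and it is $f^{-1}$ that contracts along such fibers: $f^{-n}(\omega,\theta)=(\sigma^{-n}\omega,\ \theta-\omega_{-1}-\cdots-\omega_{-n})$ only subtracts past frequencies, so the $\theta$-components remain identical, and $\sigma^{-n}\omega,\sigma^{-n}(P\omega)$ agree up to coordinate $n$, giving $d\leq 2^{-(n+1)}$. The correct transfer function is the unstable holonomy $\eta(\omega,\theta)=h^u_\varphi((\omega,\theta),P(\omega,\theta))=\sum_{n\geq 1}\bigl(\varphi\circ f^{-n}(P(\omega,\theta))-\varphi\circ f^{-n}(\omega,\theta)\bigr)$, and the cohomological equation is first solved in the form $\varphi\circ f^{-1}-\varphi^-=\eta\circ f^{-1}-\eta$ (equivalent to \eqref{reduction} by composing with $f$). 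Your subsequent split-at-a-cutoff scheme for the H\"older estimates is the right idea and is structurally the same as the paper's, but it is built on a divergent sum as written; once you flip $f^j\mapsto f^{-n}$ throughout, the convergence becomes genuine and the exponent bookkeeping goes through to give $\beta=\alpha/3$.
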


We postpone for now the proof of this proposition in order to explain how it can be used to derive the LDT and CLT for the mixed dynamical system $(X\times\T^d, f)$.

Integrating both sides of the equation $(\ref{reduction})$ w.r.t. $\mu^\Z \times m$ we have
$$
\int \varphi \, d\mu^\Z \times m-\int \varphi^-\circ f \, d\mu^\Z \times m=\int \eta \, d\mu^\Z \times m-\int \eta\circ f \, d\mu^\Z \times m.
$$

Since $\mu^\Z \times m$ is $f$-invariant, the right hand side equals zero, so
$$
\int \varphi \, d\mu^\Z \times m=\int \varphi^- \circ f \, d\mu^\Z \times m=\int \varphi^- \, d\mu^\Z \times m=\int \varphi^- \, d\mu^{-\N} \times m.
$$

We write~\eqref{reduction} as
$$
\varphi=\varphi^-\circ f +\eta-\eta\circ f
$$
and consider the  Birkhoff sums of both sides:
$$
S_n\varphi=S_n(\varphi^- \circ f)+\eta -\eta\circ f^n \, .
$$

Then
$$
\frac{1}{n}S_n\varphi -\int \varphi d\mu^\Z \times m=\frac{1}{n}S_n(\varphi^-\circ f)-\int \varphi^- d\mu^{-\N} \times m+\frac{\eta-\eta \circ f^n}{n} \, .
$$
The last term on the right-hand side converges uniformly to zero, while the Birkhoff sums of $\varphi^-\circ f$ are essentially the same as those of $\varphi^-$.
Thus the LDT estimates on the observable $\varphi^- \in \Hscr_\beta(X^-\times \T^d))$ given by~\eqref{ldt10} transfer over  to the original observable $\varphi \in \Hscr_\alpha(X\times \T^d)$, establishing the following.

\begin{theorem}
	Assume that $\mu\in \rm{DC}(\gamma,\tau)$. Then for every observable $\varphi \in \Hscr_\alpha(X\times \T^d)$ and $\epsilon>0$ there is $n (\ep) \in \N$ such that for all $n\ge n (\ep)$ we have
	$$
	\mu^{\Z}\times m\left\{ \abs{\frac{1}{n}S_n\varphi -\int \varphi d\mu^\Z \times m}>\epsilon\right\}<e^{-c(\epsilon)n}
	$$
where $c(\epsilon) = \cldt \, \epsilon^{2+\frac{1}{p}}$, $n (\ep) = \nldt \, \ep^{- \frac{1}{p}}$ and $p= \frac{\alpha}{2\tau}-$.
\end{theorem}

In order to obtain a CLT we have to assume that $\alpha>2\tau$ so that $p = \frac{\beta}{\tau} - o(1)= \frac{\alpha}{2 \tau} - o(1) >1$. Moreover, if $\varphi$ has mean zero, then $\varphi^-$ also has mean zero. Furthermore, if $\varphi$ is not a coboundary (see Definition~\ref{coboundary def}), the relation~\eqref{reduction} clearly shows that $\varphi^-$ is not $\mu^{-\N} \times m$-a.e. zero, so Theorem~\ref{sigmaphi} is applicable.

Using the cohomological relation~\eqref{reduction} we have
$$
S_n\varphi =S_n(\varphi^- \circ f)+\eta -\eta\circ f^n ,
$$
so dividing both sides by $\sigma \sqrt{n}$ with $\sigma=\sigma(\varphi^-)>0$ (which further depends on $\varphi$) obtained from Theorem~\ref{sigmaphi}, we get
$$
\frac{S_n\varphi }{\sigma \sqrt{n}}= \frac{S_n(\varphi^- \circ f)}{\sigma \sqrt{n}}+\frac{\eta -\eta\circ f^n}{\sigma \sqrt{n}} .
$$
Because
$$\frac{\eta -\eta\circ f^n}{\sigma \sqrt{n}} \to 0 \quad \text{uniformly},$$
we conclude the equivalence of the CLT between 
$(X\times \T^d, f,\mu^\Z\times m, \Hscr_\alpha(X\times \T^d))$ and $(X^-\times \T^d, K^-,\mu^{-\N}\times m, \Hscr_\beta(X^-\times \T^d))$. Namely, 
$$
\frac{S_n(\varphi^- \circ f)}{\sigma \sqrt{n}} \stackrel{d}{\longrightarrow} \mathcal{N}(0,1), 
$$
if and only if
$$
\frac{S_n\varphi}{\sigma \sqrt{n}} \stackrel{d}{\longrightarrow} \mathcal{N}(0,1).
$$

\begin{theorem}
	Assume that $\mu\in \rm{DC}(\gamma,\tau)$ and let $\alpha>2\tau$. If $\varphi \in \Hscr_\alpha(X\times \T^d)$ has zero mean and it is not a coboundary then there exists $\sigma = \sigma (\varphi) >0$ such that
	$$
	\frac{S_n\varphi}{\sigma \, \sqrt n} \stackrel{d}{\longrightarrow} \mathcal{N}(0,1) \, .
	$$
\end{theorem}

Therefore it remains to prove Proposition \ref{rtpast}. Before that, let us make some preparations regarding the concepts of continuous disintegration and unstable holonomy.

\begin{definition}
	Let $(\tilde{M},\tilde{\mu})$ and $ (M,\mu)$ be measurable spaces. Assume that $\tilde{M}$ and $M$ are compact metric spaces and $\pi\colon \tilde{M} \to M$ is continuous with $\pi_\ast \tilde{\mu}=\mu$. A {\em continuous disintegration} of $\tilde{\mu}$ over $\pi$ is a family of probability measures $\{\tilde{\mu}_a\}_{a\in M}$ such that 
	\begin{enumerate}
		\item $\tilde{\mu}_a \in\Prob(\tilde{M})$ and $\tilde{\mu}_a(\pi^{-1}\{a\})=1$,
		\item $M\ni a \mapsto \tilde{\mu}_a \in \Prob(\tilde{M})$ is continuous,
		\item $\forall\, \varphi \in C^0(\tilde{M})$,
		$$
		\int_{\tilde{M}}\varphi d\tilde{\mu}=\int_M \left( \int_{\pi^{-1}\{a\}} \varphi d\tilde{\mu}_a \right) \, d\mu(a). 
		$$
	\end{enumerate}
\end{definition}

Let $\pi \colon X\times \T^d \to X^- \times \T^d, \pi(\omega,\theta):=(\omega^-,\theta)$ be the standard projection. For $\omega\in X$, we will write $\omega=(\omega^-;\omega^+)$ where $\omega^-\in X^-$ and $\omega^+\in X^+:=\Sigma^{\N^+}$. We then have $\pi_\ast(\mu^\Z \times m)=\mu^{-\N}\times m$.

For any $(\omega^-,\theta)\in X^-\times \T^d$, let
$$
\Pp_{(\omega^-,\theta)}:=\delta_{\omega^-} \times \mu^{\N^+}\times \delta_\theta \in \Prob(X\times \T^d).
$$
Then clearly we have that $\{\Pp_{(\omega^-,\theta)}\}_{(\omega^-,\theta)\in X^-\times \T^d}$ is a continuous disintegration of $\Pp=\mu^\Z \times m$ over $\pi$. Moreover, for $(\omega^-,\theta)\in X^-\times \T^d$,
\begin{align*}
\pi^{-1}\left\{(\omega^-,\theta)\right\} &=\left\{(\omega^-,\omega^+,\theta):\omega^+\in X^+\right\}\\
&= \{ (x,\theta)\in X\times \T^d \colon x^-= \omega^- \} =: W^u_{loc}(\omega,\theta)
\end{align*}
are the local unstable sets of the partially hyperbolic dynamical system $f \colon X \times \T^d\to X\times \T^d$. We clarify this in the following.

Let $x,y \in X$ with $x^-=y^-$. Namely,
$$
x=(\cdots,x_{-1},x_0;x_1,\cdots),\quad y=(\cdots, x_{-1},x_0;y_1,\cdots).
$$
Then
$$
\sigma^{-1}x=(\cdots, x_{-1}; x_0, x_1,\cdots), \quad \sigma^{-1}y=(\cdots, x_{-1};x_0,y_1,\cdots)
$$
which gives $d(\sigma^{-1}x,\sigma^{-1}y)\leq 2^{-1}$. If $(x,\theta), (y,\theta)$ belong to the same fiber $W^u_{loc}(x^-,\theta)$, then $x^-=y^-$ and
$$
f^{-1}(x,\theta)=(\sigma^{-1}x,\theta-x_{-1}), \quad f^{-1}(y,\theta)=(\sigma^{-1}y,\theta-x_{-1})
$$
are still in the same fiber $W^u_{loc}(x^-,\theta-x_{-1})$ with
$$
d (f^{-1}(x,\theta), f^{-1}(y,\theta))\leq 2^{-1}.
$$
So $f^{-1}$ contracts the fibers. By induction,
$$
d (f^{-n}(x,\theta), f^{-n}(y,\theta))\leq 2^{-n}.
$$
The backward contracting means that they are unstable sets.

Likewise we define the local stable sets of $f$ by
$$ W^s_{loc}(\omega,\theta):= \{ (x,\theta)\in X\times \T^d \colon x^+= \omega^+ \} .$$
These sets are mapped to each other and contracted by $f$.

Given $\varphi \in \Hscr_\alpha(X\times \T^d)$, 
we may define a $1$-dimensional additive cocycle
$F\colon X\times\T^d\times \R\to X\times\T^d\times \R$ by
$$F((x,\theta), t) :=(f(x,\theta), \Phi_{(x,\theta)}(t) ) := (f(x,\theta), t+\varphi(x,\theta)) , $$
whose iterates are given by 
$$F^n((x,\theta), t) =(f^n(x,\theta), \Phi^n_{(x,\theta)}(t)) = \left(f^n(x,\theta), \, t+S_n \varphi(x,\theta) \right) . $$

Because $f$ is partially hyperbolic, we can define fiber holonomies
 along unstable sets. These are families of maps
 $\left\{H^u_{(x,\theta), (y,\theta)} \colon \R\to\R \right\}$ 
 indexed by the pairs of points $(x,\theta)$, $(y,\theta)$ from the same
 local unstable sets $W^u_{loc}$. As usual these holonomies are 
defined by
$$H^u_{(x,\theta), (y,\theta)} (t):=\lim_{n\to\infty}  \Phi^{n}_{f^{-n}(y,\theta)} \, \left(\Phi_{f^{-n}(x,\theta)}^{n}\right)^{-1}  .$$
It is easy to see that these limits exist 
and take the form
$$H^u_{(x,\theta), (y,\theta)} (t) = t + h_{\varphi}^u((x,\theta),(y,\theta)), $$
where 
\begin{equation}
	\label{def hu}
h_{\varphi}^u((x,\theta),(y,\theta)):= \sum_{n=1}^{\infty}[\varphi(f^{-n}(y,\theta))-\varphi(f^{-n}(x,\theta))]  
\end{equation}
and the series converges by Weierstrass $M$-test, because
$$ \abs{\varphi(f^{-n}(y,\theta))-\varphi(f^{-n}(x,\theta))} \le v_\alpha(\varphi) \, 2^{-n\alpha} \, .
$$

We summarize below the usual properties of the holonomies
expressed in terms of the functions $h^u_{\varphi}$,
see for instance~\cite[Section 1.3]{AvV1}.

\begin{proposition}
	\label{holonomy properties}
	Given $(x,\theta) ,(y,\theta), (z,\theta)$ on the same $W^u_{loc}$, the following properties hold (the last one holds if $f(y,\theta)$ and $f (x,\theta)$ belong to the same $W^u_{loc}$):
	\begin{itemize}
		\item[(a)]  $ \displaystyle h^u_\varphi((x,\theta),(x,\theta)) = 0$,
		\smallskip
		
		\item[(b)]  $ \displaystyle h^u_\varphi((x,\theta),(y,\theta)) = - h^u_\varphi((y,\theta),(x,\theta)) $,
		\smallskip
		
		\item[(c)]  $\displaystyle  h^u_\varphi((x,\theta),(z,\theta)) = h^u_\varphi((x,\theta),(y,\theta))  +  h^u_\varphi((y,\theta),(z,\theta)) $,
		\smallskip
		
		\item[(d)]  $ \displaystyle h^u_\varphi((x,\theta),(y,\theta)) + \varphi(y,\theta) = \varphi(x,\theta)  + h^u_\varphi(f(x,\theta), f(y,\theta))$.
	\end{itemize}
\end{proposition}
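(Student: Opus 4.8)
The plan is to derive all four identities directly from the definition
$$h^u_\varphi((x,\theta),(y,\theta)) := \sum_{n=1}^\infty \bigl[\varphi(f^{-n}(y,\theta)) - \varphi(f^{-n}(x,\theta))\bigr],$$
the only point requiring care being the absolute convergence that legitimizes the rearrangements below. As observed just above, since $(x,\theta)$ and $(y,\theta)$ lie on the same fiber $W^u_{loc}(x^-,\theta)$ the backward iterates $f^{-n}(x,\theta)$ and $f^{-n}(y,\theta)$ differ only in the symbol component and satisfy $d(f^{-n}(x,\theta),f^{-n}(y,\theta))\le 2^{-n}$; hence, since $\varphi\in\Hscr_\alpha$, the $n$-th summand is bounded in absolute value by $v_\alpha(\varphi)\,2^{-n\alpha}$. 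The series therefore converges absolutely (and uniformly in the pair of points), $h^u_\varphi$ is well defined, and term-by-term regrouping is permitted throughout.

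Granting this, (a) is immediate since each summand $\varphi(f^{-n}(x,\theta))-\varphi(f^{-n}(x,\theta))$ is zero; (b) follows by interchanging the r\^oles of $x$ and $y$, which negates every summand (the subscript $\psi$ on the right-hand side of the stated identity should read $\varphi$); and (c) follows from the elementary decomposition
$$\varphi(f^{-n}(z,\theta))-\varphi(f^{-n}(x,\theta)) = \bigl[\varphi(f^{-n}(z,\theta))-\varphi(f^{-n}(y,\theta))\bigr] + \bigl[\varphi(f^{-n}(y,\theta))-\varphi(f^{-n}(x,\theta))\bigr],$$
summed over $n$ and split into two absolutely convergent series.

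For (d), the standing hypothesis that $f(x,\theta)$ and $f(y,\theta)$ lie on a common fiber is exactly what makes $h^u_\varphi(f(x,\theta),f(y,\theta))$ meaningful; I would then compute it via the index shift $f^{-n}\circ f = f^{-(n-1)}$,
$$h^u_\varphi(f(x,\theta),f(y,\theta)) = \sum_{n=1}^\infty\bigl[\varphi(f^{-(n-1)}(y,\theta))-\varphi(f^{-(n-1)}(x,\theta))\bigr] = \sum_{m=0}^\infty\bigl[\varphi(f^{-m}(y,\theta))-\varphi(f^{-m}(x,\theta))\bigr],$$
then isolate the $m=0$ term, which equals $\varphi(y,\theta)-\varphi(x,\theta)$, leaving the tail $m\ge 1$ equal to $h^u_\varphi((x,\theta),(y,\theta))$. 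Rearranging gives $h^u_\varphi((x,\theta),(y,\theta)) + \varphi(y,\theta) = \varphi(x,\theta) + h^u_\varphi(f(x,\theta),f(y,\theta))$, which is (d). There is no genuine obstacle here beyond keeping track of indices; the only technical ingredient is the absolute convergence noted in the first step, and I would present (a)--(d) in precisely this order.
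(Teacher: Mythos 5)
Your proof is correct and complete. The paper itself omits the proof of Proposition~\ref{holonomy properties} (it only remarks that the properties are not difficult to establish), so there is no argument in the paper to compare against; your derivation of (a)--(c) from linearity and of (d) from the index shift $f^{-n}\circ f=f^{-(n-1)}$, together with the absolute-convergence bound $v_\alpha(\varphi)\,2^{-n\alpha}$ that justifies the regroupings, is exactly the intended routine verification (and your observation that the $\psi$ in (b) is a typo for $\varphi$ is also correct).
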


Fix a future $p^+\in X^+$ and define a projection
$$
P \colon X\times \T^d \to X\times \T^d, \,P(\omega^-; \omega^+; \theta)=(\omega^-;p^+;\theta) .
$$
Notice that fixing $\theta\in \T^d$, any two points $x,y\in X$ are such that
$P(x,\theta)$ and $P(y,\theta)$ belong to the same local stable set $W^s_{loc}(p^+,\theta)$.

We define $\varphi^-:X\times\T^d\to \R$  for $a\in X\times\T^d$ by 
\begin{equation}
	\label{defvarphi-}
\varphi^-(a):=  
h_\varphi^u(a,P(a)) \,+ \, \varphi ( f^{-1}(a) ) \, + \, h_\varphi^u(P(f^{-1}(a)),f^{-1}(a)).
\end{equation} 
We can also define a $1$-dimensional additive
cocycle 
(see Figure~\ref{holonomy picture}) putting  for every $a\in X\times\T^d$,
$$ \Phi^-_a:= H^u_{a,P(a)} \, \Phi_{f^{-1}(a)}\, H^u_{P(f^{-1}(a)),f^{-1}(a)} . $$
The function $\varphi^-$ and the cocycle $\Phi^-$ are related by  $\Phi_a^-(t)=t+ \varphi^-(a)$.

\begin{figure}[h]
	\begin{center}
		\includegraphics[width=1.0\textwidth]{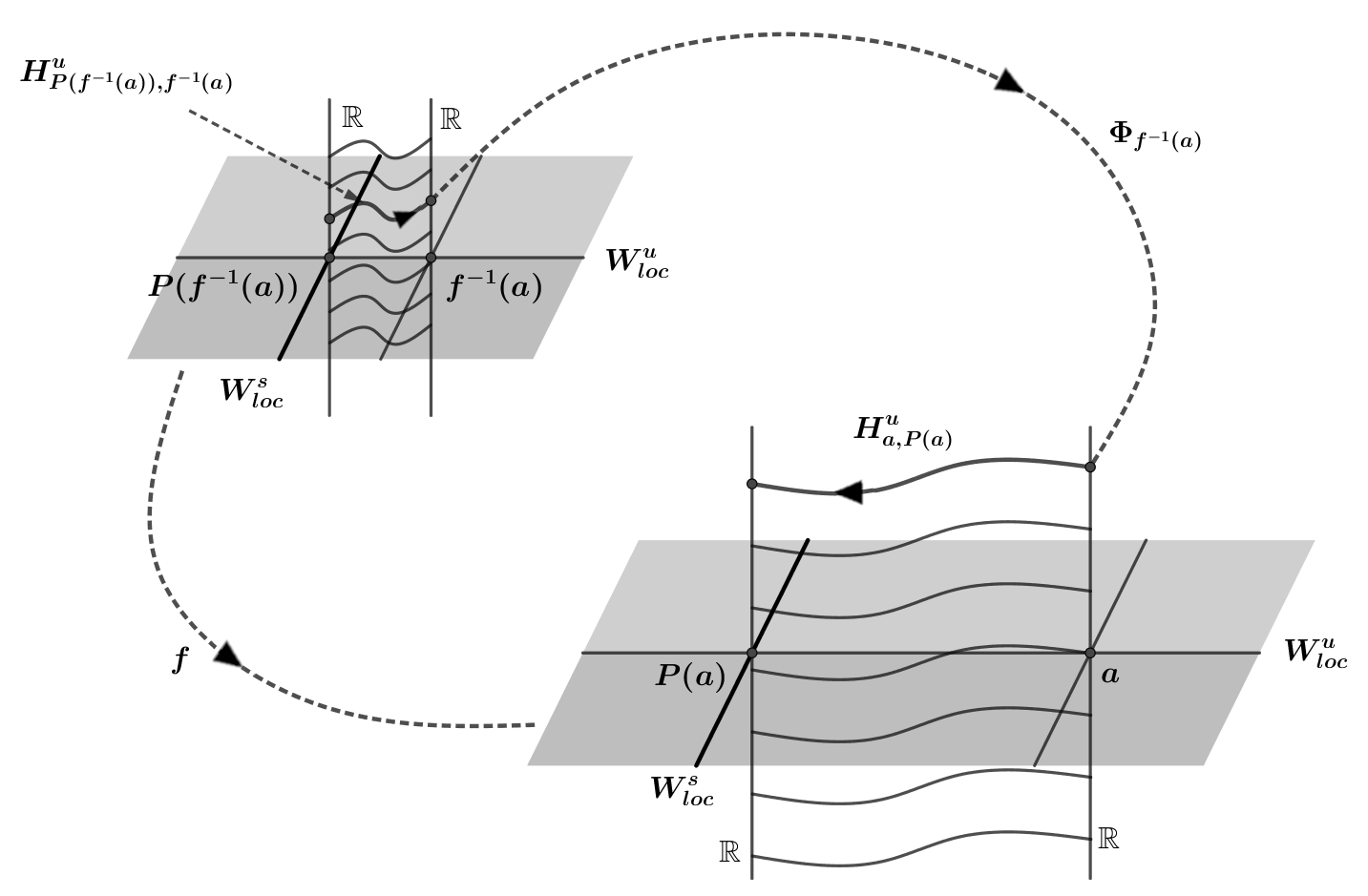}
		\caption{The cocycle $\Phi^-$.}
		\label{holonomy picture}
	\end{center}
\end{figure}

This dynamical interpretation and the properties of the holonomies readily imply that the cocycle $\Phi^-$ is constant along local unstable sets $W^u_{loc}$.
This implies that the functions $\Phi^-$ and $\varphi^-$ are future independent. Alternatively, by item (d) in Proposition~\ref{holonomy properties},
$$
h_\varphi^u(a,P(a))+\varphi\circ f^{-1}(a)=h_\varphi^u(f^{-1}(a),f^{-1}(P(a)))+\varphi \circ f^{-1}(P(a)),
$$ 
which implies the following representation of $\varphi^-$:
$$
\varphi^-(a)=h_\varphi^u(P \circ f^{-1}(a),f^{-1} \circ P(a))+\varphi \circ f^{-1} \circ P (a).
$$
Since $P$ fixes the future and $P$ appears in every term (and $P \circ f^{-1}(a)$ does not depend on non-negative coordinates), we conclude that $\varphi^-$ is future independent.

On the other hand by the definition in~\eqref{defvarphi-}  we have
\begin{align*}
\varphi ( f^{-1}(a) ) - \varphi^-(a)  &=  
h_\varphi^u(P(f^{-1}(a)),f^{-1}(a)) - 
h_\varphi^u(a,P(a))  .
\end{align*}
Thus the cohomological relation~\eqref{reduction} has the
solution 
$$\eta(a)=\eta_\varphi(a) := h_\varphi^u(a,P(a)).$$


We are now ready to prove Proposition~\ref{rtpast}.

\begin{proof}[Proof of  Proposition~\ref{rtpast}]
By definition~\eqref{def hu}, $h^u_\varphi$ depends linearly on $\varphi$ and hence the same is true about $\eta_\varphi$.

It remains to show  that for some $\beta > 0$, $\varphi^-\in \Hscr_\beta$ and $\Hscr_\alpha(X\times \T^d) \ni \varphi \mapsto \varphi^- \in \Hscr_\beta(X\times \T^d)$ is bounded. Recall that
$$
\varphi^-=\eta_\varphi-\eta_\varphi \circ f^{-1}+\varphi \circ f^{-1}.
$$
Since $f^{-1}$ is Lipschitz w.r.t. the distance $d$ on $X \times \T^d$,  it is enough to show that $\varphi \mapsto \eta_\varphi$ is bounded, that is, for some $\beta > 0$,
$
\norm{\eta_\varphi}_\beta \lesssim \norm{\varphi}_\alpha
$.

By definition,
$$
\norm{\eta_\varphi}_\beta=\norm{\eta_\varphi}_\infty+v_\beta^X(\eta_\varphi)+v_\beta^{\T^d}(\eta_\varphi).
$$

We have already shown that $\norm{\eta_\varphi}_\infty\lesssim v_\alpha^X(\varphi) \leq \norm{\varphi}_\alpha$.
Rewrite
$$
\eta_\varphi(\omega,\theta)=\sum_{n=1}^{\infty} \left[ \varphi \circ f^{-n}\circ P(\omega,\theta)-\varphi \circ f^{-n}(\omega,\theta) \right] =:\sum_{n=1}^{\infty}g_n(\omega,\theta).
$$

We want to show that for some $\beta > 0$ and for any $\theta\in \T^d$ and any $k\in \N$, if $x, y \in X$ are such that $x_j=y_j$ for all indices $j$ with $\abs{j}\leq k$ (meaning that $d(x, y) \le 2^{-(k+1)}$), then 
$$
\abs{\eta_\varphi(x,\theta)-\eta_\varphi(y,\theta)}\lesssim v_\alpha^X(\varphi)2^{-k\beta} \, .
$$
This would imply that $v_\beta^X(\eta_\varphi)\lesssim v_\alpha^X(\varphi)\leq \norm{\varphi}_\alpha$.

Indeed, by the triangle inequality we have
$$
\abs{\eta_\varphi(x,\theta)-\eta_\varphi(y,\theta)}\leq \sum_{1 \le n \le \frac{k}{2}} \abs{g_n(x,\theta)-g_n(y,\theta)}+\sum_{n>\frac{k}{2}}\abs{g_n(x,\theta)-g_n(y,\theta)}.
$$

We analyze the two sums on the  right-hand side separately.
\begin{align*}
\sum_{n>\frac{k}{2}}\abs{g_n(x,\theta)-g_n(y,\theta)} & \leq \sum_{n>\frac{k}{2}}\abs{g_n(x,\theta)}+\sum_{n>\frac{k}{2}}\abs{g_n(y,\theta)} \\
& \lesssim \sum_{n>\frac{k}{2}}v_\alpha^X(\varphi) 2^{-n\alpha}  \lesssim v_\alpha^X(\varphi)2^{- \frac{k}{2} \, \alpha} \, .
\end{align*}

In order to estimate the sum
$
\sum_{1 \le n \le \frac{k}{2}} \abs{g_n(x,\theta)-g_n(y,\theta)}
$, note that 
since $k \ge 2n$, we have 
$$d(\sigma^{-n}x,\sigma^{-n}y)\leq 2^{-(k-n)} \quad \text{and so} \quad d (f^{-n}(x,\theta),f^{-n}(y,\theta))\leq 2^{-(k-n)}. $$

Then for $\varphi \in \Hscr_\alpha (X \times \T^d)$,
$$
\abs{\varphi \circ f^{-n}(x,\theta)-\varphi \circ f^{-n}(y,\theta)}\leq v_\alpha^X(\varphi)2^{-(k-n)\alpha} \, .
$$
The same estimate clearly also holds with $P(x,\theta), P(y,\theta)$ instead of $(x,\theta), (y,\theta)$, and we conclude that
$$
\abs{g_n(x,\theta)-g_n(y,\theta)}\lesssim v_\alpha^X(\varphi) 2^{-(k-n)\alpha} .
$$
Then
\begin{align*}
\sum_{1 \le n \le \frac{k}{2}} \abs{g_n(x,\theta)-g_n(y,\theta)} & \lesssim v_\alpha^X(\varphi) \sum_{1 \le n \le \frac{k}{2}}  2^{-(k-n)\alpha} \\
& \lesssim v_\alpha^X(\varphi)\sum_{ j \ge \frac{k}{2}} 2^{-j \alpha}  \lesssim v_\alpha^X(\varphi)2^{- \frac{k}{2} \, \alpha} \, .
\end{align*}

Combining the two estimates, we have
$$
\abs{\eta_\varphi(x,\theta)-\eta_\varphi(y,\theta)}\lesssim v_\alpha^X(\varphi) 2^{- k \, \frac{\alpha}{2}}
$$
for every $\theta\in \T^d$ and every $k\in \N$, which further implies, with $\beta = \frac{\alpha}{2}$, 
$$
v_\beta^X(\eta_\varphi)\lesssim v_\alpha^X(\varphi)\leq \norm{\varphi}_\alpha.
$$

Following the same strategy, we also get that $v_\beta^{\T^d}(\eta_\varphi)\lesssim \norm{\varphi}_\alpha$ (with any $\beta < \alpha$). More precisely, fix any $x\in X$ and for any given $\theta,\theta'\in \T^d$, let $N\in \N$ be such that $\abs{\theta-\theta'}\asymp 2^{-N}$, so $N\asymp \log \frac{1}{\abs{\theta-\theta'}}$.
Like before, 
$$
\sum_{n\geq N}\abs{g_n(x,\theta)}\leq \sum_{n\geq N} v_\alpha^X(\varphi) 2^{-n\alpha}\lesssim v_\alpha^X(\varphi) 2^{-N\alpha}\lesssim v_\alpha^X(\varphi)\abs{\theta-\theta'}^\alpha.
$$
The same estimate holds for $(x,\theta')$. Therefore,
$$
\sum_{n\geq N}\abs{g_n(x,\theta)-g_n(x,\theta')}\lesssim v_\alpha^X(\varphi)\abs{\theta-\theta'}^\alpha\leq \norm{\varphi}_\alpha \abs{\theta-\theta'}^\alpha.
$$

On the other hand, 
$$
\abs{\varphi \circ f^{-n}(x,\theta)-\varphi \circ f^{-n}(x,\theta')}\leq v_\alpha^{\T^d}(\varphi) \abs{\theta-\theta'}^\alpha
$$
and the same estimate also holds for $P(x,\theta), P(x,\theta')$. Then $\forall n\in\N$,
$$
\abs{g_n(x,\theta)-g_n(x,\theta')}\lesssim v_\alpha^{\T^d}(\varphi) \abs{\theta-\theta'}^\alpha ,
$$
which implies
\begin{align*}
\sum_{0\leq n< N}\abs{g_n(x,\theta)-g_n(x,\theta')} & 
\lesssim v_\alpha^{\T^d}(\varphi) \abs{\theta-\theta'}^\alpha \, N \\
&  \kern-3em
\lesssim v_\alpha^{\T^d}(\varphi) \abs{\theta-\theta'}^\alpha \log \frac{1}{\abs{\theta-\theta'}} 
 \le \norm{\varphi}_\alpha \abs{\theta-\theta'}^{\beta} \, .
\end{align*}

Then
$$
\abs{\eta_\varphi(x,\theta)-\eta_\varphi(x,\theta')} \lesssim  \norm{\varphi}_\alpha \abs{\theta-\theta'}^\beta ,
$$
showing that $v_\beta^{\T^d}(\eta_\varphi)\lesssim \norm{\varphi}_\alpha$ and completing the proof of Proposition~\ref{rtpast}.
\end{proof}

\medskip

\subsection*{Acknowledgments}

All three authors were supported by FCT-Funda\c{c}\~{a}o para a Ci\^{e}ncia e a Tecnologia through the project  PTDC / MAT-PUR / 29126 / 2017.
The first author was also supported by a FAPERJ postdoctoral grant. The third author was also supported by the CNPq research grant 313777/2020-9 and  by the Coordena\c{c}\~ao de Aperfei\c{c}oamento de Pessoal de N\'ivel Superior - Brasil (CAPES) - Finance Code 001.

\bigskip

\bibliographystyle{amsplain} 
\bibliography{references}

\end{document}